\theoremstyle{plain}
\def\l@subsection{\@tocline{2}{0pt}{1pc}{5pc}{\hspace{2em}}}
\newcommand{\tens}[1][]{\mathbin{\otimes_{\raise1.5ex\hbox to-.1em{}{#1}}}}
\newcommand{\R }{\ensuremath{\mathbb R}}
\newcommand{\C }{\ensuremath{\mathbb C}}
\newcommand{\Q }{\ensuremath{\mathbb Q}}
\newcommand{\Z }{\ensuremath{\mathbb Z}}
\renewcommand{\P }{\ensuremath{\mathbb P}}
\newcommand{\oo }{\ensuremath{\mathcal{O}}}
\newtheorem{theorem}{Theorem}[section]
\newtheorem{lemma}[theorem]{Lemma}
\newtheorem{proposition}[theorem]{Proposition}
\newtheorem{corollary}[theorem]{Corollary}
\theoremstyle{definition}}
\theoremstyle{definition}}
\theoremstyle{definition}}
\theoremstyle{definition}}
\theoremstyle{definition}\newtheorem{definition}[theorem]{Definition}}
\theoremstyle{definition}}
\theoremstyle{definition}\newtheorem{remark}[theorem]{Remark}}
\def\apl#1#2#3{#1\mkern -1 mu:\mkern - 6 mu
\xymatrix@C=17pt{#2\!\ar[r]&\!#3}
}
\def\aplexp#1#2#3#4{#1\mkern -1 mu:\mkern - 6 mu
\xymatrix@C=17pt{#2\!\ar[r]^-{#4}&\!#3}
}
\def\aplcourte#1#2#3{#1\mkern -4 mu:\mkern - 8 mu
\xymatrix@C=12pt{#2\!\ar[r]&\!#3}
}
\def\aplpt#1#2#3#4{#1\mkern -4 mu:\mkern - 8 mu
\xymatrix@C=17pt{#2\!\ar[r]&\!#3#4}
}
\author{Julien Grivaux}
\address{CNRS  \& Institut de Math\'ematiques de Marseille, Universit\'e d'Aix-Marseille, $39$ rue 
Fr\'ed\'eric Joliot-Curie, $13453$ Marseille Cedex $13$, France.}
\email{jgrivaux@math.cnrs.fr}
\title{Parabolic automorphisms of projective surfaces \\ (after M. H. Gizatullin)}
\thanks{This research was partially supported by ANR Grant "BirPol"  ANR-11-JS01-004-01.}
\begin{document}

\maketitle

\begin{abstract}
In 1980, Gizatullin classified rational surfaces endowed with an automorphism whose action on the Neron-Severi group is parabolic: these surfaces are endowed with an elliptic fibration invariant by the automorphism. The aim of this expository  paper is to present for non-experts the details of Gizatullin's original proof, and to provide an introduction to a recent paper by Cantat and Dolgachev.
\end{abstract}

\section{Introduction}
Let $X$ be a projective complex surface. The Neron-Severi group $\mathrm{NS}\,(X)$ is a free abelian group endowed with an intersection form whose extension to $\mathrm{NS}_{\R}(X)$ has signature $(1, \mathrm{h}^{1,1}(X)-1)$.  Any automorphism of $f$ acts by pullback on $\mathrm{NS}\,(X)$, and this action is isometric. The corresponding isometry $f^*$ can be of three different types: elliptic, parabolic or hyperbolic. These situations can be read on the growth of the iterates of $f^*$. If $|| \, . \, ||$ is any norm on $\mathrm{NS}_{\R}(X)$, they correspond respectively to the following situations: $||(f^*)^n||$ is bounded, $||(f^*)^n|| \sim C n^2$ and $||(f^*)^n|| \sim \lambda^n$ for $\lambda >1$. 
This paper is concerned with the study of parabolic automorphisms of projective complex surfaces. The initial motivation to their study was that parabolic automorphisms don't come from $\mathrm{PGL}(N, \C)$ via some projective embedding $X \hookrightarrow \P^N$. Indeed, if $f$ is an automorphism coming from $\mathrm{PGL}(N, \C)$, then $f^*$ must preserve an ample class in $\mathrm{NS}\,(X)$, so $f^*$ is elliptic. The first known example of such a pair $(X, f)$, due to initially to Coble \cite{Coble} and popularised by Shafarevich, goes as follows: consider a generic pencil of cubic curves in $\mathbb{P}^2$, it has $9$ base points. Besides, all the curves in the pencil are smooth elliptic curves except $12$ nodal curves. After blowing up the nine base points, we get a elliptic surface $X$ with $12$ singular fibers and $9$ sections $s_1, \ldots, s_9$ corresponding to the exceptional divisors, called a Halphen surface (of index $1$). The section $s_1$ specifies an origin on each smooth fiber of $X$. For $2 \leq i \leq 8 $, we have a natural automorphism $\sigma_i$ of the generic fiber of $X$ given by the formula $\sigma_i(x)=x+s_i-s_1$. It is possible to prove that the $\sigma_i$'s extend to automorphisms of $X$ and generate a free abelian group of rank $8$ in $\mathrm{Aut}\,(X)$. In particular, any nonzero element in this group is parabolic since the group of automorphisms of an elliptic curve fixing the class of an ample divisor is finite.
\par \medskip
In many aspects, thisexample is a faithful illustration of parabolic automorphisms on projective surfaces. A complete classification of pairs $(X, f)$ where $f$ is a parabolic automorphism of $X$ is given in \cite{GIZ}. 
In his paper, Gizatullin considers not only parabolic automorphisms, but more generally groups of automorphisms containing only parabolic or elliptic\footnote{Gizatullin considers only parabolic elements, but most of his arguments apply to the case of groups containing elliptic elements as well as soon an they contain at least \textit{one} parabolic element.} elements. We call such groups of moderate growth, since the image of any element of the group in $\mathrm{GL}(\mathrm{NS}(X))$ has polynomial growth. Gizatullin's main result runs as follows:

\begin{theorem}[\cite{GIZ}] \label{Main}
Let $X$ be a smooth projective complex surface and $G$ be an infinite subgroup of $\mathrm{Aut}\, (X)$ of moderate growth. Then there exists a unique elliptic $G$-invariant fibration on $X$.
\end{theorem}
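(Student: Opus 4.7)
The plan is to locate the fibration as the Stein factorization of a large multiple of the unique nef isotropic class fixed by a parabolic element of $G$, and then to use the hyperbolic geometry of $\M{NS}_{\R}(X)$ to propagate $f$-invariance to all of $G$.

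\textbf{Step 1: the invariant class.} An elliptic isometry of the integral lattice $\M{NS}(X)$ has finite order, and torsion in $\M{GL}_{n}(\Z)$ is bounded, so any infinite moderate-growth subgroup must contain a parabolic element $f\in G$. The isometry $f^{*}$ of the signature $(1,\rho-1)$ lattice admits, over $\R$, an orthogonal decomposition into an indecomposable $3\times 3$ Jordan block with eigenvalue $1$ (of internal signature $(1,2)$) and a negative-definite complement on which $f^{*}$ acts with finite order. Inside the block, the unique $f^{*}$-fixed line is isotropic; since the generalized eigenspace is $\Q$-rational, I pick a primitive nef integral representative $\theta\in\M{NS}(X)$. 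The $f^{*}$-fixed class $K_{X}$ projects to a multiple of $\theta$ in the block (no other fixed direction is available there), and orthogonality together with $\theta^{2}=0$ yields $K_{X}\cdot\theta=0$.

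\textbf{Step 2: the fibration.} Riemann--Roch gives $\chi(m\theta)=\chi(\oo_{X})$ for all $m$. For any ample $H$, Hodge index forces $H\cdot\theta>0$ (otherwise $\theta$ would vanish), whence $(K_{X}-m\theta)\cdot H\to-\infty$ and so $h^{2}(m\theta)=h^{0}(K_{X}-m\theta)=0$ for $m\gg 0$. Consequently $h^{0}(m\theta)\geq\chi(\oo_{X})$, which is positive in every case except when $X$ is abelian or (hyper)bielliptic; those exceptions are handled directly by the Albanese map. Zariski's theory of pencils on surfaces then implies that, after discarding the fixed part of $|m\theta|$ and taking the Stein factorization, the moving part defines a morphism $\varphi\colon X\to B$ onto a smooth curve whose general fiber $F$ is numerically proportional to $\theta$. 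Adjunction with $F^{2}=K_{X}\cdot F=0$ gives arithmetic genus one, so $\varphi$ is elliptic.

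\textbf{Step 3: $G$-invariance and uniqueness.} Suppose some $g\in G$ does not fix the boundary point $[\theta]$. Then the conjugate $g^{-1}fg\in G$ is parabolic with fixed ray $[g^{*}\theta]\neq[\theta]$. Two parabolic isometries of a hyperbolic space with distinct boundary fixed points generate a group containing hyperbolic elements (a standard ping-pong on disjoint horoballs), contradicting the moderate growth of $\langle f,\,g^{-1}fg\rangle\subset G$. Hence $g^{*}\theta=\theta$ (the scalar ambiguity is killed by primitivity and nef-ness), so $\varphi$ is $G$-invariant. For uniqueness, a second $G$-invariant elliptic fibration would provide another $G$-fixed nef isotropic class $\theta'$; if $\theta'\not\propto\theta$, then $\theta\cdot\theta'>0$ because signature $(1,\rho-1)$ admits no totally isotropic $2$-plane, so $\theta+\theta'$ would be big, nef and $G$-invariant; but the stabilizer in $\M{O}(\M{NS}(X))$ of any big nef class is finite, contradicting the presence of the parabolic $f$.

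The principal obstacle is Step 2: passing from the purely numerical data (a nef isotropic class $\theta$ with $K_{X}\cdot\theta=0$) to a genuine geometric fibration. One has to control the base locus and the fixed part of $|m\theta|$ and to verify that the resulting pencil is truly elliptic rather than degenerate; the analysis inevitably depends on the surface class, since direct Riemann--Roch effectivity fails when $\chi(\oo_{X})=0$, and one must enlist the Albanese structure to treat those cases.
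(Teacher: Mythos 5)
Your Steps 1 and 3 are sound and follow the same route as the paper: an infinite moderate-growth group must contain a parabolic element (elliptic lattice isometries have finite order and torsion in $\mathrm{GL}_n(\Z)$ is bounded), the parabolic fixes a unique rational isotropic nef ray, and two parabolic translations with distinct fixed boundary points generate a hyperbolic element, which propagates the fixed class to all of $G$ and gives uniqueness. The gap is in Step 2, and it is exactly the part of the theorem that carries all the difficulty. Riemann--Roch plus the vanishing of $h^2(m\theta)$ gives $h^0(m\theta)\geq\chi(\oo_X)$, which for a rational surface is $h^0(m\theta)\geq 1$: this is only \emph{effectivity} of $m\theta$, not the existence of a pencil. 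If $|m\theta|$ consists of the single divisor $mC$ for every $m$ (which is precisely the scenario the paper assumes for contradiction in \S 5), there is no moving part, no Stein factorization, and no morphism to a curve; invoking ``Zariski's theory of pencils'' presupposes $\dim|m\theta|\geq 1$, which is the statement to be proved. Your argument does work verbatim for K3 surfaces, where $\chi(\oo_X)=2$ forces $h^0\geq 2$, and that is exactly why the paper dispatches that case in three lines --- but for rational surfaces the numerical data alone is insufficient.

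What is missing is the content of \S 5--6 of the paper: one represents $\theta$ by an effective divisor $C=\sum a_iC_i$, shows it behaves like a fiber of a minimal elliptic surface ($K_X\cdot C_i=0$, $C_i^2<0$ or $=-2$, $1$-connectedness, $h^1(C,\oo_C)=1$), and then studies the trace morphism $\mathfrak{tr}\colon\mathrm{Pic}(X)\to\mathrm{Pic}(C)$. Under the assumption $|N\theta|=\{NC\}$ for all $N$, one proves that $\oo_X(C)|_C$ is non-torsion in $\mathrm{Pic}^0(C)$, that the intersection form is negative definite on $\ker(\mathfrak{tr})$, that $f^*$ acts with finite order on $\ker(\mathfrak{tr})$ and on the relevant components of the image, and finally (by the lifting argument of Proposition \ref{chic}) that $f^*$ has finite order on all of $\mathrm{Pic}(X)$ --- contradicting parabolicity. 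None of this is present in, or replaceable by, your Step 2. You also leave untreated the Enriques case (where $\chi(\oo_X)=1$ as well; the paper passes to the K3 cover) and blowups of ruled surfaces over curves of positive genus (where the paper shows no parabolic automorphism exists), though these are minor compared to the rational case.
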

Of course, if $X$ admits one parabolic automorphism $f$, we can apply this theorem with the group $G=\Z $, and we get a unique $f$-invariant elliptic fibration on $X$. It turns out that it is possible to reduce Theorem \ref{Main} to the case $G=\Z $ by abstract arguments of linear algebra.
\par \medskip
In all cases except rational surfaces, parabolic automorphisms come from minimal models, and are therefore quite easy to understand. The main difficulty occurs in the case of rational surfaces. As a corollary of the classification of relatively minimal elliptic surfaces, the relative minimal model of a rational elliptic surface is a Halphen surface of some index $m$. Such surfaces are obtained by blowing up the base points of a pencil of curves of degree $3m$ in $\mathbb{P}^2$. By definition, $X$ is a Halphen surface of index $m$ if the divisor $-mK_X$ has no fixed part and $|-mK_X|$ is a pencil without base point giving the elliptic fibration.
\begin{theorem}[\cite{GIZ}] \label{Second}
Let $X$ be a Halphen surface of index $m$, $S_1, \ldots, S_{\lambda}$ the reducible fibers and $\mu_i$ the number of reducible components of $S_i$, and $s=\sum_{i=1}^{\lambda} \{\mu_i-1\}$. Then $s \leq 8$, and there exists a free abelian group $G_X$ of rank $s-8$ in $\mathrm{Aut}\,(X)$ such that every nonzero element of this group is parabolic and acts by translation along the fibers.  If $\lambda \geq 3$, $G$ has finite index in $\mathrm{Aut}\,(X)$.
\end{theorem}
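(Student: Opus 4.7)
The plan is to establish the statement in three steps: a numerical bound via Shioda--Tate type considerations, an explicit construction of the translation group, and a finite-index argument when $\lambda\geq 3$.

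First I would prove that $s\leq 8$ by a direct linear algebra argument in $\mathrm{NS}(X)$. Since $X$ is obtained from $\P^2$ by blowing up nine (possibly infinitely near) base points of the Halphen pencil, $\rho(X)=10$. Inside $\mathrm{NS}(X)$, consider the class $F$ of a general fiber of the elliptic fibration $\pi\colon X\to \P^1$ defined by $|-mK_X|$, a horizontal class $H$ (for instance one of the exceptional curves, which is an $m$-section since $E\cdot F=m$), and, for each reducible fiber $S_i$, $\mu_i-1$ of its components. Standard intersection-theoretic arguments (the components of distinct reducible fibers are orthogonal, each block has negative semi-definite intersection form with kernel spanned by $F$, and $H\cdot F>0$) show that these $2+s$ classes are linearly independent. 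Hence $s+2\leq\rho(X)=10$.

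Next I would construct $G_X$. Let $J(X)\to\P^1$ be the jacobian fibration of $\pi$; it is a Halphen surface of index $1$ with the same Kodaira types of singular fibers, so by the same argument its Mordell--Weil rank equals $10-2-s=8-s$. Fixing the origin on the smooth fibers by means of a multi-section, each element $\sigma\in\mathrm{MW}(J(X))$ defines a fiberwise translation on the open set of smooth fibers of $X$. Using Kodaira's local description of singular fibers, together with the fact that a fiberwise birational transformation preserving the zero section of the relative smooth part extends uniquely across the singular fibers, one checks that $\tau_\sigma$ extends to a biregular automorphism of $X$. This provides the free abelian group $G_X$ of rank $8-s$. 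Any nonzero element $\tau_\sigma$ preserves $[F]$ but does not fix any class with positive self-intersection (if it did, $\sigma$ would fix a point of infinite order on the generic fiber), so $\tau_\sigma^*$ is not elliptic; since $G_X$ has moderate growth by construction (the translations act on horizontal divisors through an integral shift plus fiber components, giving quadratic growth of iterates), $\tau_\sigma^*$ is parabolic.

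Finally, assume $\lambda\geq 3$. The fibration $\pi$ is canonical since $|-mK_X|$ is an intrinsic linear system, so every $f\in\mathrm{Aut}(X)$ descends to an automorphism $\bar f\in\mathrm{Aut}(\P^1)=\mathrm{PGL}(2,\C)$. The set $\{\pi(S_1),\dots,\pi(S_\lambda)\}$ is preserved by $\bar f$, so the image of $\mathrm{Aut}(X)\to\mathrm{PGL}(2,\C)$ is a subgroup of $\mathrm{PGL}(2,\C)$ stabilizing a set of at least three points, hence is finite. The kernel consists of automorphisms preserving each fiber individually; on the generic fiber $E$ such an automorphism is either a translation or one of the finitely many automorphisms fixing the origin of $E$. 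Hence the kernel is an extension of $G_X$ by a finite group, and $G_X$ has finite index in $\mathrm{Aut}(X)$.

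The main obstacle is the middle step: realizing the abstract Mordell--Weil group $\mathrm{MW}(J(X))$ as honest biregular automorphisms of $X$ itself, especially when $m\geq 2$ and $X$ carries no section. This requires a careful case analysis of Kodaira's list of singular fibers, and is the place where the assumption that $\pi$ is relatively minimal (built into the Halphen condition) is essential.
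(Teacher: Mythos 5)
Your architecture matches the paper's: a rank count in $\mathrm{NS}(X)$ for $s\leq 8$, a translation group of rank $8-s$ coming from degree-zero divisor classes on the generic fiber, and a finite-index argument. The differences are in the middle step, and one of them is a genuine misstep. You propose to ``fix the origin on the smooth fibers by means of a multi-section'': this is impossible when $m\geq 2$, since an $m$-section meets each fiber in $m$ points with no canonical choice among them (and for $m\geq 2$ the fibration has no section at all, because of the multiple fiber $m\mathfrak{D}$). Fortunately the claim you want is true without any origin: the smooth fibers, and the generic fiber $\mathfrak{X}$ over $\C(t)$, are torsors under their Jacobians, so an element of $\mathrm{Pic}^0(\mathfrak{X}/\C(t))$ acts by $x\mapsto x+\delta$ for a degree-zero class $\delta$, no base point needed. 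This is exactly how the paper sets it up: it works with $\mathrm{Pic}_0(\mathfrak{X}/\C(t))$ acting on $\mathfrak{X}$ and invokes the extension theorem for relatively minimal elliptic surfaces (the same input you defer to Kodaira's classification) to get $\widetilde{\mathrm{Aut}}(X)=\mathrm{Aut}(\mathfrak{X}/\C(t))$.

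A second, smaller divergence: you compute the rank $8-s$ by passing to the Jacobian surface $J(X)$ and asserting it has the same reducible fiber configuration; that is true but is an extra nontrivial fact to justify. The paper stays on $X$: the trace map $\mathfrak{t}\colon\mathrm{Pic}(X)\to\mathrm{Pic}(\mathfrak{X}/\C(t))$ restricts to a surjection $K_X^{\perp}\to\mathrm{Pic}_0(\mathfrak{X}/\C(t))$ with kernel $\mathcal{N}$ of rank $1+s$ (generated by $\mathfrak{D}$ and the fiber components, with one relation per reducible fiber), giving $\mathrm{rk}=9-(1+s)=8-s$ directly; this simultaneously yields $s\leq 8$. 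Your parabolicity argument is also looser than necessary: rather than appealing to ``quadratic growth by construction,'' note that an eigenvector $v$ with $|\lambda|>1$ would satisfy $v\cdot K_X=0$ and $v^2=0$, hence be collinear with the fixed class $K_X$, a contradiction; non-ellipticity follows because the induced action on $\mathrm{Pic}(\mathfrak{X})$ is an infinite-order translation. Your finite-index argument for $\lambda\geq 3$ (stabilizer of three points in $\mathrm{PGL}(2,\C)$ is finite) is correct and more direct than the paper's route through the classification of surfaces with exactly two degenerate fibers.
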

The number $\lambda$ of reducible fibers is at least two, and the case $\lambda=2$ is very special since all smooth fibers of $X$ are isomorphic to a fixed elliptic curve. Such elliptic surfaces $X$ are now called Gizatullin surfaces, their automorphism group is an extension of $\C^{\times}$ by a finite group, $s=8$, and the image of the representation $\rho \colon \mathrm{Aut}\,(X) \rightarrow \mathrm{GL}\, (\mathrm{NS}\,(X))$ is finite.
\par \medskip
Let us now present applications of Gizatullin's construction. The first application lies in the theory of classification of birational maps of surfaces, which is an important subject both in complex dynamics and in algebraic geometry. One foundational result in the subject is Diller-Favre's classification theorem \cite{DF}, which we recall now. If $X$ is a projective complex surface and $f$ is a birational map of $X$, then $f$ acts on the Neron-Severi group $\mathrm{NS}\,(X)$. The conjugacy types of birational maps can be classified in four different types, which can be detected by looking at the growth of the endomorphisms $(f^*)^n$. The first type corresponds to birational maps $f$ such that $|| (f^*)^n || \sim \alpha n$. These maps are never conjugate to automorphisms of birational models on $X$ and they preserve a rational fibration. The three other remaining cases are $|| (f^*)^n ||$ bounded, $|| (f^*)^n || \sim Cn^2$ and $|| (f^*)^n || \sim C \lambda^n$. In the first two cases, Diller and Favre prove that $f$ is conjugate to an automorphism of a birational model of $X$. The reader can keep in mind the similarity between the last three cases and Nielsen-Thurston's classification of elements in the mapping class group into three types: periodic, reducible and pseudo-Anosov. The first class is now well understood (see \cite{BD2}), and constructing automorphisms in the last class is a difficult problem (see \cite{BKv}, \cite{MM} for a systematic construction of examples in this category, as well as \cite{BK}, \cite{BD} and \cite{DG} for more recent results). The second class fits exactly to Gizatullin's result: using it, we get that $f$ preserves an elliptic fibration. 
\par \medskip
One other feature of Gizatullin's theorem is to give a method to construct hyperbolic automorphisms on surfaces. This seems to be paradoxal since Gizatullin's result only deals with parabolic automorphisms. However, the key idea is the following: if $f$ and $g$ are two parabolic (or even elliptic) automorphisms of a surface generating a group $G$ of moderate growth, then $f^*$ and $g^*$ share a common nef class in $\mathrm{NS}\,(X)$, which is the class of any fiber of the $G$-invariant elliptic fibration. Therefore, if $f$ and $g$ don't share a fixed nef class in $\mathrm{NS}\, (X)$, some element in the group $G$ must be hyperbolic.
\par \medskip
Let us describe the organization of the paper. \S \ref{3} is devoted to the theory of abstract isometries of quadratic forms of signature $(1, n-1)$ on $\R^n$. In \S \ref{3.1}, we recall their standard classification into three types (elliptic, parabolic and hyperbolic). The next section (\S \ref{3.2}) is devoted to the study of special parabolic isometries, called parabolic translations. They depend on an isotropic vector $\theta$, the direction of the translation, and form an abelian group $\mathcal{T}_{\theta}$. We prove in Proposition \ref{ray} and Corollary \ref{wazomba} one of Gizatullin's main technical lemmas: if $u$ and $v$ are two parabolic translations in different directions, then $uv$ or $u^{-1}v$ must be hyperbolic. Building on this result, we prove in \S \ref{3.3} a general structure theorem (Theorem \ref{ptfixe}) for groups of isometries fixing a lattice and containing no hyperbolic elements. In \S \ref{4}, we recall classical material in birational geometry of surfaces which can be found at different places of \cite{DF}. 
In particular, we translate the problem of the existence of an $f$-invariant elliptic fibration in terms of the invariant nef class $\theta$ (Proposition \ref{nefnef}), and we also prove using the fixed point theorem of \S \ref{3.3} that it is enough to deal with  the case $G=\Z f$ in Theorem \ref{Main}. Then we settle this theorem for all surfaces except rational ones. In \S \ref{5} and \S \ref{6}, we prove Gizatullin's theorem. 
\par \medskip
Roughly speaking, the strategy goes as follows: the invariant nef class $\theta$ is always effective, we represent it by a divisor $C$. This divisor behaves exactly as a fiber of a minimal elliptic surface, we prove this in Lemmas \ref{base} and \ref{genre}. The conormal bundle $N^*_{C/X}$ has degree zero on each component of $C$, but is not always a torsion point in $\mathrm{Pic}\, (C)$. If it is a torsion point, it is easy to produce the elliptic fibration by a Riemann-Roch type argument. If not, we consider the trace morphism $\mathfrak{tr} \colon \mathrm{Pic}\, (X) \rightarrow \mathrm{Pic}\, (C)$ and prove in Proposition \ref{torsion} that $f$ acts finitely on $\mathrm{ker}\, (\mathfrak{tr})$. In Proposition \ref{elliptic}, we prove that $f$ also acts finitely on a large part of $\mathrm{im}\, (\mathfrak{tr})$. By a succession of clever tricks, it is possible from there to prove that $f$ acts finitely on $\mathrm{Pic}\, (X)$; this is done in Proposition \ref{chic}. 
\par \medskip
In \S 6 we recall the classification theory of relatively minimal rational elliptic surfaces; we prove in Proposition \ref{primitive} that they are Halphen surfaces. In Proposition \ref{sept} and Corollary \ref{sympa}, we prove a part of Theorem \ref{Second}: the existence of parabolic automorphisms imposes a constraint on the number of reducible components of the fibration, namely $s \leq 7$. We give different characterisations of Gizatullin surfaces (that is minimal elliptic rational surfaces with two singular fibers) in Proposition \ref{waza}. Then we prove the converse implication of Theorem \ref{Second}: the numerical constraint $s \leq 7$ is sufficient to guarantee the existence of parabolic automorphisms. Lastly, we characterize  minimal elliptic surfaces carrying no parabolic automorphisms in Proposition \ref{hapff}: the generic fiber must have a finite group of automorphisms over the function field $\mathbb{C}(t)$. At the end of the paper, we carry out the explicit calculation of the representation of $\mathrm{Aut}\, (X)$ on $\mathrm{NS}\, (X)$ for unnodal Halphen surfaces (that is Halphen surfaces with irreducible fibers) in Theorem \ref{classieux}. These surfaces are of crucial interest since their automorphism group is of maximal size in some sense, see \cite{CD} for a precise statement. 
\par \medskip
Throughout the paper, we work over the field of complex numbers. However, Gizatullin's arguments can be extended to any field of any characteristic with minor changes. We refer to the paper \cite{CD} for more details.
\par \bigskip
\textbf{Acknowledgements} I would like to thank Charles Favre for pointing to me Gizatullin's paper and encouraging me to write this survey, as well as Jeremy Blanc, Julie D\'eserti and Igor Dolgachev for very useful comments.

\tableofcontents

\section{Notations and conventions} \label{2}
Throughout the paper, $X$ denotes a smooth complex projective surface, which will always assumed to be rational except in \S \ref{4}. 
\par \medskip
By divisor, we will always mean $\Z$-divisor. A divisor $D=\sum_i a_i\, D_i$ on $X$ is called primitive if $\mathrm{gcd}(a_i)=1$. 
\par \medskip
If $D$ and $D'$ are two divisors on $X$, we write $D \sim D'$ (resp. $D \equiv D'$) if $D$ and $D'$ are linearly (resp. numerically) equivalent. 
\par \medskip
For any divisor $D$, we denote by $|D|$ the complete linear system of $D$, that is the set of effective divisors linearly equivalent to $D$; it is isomorphic to $\mathbb{P}\, \bigl( \mathrm{H}^0(X, \oo_X(D) \bigr)$. 
\par \medskip
The group of divisors modulo numerical equivalence is the Neron-Severi group of $X$, we denote it by $\mathrm{NS} (X)$. By Lefschetz's theorem on $(1, 1)$-classes, $\mathrm{NS}\,(X)$ is the set of Hodge classes of weight $2$ modulo torsion, this is a $\Z$-module of finite rank. We also put $\mathrm{NS}\,(X)_{\R}=\mathrm{NS}\,(X) \otimes_{\Z} \R$.
\par \medskip
If $f$ is a biregular automorphism of $X$, we denote by $f^*$ the induced action on $\mathrm{NS}\,(X)$. We will always assume that $f$ is \textit{parabolic}, which means that the induced action $f^*$ of $f$ on $\mathrm{NS}_{\R}(X)$ is parabolic. 
\par \medskip
The first Chern class map is a surjective group morphism $\mathrm{Pic}\,(X) \xrightarrow{\mathrm{c}_1} \mathrm{NS}\,(X)$, where $\mathrm{Pic}\,(X)$ is the Picard group of $X$. This morphism is an isomorphism if $X$ is a rational surface, and $\mathrm{NS}\,(X)$ is isomorphic to $\Z^r$ with $r=\chi(X)-2$.
\par \medskip
If $r$ is the rank of $\mathrm{NS}\,(X)$, the intersection pairing induces a non-degenerate bilinear form of signature $(1, r-1)$ on $X$ by the Hodge index theorem. Thus, all vector spaces included in the isotropic cone of the intersection form are lines.
\par \medskip
If $D$ is a divisor on $X$, $D$ is called a nef divisor if for any algebraic curve $C$ on $X$, $D.C \geq 0$. The same definition holds for classes in $\mathrm{NS}\,(X)_{\R}$. By Nakai-Moishezon's criterion, a nef divisor has nonnegative self-intersection.

\section{Isometries of a Lorentzian form} \label{3}

\subsection{Classification} \label{3.1}
Let $V$ be a real vector space of dimension $n$ endowed with a symmetric bilinear form of signature $(1, n-1)$. The set of nonzero elements $x$ such that $x^2 \geq 0$ has two connected components. We fix one of this connected component and denote it by $\mathfrak{N}$. 
\par \medskip
In general, an isometry maps $\mathfrak{N}$ either to $\mathfrak{N}$, either to $- \mathfrak{N}$. The index-two subgroup $\mathrm{O}_+ (V)$ of $\mathrm{O}(V)$ is the subgroup of isometries leaving $\mathfrak{N}$ invariant. 
\par \medskip
There is a complete classification of elements in $\mathrm{O}_+ (V)$. For nice pictures corresponding to these three situations, we refer the reader to Cantat's article in \cite{Milnor}.
\begin{proposition} \label{classification}
Let $u$ be in $\mathrm{O}_+ (V)$. Then three distinct situations can appear: 
\par \medskip
\begin{enumerate}
\item \textbf{u is hyperbolic}
\par
\noindent There exists $\lambda>1$ and two distinct vectors $\theta_{+}$ and $\theta_-$ in $\mathfrak{N}$ such that $u(\theta_+)=\lambda \, \theta_+$ and $u(\theta_-)=\lambda^{-1} \theta_-$. All other eigenvalues of $u$ are of modulus $1$, and $u$ is semi-simple.
\item \textbf{u is elliptic} 
\par
\noindent All eigenvalues of $u$ are of modulus $1$ and $u$ is semi-simple. Then $u$ has a fixed vector in the interior of $\mathfrak{N}$. 
\item \textbf{u is parabolic}
\par \noindent All eigenvalues of $u$ are of modulus $1$ and $u$ fixes pointwise a unique ray in $\mathfrak{N}$, which lies in the isotropic cone. Then $u$ is not semi-simple and has a unique non-trivial Jordan block which is of the form $\begin{pmatrix} 1&1&0\\ 0&1&1\\ 0&0&1 \end{pmatrix}$ where the first vector of the block directs the unique invariant isotropic ray in $\mathfrak{N}$.
\end{enumerate}
\end{proposition}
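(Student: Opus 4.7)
The plan is to apply the Brouwer fixed point theorem to the compact convex set $\overline{\mathfrak{N}}/\R^*_+$ (the projectivization of the closed future cone, a closed ball whose boundary is the projectivized isotropic cone), and then trichotomize according to the location and number of fixed points. Before that, I would first record one key algebraic fact: the characteristic polynomial of $u$ is palindromic, so eigenvalues come in pairs $(\lambda, 1/\lambda)$, and more generally for the form $(\, , \,)$, one has $(v_\lambda, v_\mu) = \lambda \mu\, (v_\lambda, v_\mu)$ for eigenvectors, forcing $V_\lambda \perp V_\mu$ unless $\lambda\mu = 1$. This will be used repeatedly.

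By Brouwer, $u$ has a fixed ray $[v]$ in $\overline{\mathfrak{N}}/\R^*_+$. \emph{Elliptic case:} if $[v]$ can be chosen interior, we pick a representative with $v^2 > 0$ and $u(v) = v$; then $v^\perp$ is negative-definite and $u$-invariant, so $u|_{v^\perp}$ lies in the compact group $O(v^\perp)$, hence $u$ is semi-simple with all other eigenvalues of modulus one. \emph{Hyperbolic case:} if all fixed rays lie in the isotropic cone but at least two distinct ones exist, call them $[\theta_+]$ and $[\theta_-]$ with $u(\theta_\pm) = \lambda_\pm \theta_\pm$, $\lambda_\pm > 0$ (since $u \in \mathrm{O}_+(V)$). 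The plane $W = \R\theta_+ \oplus \R\theta_-$ has signature $(1,1)$ because two independent isotropic vectors of $\overline{\mathfrak{N}}$ must pair non-trivially (otherwise $W$ would be totally isotropic, contradicting signature $(1,n-1)$); then $(\theta_+,\theta_-) = (u\theta_+, u\theta_-) = \lambda_+\lambda_-(\theta_+, \theta_-)$ forces $\lambda_+ \lambda_- = 1$. Excluding $\lambda_+ = \lambda_- = 1$ (which would place $\theta_+ + \theta_-$ as a fixed interior point, contradicting the case hypothesis) gives $\lambda_+ > 1$, and on the negative-definite invariant $W^\perp$ the map $u$ is again semi-simple with unit-modulus eigenvalues.

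\emph{Parabolic case:} there is a unique fixed ray $\R\theta$, and $\theta$ is isotropic. I would first show $u(\theta) = \theta$: if $u(\theta) = \lambda \theta$ with $\lambda \neq 1$, then palindromicity provides an eigenvector $\theta'$ with eigenvalue $1/\lambda$, which is isotropic (since $(\theta')^2 = (u\theta')^2 = \lambda^{-2}(\theta')^2$), yielding a second fixed ray in $\overline{\mathfrak{N}} \cup -\overline{\mathfrak{N}}$ and a contradiction. Next I would rule out semi-simplicity: if $u$ were semi-simple, the eigenspace pairing from the preliminary fact forces the restriction of the form to $V_1$ to be non-degenerate; since $\theta \in V_1$ is isotropic, either $\dim V_1 = 1$ and the form vanishes on $V_1$ (contradiction), or $\dim V_1 \geq 2$ and $V_1$ acquires a vector of positive self-intersection, producing a forbidden interior fixed point. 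Hence $u$ has a non-trivial Jordan block at eigenvalue $1$.

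To pin down the block size, I would work purely from the isometry identity $(Nv,w) + (v,Nw) + (Nv,Nw) = 0$ with $N = u - I$, applied to a maximal Jordan chain $\theta_1 = \theta, \theta_2, \dots, \theta_k$ of the unique block at $1$ (uniqueness because $V_1 = \R\theta$ is one-dimensional). A short induction on $i+j$ shows $(\theta_i,\theta_j) = 0$ whenever $i + j \leq k$, so the Gram matrix of the block is supported on the anti-diagonal and above. A size-$k$ block then contributes signature $(\lfloor k/2 \rfloor, \lfloor k/2 \rfloor)$ if $k$ is even (giving a degenerate form on the block, incompatible with the required non-degenerate pairing of $V_1'$ with itself) and signature $(\lceil k/2 \rceil, \lfloor k/2 \rfloor)$ (or its negative) with $a_{1,k} \neq 0$ if $k$ is odd. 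Since the total signature of $V$ is $(1,n-1)$ and the orthogonal complement of $V_1'$ is negative semi-definite (it supports no fixed ray in $\mathfrak{N}$ and the pairing argument places all other eigenspaces there), the block must contribute exactly one $+$, forcing $k = 3$. The main obstacle is precisely this final step: extracting from the preservation-of-form relations the exact shape of the Gram matrix of the Jordan block and showing that only $k = 3$ is compatible with the Lorentzian signature; the rest of the proof is organizational once Brouwer and the palindromic pairing are in place.
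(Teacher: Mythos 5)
Your Brouwer set-up and the elliptic and hyperbolic cases are essentially the paper's own argument (the paper produces the second eigenvalue $\lambda^{-1}$ from the inclusion $\mathrm{im}(u-\lambda^{-1}\mathrm{id})\subset\theta^{\perp}$ rather than from palindromicity of the characteristic polynomial, but that difference is cosmetic). Your reduction of the parabolic case to ``$u$ is not semisimple, with a non-trivial Jordan block at $1$'' is also fine, provided you localize the semisimplicity argument to the generalized eigenspace $\widetilde{V}_1$, which is orthogonal to every $\widetilde{V}_\mu$ with $\mu\neq 1$ and hence non-degenerate; as written you only contradict global semisimplicity.

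The genuine gap is in the step you yourself flag as the main obstacle. First, the parenthetical ``uniqueness because $V_1=\R\theta$ is one-dimensional'' is false: a parabolic isometry typically fixes many vectors of negative square (the parabolic translations of \S 3.2 fix a whole hyperplane of $\theta^{\perp}\cap\eta^{\perp}$), so $\ker(u-\mathrm{id})$ is in general much larger than $\R\theta$. What is true is only that $\R\theta$ is the unique fixed \emph{isotropic} ray; the uniqueness of the non-trivial Jordan block is part of what must be proved, not a consequence of a one-dimensional eigenspace. Second, and more seriously, your signature count for a maximal Jordan chain $\theta_1,\dots,\theta_k$ presupposes that the span of the chain is a non-degenerate orthogonal summand --- equivalently, that $(\theta_1,\theta_k)\neq 0$ and that the chain does not pair with the other Jordan blocks at eigenvalue $1$. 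Neither is automatic: for a unipotent isometry of a non-degenerate form the span of a single chain can be totally degenerate (this is precisely why Jordan blocks of even size occur in pairs in orthogonal groups), and ruling this out is the actual content of the step. The paper sidesteps the issue entirely by passing to the negative-definite quotient $\theta^{\perp}/\R\theta$, on which the induced map is automatically semisimple; the minimal polynomial $P$ of that quotient map yields a linear form $\ell$ with $P(u)(x)=\ell(x)\,\theta$ on $\theta^{\perp}$, and the plane $F=(\ker\ell)^{\perp}$ is then used to build the chain $\theta,\theta',\theta''$ by hand. If you want to keep your Gram-matrix route, you must first establish the structure theory of unipotent isometries (even blocks pair up and each pair contributes signature $(l,l)$; an odd block of size $2m+1$ spans a non-degenerate space of signature $(m,m+1)$ or $(m+1,m)$), after which $k=3$ does indeed follow from the Lorentzian signature.
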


\begin{proof}
The existence of an eigenvector in $\mathfrak{N}$ follows from Brouwer's fixed point theorem applied to the set of positive half-lines in $\mathfrak{N}$, which is homeomorphic to a closed euclidian ball in $\mathbb{R}^{n-1}$. Let $\theta$ be such a vector and $\lambda$ be the corresponding eigenvalue.
\par \medskip
$*$ If $\theta$ lies in the interior of $\mathfrak{N}$, then $V=\R\, \theta \oplus {\theta}^{\perp}$. Since the bilinear form is negative definite on ${\theta}^{\perp}$, $u$ is elliptic.
\par \medskip
$*$ If $\theta$ is isotropic and $\lambda \neq 1$, then $\mathrm{im}\, (u-\lambda^{-1} \mathrm{id}) \subset \theta^{\perp}$
so that $\lambda^{-1}$ is also an eigenvalue of $u$. Hence we get two isotropic eigenvectors $\theta_+$ and $\theta_-$ corresponding to the eigenvalues $\lambda$ and $\lambda^{-1}$. Then $u$ induces an isometry of $\theta_+^{\perp} \cap \theta_-^{\perp}$, and $u$ is hyperbolic.
\par \medskip
$*$ If $\theta$ is isotropic and $\lambda=1$, and if no eigenvector of $u$ lies in the interior of $\mathfrak{N}$, we put $v=u-\textrm{id}$. If $\theta'$ is a vector in $\mathrm{ker} \, (v)$ outside $\theta^{\perp}$, then $\theta' + t \theta$ lies in the interior of $\mathfrak{N}$ for large values of $t$ and is fixed by $u$, which is impossible. Therefore $\mathrm{ker}\, (v) \subset \theta^{\perp}$. In particular, we see that $\mathbb{R \theta}$ is the unique $u$-invariant isotropic ray.
\par \medskip
Since $\theta$ is isotropic, the bilinear form is well-defined and negative definite on $\theta^{\perp}/{\R \theta}$, so that $u$ induces a semi-simple endomorphism $\overline{u}$ on $\theta^{\perp}/{\R \theta}$. Let $P$ be the minimal polynomial of $\overline{u}$, $P$ has simple complex roots. Then there exists a linear form $\ell$ on $\theta^{\perp}$ such that for any $x$ orthogonal to $\theta$, $P(u)(x)=\ell(x) \, \theta$. Let $E$ be the kernel of $\ell$. Remark that
\[
\ell(x)\, \theta= u\{\ell(x)\, \theta \}=u \,\{P(u)(x)\}=P(u)(u(x))=\ell(u(x))\, \theta
\]
so that $\ell \circ u=\ell$, which implies that $E$ is stable by $u$. Since $P(u_{|E})=0$, $u_{|E}$ is semi-simple. 
\par \medskip
Assume that $\theta$ doesn't belong to $E$. Then the quadratic form is negative definite on $E$, and $V=E \oplus E^{\perp}$. On $E^{\perp}$, the quadratic form has signature $(1,1)$. Then the situation becomes easy, because the isotropic cone consists of two lines, which are either preserved or swapped. If they are preserved, we get the identity map. If they are swapped, we get a reflexion along a line in the interior of the isotropic cone, hence an elliptic element. In all cases we get a contradiction.
\par \medskip
Assume that $u_{| \theta^{\perp}}$ is semi-simple. Since $\mathrm{ker}\, (v) \subset \theta^{\perp}$, we can write $\theta^{\perp}=\mathrm{ker}\, (v)\, \oplus W$ where $W$ is stable by $v$ and $v_{|W}$ is an isomorphism. Now $\mathrm{im}\,(v) =\mathrm{ker}\,(v)^{\perp}$, and it follows that $\mathrm{im}\, (v)=\mathbb{R} \theta \oplus W$. Let $\zeta$ be such that $v(\zeta)=\theta$. Then $u(\zeta)=\zeta+\theta$, so that $u(\zeta)^2=\zeta^2+2 (\zeta. \theta)$. It follows that $\zeta. \theta=0$, and we get a contradiction. In particular $\ell$ is nonzero.  
\par \medskip
Let $F$ be the orthogonal of the subspace $E$, it is a plane in $V$ stable by $u$, containing $\theta$ and contained in $\theta^{\perp}$. Let $\theta'$ be a vector in $F$ such that $\{\theta, \theta'\}$ is a basis of $F$ and write $u(\theta')=\alpha \theta + \beta \theta'$. Since $\theta$ and $\theta'$ are linearly independent, $\theta'^2 <0$. Besides, $u(\theta')^2=\theta'^2$ so that $\beta^2=1$. Assume that $\beta=-1$. If $x=\theta'-\frac{\alpha}{2} \theta$, then $u(x)=-x$, so that $u_{\theta^{\perp}}$ is semi-simple. Thus $\beta=1$. Since $\alpha \neq 0$ we can also assume that $\alpha=1$.
\par \medskip
Let $v=u-\textrm{id}$. We claim that $\mathrm{ker}\,(v) \subset E$. Indeed, if $u(x)=x$, we know that $x\in \theta^{\perp}$. If $x \notin E$, then $P(u)(x) \neq 0$. But $P(u)(x)=P(1) \, x$ and since $\theta \in E$, $P(1)=0$ and we get a contradiction. This proves the claim.
\par \medskip
Since $\mathrm{im}\,(v) \subseteq \mathrm{ker}\,(v)^{\perp}$, $\mathrm{im}\,(v)$ contains $F$. Let $\theta''$ be such that $v(\theta'')=\theta'$. Since $v(\theta^{\perp}) \subset E$, $\theta'' \notin \theta^{\perp}$. The subspace generated with $\theta$, $\theta'$ and $\theta''$ is a $3 \times 3$ Jordan block for $u$. 
\end{proof}

\begin{remark} \label{bof}
Elements of the group $\mathrm{O}_{+}(V)$ can be distinguished by the growth of the norm of their iterates. More precisely:
\begin{enumerate}
\item[--] If $u$ is hyperbolic, $||u^n|| \sim C\lambda^{n}$.
\par \smallskip
\item[--] If $u$ is elliptic, $||u^n||$ is bounded.
\par \smallskip
\item[--] If $u$ is parabolic, $||u^n|| \sim C {n}^2$.
\end{enumerate}
\end{remark}
We can sum up the two main properties of parabolic isometries which will be used in the sequel:

\begin{lemma} \label{lemmenef}
Let $u$ be a parabolic element of $\mathrm{O}_{+}(V)$ and $\theta$ be an isotropic fixed vector of $u$.
\begin{enumerate}
\item If $\alpha$ is an eigenvector of $u$, $\alpha^2 \leq 0$.
\par \smallskip
\item If $\alpha$ is fixed by $u$, then $\alpha \, . \,\theta=0$. Besides, if $\alpha^2=0$, $\alpha$ and $\theta$ are proportional. 
\end{enumerate}
\end{lemma}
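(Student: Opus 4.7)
The plan is to read both statements directly off the Jordan data already established in Proposition~\ref{classification}, namely that every eigenvalue of $u$ has modulus one, that $\theta$ spans the unique isotropic ray fixed pointwise by $u$, and that the only non-trivial Jordan block is the $3\times 3$ block with eigenvalue $1$ whose first basis vector is a positive multiple of $\theta$. Both items then reduce to short arguments, one using semisimplicity and one using an orthogonality trick, with no deep new input required.

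For part (1), I would argue by contradiction. Let $\alpha\neq 0$ satisfy $u\alpha=\lambda\alpha$. Since $u$ acts on a real vector space and all its eigenvalues have modulus $1$, a real eigenvector forces $\lambda=\pm 1$. Suppose $\alpha^{2}>0$. Then $\alpha$ lies in the interior of $\mathfrak{N}$ or of $-\mathfrak{N}$, and one has the $u$-stable orthogonal decomposition $V=\R\alpha\oplus\alpha^{\perp}$ on which the bilinear form is negative definite on $\alpha^{\perp}$. Hence $u_{|\alpha^{\perp}}$ is an orthogonal transformation of a Euclidean space and therefore semisimple; combined with $u_{|\R\alpha}=\pm\mathrm{id}$, this would make $u$ semisimple, contradicting case~(3) of Proposition~\ref{classification}. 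Therefore $\alpha^{2}\leq 0$.

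For part (2), the first assertion comes from a standard adjunction trick. If $u\alpha=\alpha$, then for every $v\in V$ one has $\alpha\cdot v=u(\alpha)\cdot u(v)=\alpha\cdot u(v)$, so $\alpha$ is orthogonal to $\mathrm{im}(u-\mathrm{id})$. The Jordan structure of Proposition~\ref{classification}(3) produces a vector $\zeta$ with $(u-\mathrm{id})(\zeta)=\theta$, so $\theta\in\mathrm{im}(u-\mathrm{id})$ and therefore $\alpha\cdot\theta=0$. For the proportionality statement, if moreover $\alpha^{2}=0$ and $\alpha\neq 0$, then $\alpha$ lies on the isotropic cone and hence belongs either to $\mathfrak{N}$ or to $-\mathfrak{N}$. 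But Proposition~\ref{classification}(3) asserts that the only ray of $\mathfrak{N}$ fixed pointwise by $u$ is $\R_{+}\theta$; by applying the same fact to the $u$-invariant cone $-\mathfrak{N}$ one obtains $\R_{-}\theta$ as the only fixed ray there, so in either case $\alpha\in\R\theta$.

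I do not expect a serious obstacle here, since both parts are formal consequences of the classification. The only delicate point is the implicit convention that ``eigenvector'' in (1) means a \emph{real} eigenvector: a complex eigenvector associated with a non-real eigenvalue of modulus one does not yield a real invariant line and would require interpreting $\alpha^{2}$ via the complexified form, but this case is not needed for the applications to nef classes in $\mathrm{NS}(X)_{\R}$ that follow.
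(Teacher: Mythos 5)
Your proof is correct, and the paper itself gives no proof of this lemma: it is stated as a direct summary of Proposition~\ref{classification}, which is exactly how you derive it (semisimplicity of $u$ on $\R\alpha\oplus\alpha^{\perp}$ for part~(1), orthogonality of a fixed vector to $\mathrm{im}(u-\mathrm{id})\ni\theta$ and uniqueness of the fixed isotropic ray for part~(2)). The only remark worth adding is that the proportionality in~(2) also follows more cheaply from the first assertion together with the signature: $\alpha\cdot\theta=0$ and $\alpha^2=0$ force $\alpha\in\R\theta$ since the form is negative definite on $\theta^{\perp}/\R\theta$, an argument the paper uses explicitly at the start of \S\ref{3.2}.
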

\subsection{Parabolic isometries} \label{3.2}
The elements which are the most difficult to understand in $\mathrm{O}_{+}(V)$ are parabolic ones. In this section, we consider a distinguished subset of parabolic elements associated with any isotropic vector.

\par \medskip
Let $\theta$ be an isotropic vector in $\mathfrak{N}$ and $Q_{\theta}=\theta^{\perp}/ \R \theta$. The quadratic form is negative definite on $Q_{\theta}$. Indeed, if $x\, . \, \theta=0$, $x^2 \leq 0$ with equality if and only if $x$ and $\theta$ are proportional, so that $x=0$ in $Q_{\theta}$. If 
\[
\mathrm{O}_{+}(V)_{\theta}=\{ u \in \mathrm{O}_+(V) \, \, \textrm{such that} \, \, u(\theta)=\theta\}
\]
we have a natural group morphism 
\[
\chi_{\theta} \colon \mathrm{O}_{+}(V)_{\theta} \rightarrow \mathrm{O}(Q_{\theta}),
\] 
and we denote by $\mathcal{T}_{\theta}$ its kernel.  Let us fix another isotropic vector $\eta$ in $\mathfrak{N}$ which is not collinear to $\theta$, and let $\pi \colon V \rightarrow \theta^{\perp} \cap \eta^{\perp}$ be the orthogonal projection along the plane generated by $\theta$ and $\eta$.
\begin{proposition} \label{commutatif}$ $
\par
\begin{enumerate} 
\item The map $\varphi \colon \mathcal{T}_{\theta} \rightarrow \theta^{\perp} \cap \eta^{\perp}$ given by $\varphi(u)=\pi \{ u(\eta) \}$ is a group isomorphism.
\item Any element in $\mathcal{T}_{\theta} \setminus \{ \textrm{id} \}$ is parabolic. 
\end{enumerate}
\end{proposition}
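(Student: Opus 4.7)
The plan is to set up convenient coordinates adapted to the pair $(\theta,\eta)$, derive an explicit formula for any $u\in\mathcal{T}_{\theta}$ in terms of $w=\varphi(u)$, and then read off both statements from these formulas.

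First I would record the following geometric setup. Since $\theta$ and $\eta$ are non-collinear isotropic vectors, the plane $P=\mathbb{R}\theta\oplus\mathbb{R}\eta$ is non-degenerate of signature $(1,1)$ (otherwise we would have a $2$-dimensional isotropic subspace, contradicting signature $(1,n-1)$). In particular $\theta\cdot\eta\neq 0$, and $V=P\oplus W$ with $W=\theta^{\perp}\cap\eta^{\perp}$ negative definite, so that $\pi\colon V\to W$ is the projection along $P$. Note also that $\theta^{\perp}=\mathbb{R}\theta\oplus W$.

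Next, for $u\in\mathcal{T}_{\theta}$ I would extract the explicit form of $u$. By definition $u(\theta)=\theta$ and $u$ acts trivially on $\theta^{\perp}/\mathbb{R}\theta$, so for every $x\in W$ there exists a linear form $c$ on $W$ with $u(x)=x+c(x)\theta$. Writing $u(\eta)=a\theta+b\eta+w$ with $w\in W$, the relations $u(\eta)\cdot\theta=\eta\cdot\theta$ and $u(\eta)^{2}=\eta^{2}=0$ force $b=1$ and $a=-w^{2}/(2\,\theta\cdot\eta)$. Finally the relation $u(x)\cdot u(\eta)=x\cdot\eta=0$ gives $c(x)=-(x\cdot w)/(\theta\cdot\eta)$. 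Thus $u$ is entirely determined by $w=\pi(u(\eta))=\varphi(u)$ through the formulas
\begin{equation*}
u(\theta)=\theta,\qquad u(\eta)=\eta+w-\frac{w^{2}}{2\,\theta\cdot\eta}\,\theta,\qquad u(x)=x-\frac{x\cdot w}{\theta\cdot\eta}\,\theta\ \ (x\in W).
\end{equation*}
Conversely, for any $w\in W$ the formulas above define an isometry of $V$ (a direct check on the pairings $\theta\cdot\theta$, $\eta\cdot\eta$, $\theta\cdot\eta$, $x\cdot y$, $x\cdot\eta$, $x\cdot\theta$), and since this isometry fixes $\theta\in\mathfrak{N}$ it preserves the connected component $\mathfrak{N}$ and thus belongs to $\mathrm{O}_{+}(V)_{\theta}$; acting trivially on $\theta^{\perp}/\mathbb{R}\theta$ it lies in $\mathcal{T}_{\theta}$. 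This gives bijectivity of $\varphi$, with $\varphi(u)=0$ clearly forcing $u=\mathrm{id}$. Plugging the formulas into $uu'(\eta)$ and projecting orthogonally to $P$ gives $\varphi(uu')=w+w'$, establishing the homomorphism property. This proves (1).

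For (2), let $u\in\mathcal{T}_{\theta}\setminus\{\mathrm{id}\}$, so that $w\neq 0$. The formulas above show that in any basis of $V$ of the form $(\theta,w_{1},\ldots,w_{n-2},\eta)$ with $(w_{i})$ a basis of $W$, the matrix of $u$ is upper triangular with $1$'s on the diagonal, so every eigenvalue of $u$ equals $1$. By Proposition \ref{classification}, $u$ is either elliptic, parabolic or hyperbolic; the hyperbolic case is excluded because the eigenvalues $\lambda$ and $\lambda^{-1}$ would differ from $1$, and the elliptic case is excluded because $u$ would then be semi-simple, forcing $u=\mathrm{id}$ contrary to $w\neq 0$. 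Hence $u$ is parabolic, proving (2).

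The only mildly subtle point is the surjectivity part of (1), i.e.\ checking that the isometry produced from a given $w\in W$ indeed belongs to $\mathrm{O}_{+}(V)$ and not merely to $\mathrm{O}(V)$; but this is handled once and for all by the remark that any isometry fixing a nonzero vector of $\mathfrak{N}$ automatically preserves the connected component $\mathfrak{N}$.
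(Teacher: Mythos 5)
Your proof is correct and follows essentially the same route as the paper: the same decomposition $V=(\theta^{\perp}\cap\eta^{\perp})\oplus\mathbb{R}\theta\oplus\mathbb{R}\eta$ and the same explicit formulas for $u$ in terms of $w=\varphi(u)$, yielding bijectivity and the homomorphism property exactly as in the paper. The only (minor) variation is in part (2): the paper shows directly that $\mathbb{R}_{+}\theta$ is the unique fixed ray in $\mathfrak{N}$ and concludes from the classification, whereas you observe that $u$ is unipotent and rule out the hyperbolic and elliptic cases via the eigenvalue and semi-simplicity criteria of Proposition~\ref{classification}; both are valid and equally short.
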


\begin{proof}
We have $V=\{\theta^{\perp} \cap \eta^{\perp} \oplus \R \theta\} \oplus \R \eta=\theta^{\perp} \oplus \R \eta$. Let $u$ be in $G_{\theta}$, and denote by $\zeta$ the element $\varphi(u)$. Let us decompose $u(\eta)$ as $a \theta + b \eta + \zeta$. Then $0=u(\eta)^2=2ab\, (\theta\, . \, \eta)+ \zeta^2$ and we get 
\[
ab=-\dfrac{\zeta^2}{2\, (\theta. \eta)} \cdot 
\]
Since $u(\theta)=\theta, 
\theta\, .\, \eta=\theta \, u(\eta)=b\, (\theta \, . \, \eta)$ 
so that $b=1$. This gives \[a=-\dfrac{\zeta^2}{2\, (\theta. \eta)} \cdot \]
By hypothesis, there exists a linear form $\lambda \colon \theta^{\perp}\cap \eta^{\perp} \rightarrow \R$ such that for any $x$ in $\theta^{\perp} \cap \eta^{\perp}$, $u(x)=x+\lambda(x) \, \theta$.
Then we have
\[
0=x\, . \, \eta = u(x)\,.\, u(\eta)=x \, . \, \zeta+ \lambda(x) \, \theta\, . \, \eta
\]
so that 
\[
\lambda(x)=-\dfrac{(x\, . \, \zeta)}{(\theta\, . \, \eta)} \cdot
\] 
This proves that $u$ can be reconstructed from $\zeta$. For any $\zeta$ in $\theta^{\perp} \cap \eta^{\perp}$, we can define a map $u_{\zeta}$ fixing $\theta$ by the above formul\ae, and it is an isometry. This proves that $\varphi$ is a bijection.
To prove that $\varphi$ is a morphism, let $u$ and $u'$ be in $G_{\theta}$, and put $u''=u' \circ u$. Then
\[
\qquad \zeta''=\pi \{u' (u(\eta))\}=\pi \{u' (\zeta +a \theta + \eta) \}=\pi \{ \zeta + \lambda(\zeta) \theta + a \theta + \zeta' + a' \theta + \eta\}=\zeta + \zeta'.
\]
It remains to prove that $u$ is parabolic if $\zeta \neq 0$. This is easy: if $x=\alpha \theta + \beta \eta + y$ where $y$ is in $\theta^{\perp} \cap \eta^{\perp}$, then
$u(x)=\{\alpha+ \lambda(y) \} \theta + \{\beta \zeta + y\}$. Thus, if $u(x)=x$, we have $\lambda(y)=0$ and $\beta=0$. But in this case, $x^2=y^2 \leq 0$ with equality if and only if $y=0$. It follows that $\R_{+}\theta$ is the only fixed ray in $\mathfrak{N}$, so that $u$ is parabolic.
\end{proof}
\begin{definition}
Nonzero elements in $\mathcal{T}_{\theta}$ are called parabolic translations along $\theta$.
\end{definition}
This definition is justified by the fact that elements in the group $\mathcal{T}_{\theta}$ act by translation in the direction $\theta$ on $\theta^{\perp}$.
\begin{proposition} \label{ray}
Let $\theta$, $\eta$ be two isotropic and non-collinear vectors in $\mathfrak{N}$, and $\varphi \colon \mathcal{T}_{\theta} \rightarrow \theta^{\perp} \cap \eta^{\perp}$ and $\psi \colon \mathcal{T}_{\eta} \rightarrow \theta^{\perp} \cap \eta^{\perp}$ the corresponding isomorphisms. Let $u$ and $v$ be respective nonzero elements of $\mathcal{T}_{\theta}$ and $\mathcal{T}_{\eta}$, and assume that there exists an element $x$ in $\mathfrak{N}$ such that $u(x)=v(x)$. Then there exists $t > 0$ such that $\psi(v)=t\, \varphi(u)$. 
\end{proposition}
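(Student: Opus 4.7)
The plan is to exploit the explicit formulas for parabolic translations derived in the proof of Proposition \ref{commutatif}, and to use the fact that $x$ lies in $\mathfrak{N}$ (rather than simply in $V$) to extract the positivity of $t$.

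First I would fix the decomposition $V = \R\theta \oplus \R\eta \oplus (\theta^\perp \cap \eta^\perp)$, which is valid because $\theta.\eta \neq 0$ (two non-collinear isotropic vectors of $\mathfrak{N}$ necessarily satisfy $\theta.\eta > 0$, since $\mathfrak{N}$ is one connected component of the positive cone). Write $x = \alpha \theta + \beta \eta + y$ with $y \in \theta^\perp \cap \eta^\perp$, and set $\zeta = \varphi(u)$, $\xi = \psi(v)$. The formulas established in the proof of Proposition \ref{commutatif} describe $u$ on each summand; by the symmetric role of $\theta$ and $\eta$, analogous formulas hold for $v$.

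Next I would expand $u(x)$ and $v(x)$ in this basis and compare components. The component in $\theta^\perp \cap \eta^\perp$ gives the crucial identity $\beta\, \zeta = \alpha\, \xi$; the $\theta$- and $\eta$-components yield additional relations between $\alpha$, $\beta$, $y$ and the linear forms produced in Proposition \ref{commutatif}, but only the first identity will be needed.

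Finally I would invoke $x \in \mathfrak{N}$. Expanding $x^2 = 2\alpha\beta(\theta.\eta) + y^2$ and using $y^2 \leq 0$ together with $\theta.\eta > 0$, the inequality $x^2 \geq 0$ forces $\alpha\beta \geq 0$. It remains to rule out $\alpha = 0$: in that case $\beta\zeta = 0$ combined with $\zeta \neq 0$ (because $u \neq \mathrm{id}$) yields $\beta = 0$, whence $y^2 \geq 0$ forces $y = 0$, so $x = 0$, contradicting $x \in \mathfrak{N}$. Symmetrically $\beta \neq 0$. Hence $\alpha$ and $\beta$ are both nonzero of the same sign, and $t := \beta/\alpha > 0$ satisfies $\xi = t\zeta$, i.e.\ $\psi(v) = t\, \varphi(u)$.

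There is no real obstacle here: the entire argument is bookkeeping with the formulas of Proposition \ref{commutatif}. The only conceptual point is that the hypothesis $x \in \mathfrak{N}$ (and not merely $x \in V$) is precisely what upgrades the bare proportionality $\beta\zeta = \alpha\xi$ into a \emph{positive} proportionality, via the signature constraint on $x^2$.
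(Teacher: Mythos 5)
Your proposal is correct and follows essentially the same route as the paper: the decomposition $V=\R\theta\oplus\R\eta\oplus(\theta^{\perp}\cap\eta^{\perp})$, the componentwise comparison of $u(x)$ and $v(x)$ yielding $\beta\zeta=\alpha\xi$, and the use of $x\in\mathfrak{N}$ to force positivity of $t=\beta/\alpha$. The only (harmless) difference is the last step: the paper deduces $\alpha>0$ and $\beta>0$ directly from $x.\theta>0$ and $x.\eta>0$, whereas you deduce $\alpha\beta>0$ from $x^2\geq 0$ together with $y^2\leq 0$ after excluding $\alpha=0$ and $\beta=0$.
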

\begin{proof}
Let us write $x$ as $\alpha \theta + \beta \eta + y$ where $y$ is in $\theta^{\perp} \cap \eta^{\perp}$. Then
\[
\qquad u(x)=\alpha \, \theta + \beta \zeta + y + \lambda(y) \, \theta \quad \textrm{and} \quad v(x)= \alpha \, \zeta' + \beta \eta + y + \mu(y) \, \eta.
\]
\noindent Therefore, if $u(x)=v(x)$, 
\[
\qquad \{\alpha + \lambda(y)\} \, \theta - \{\beta + \mu(y) \}\, \eta + \{\beta \zeta - \alpha \zeta' \} =0
\] 
Hence $\beta \zeta - \alpha \zeta' =0$. We claim that $x$ doesn't belong to the two rays $\R \theta$ and $\R \eta$. Indeed, if $y=0$,  $\alpha=\beta=0$ so that $u(x)=0$. Thus, since $x$ lies in $\mathfrak{N}$, $x \, . \, \theta >0$ and $x \, . \, \eta >0$ so that $\alpha >0$ and $\beta >0$. Hence $\zeta'=\dfrac{\beta}{\alpha}\, \zeta$ and $\dfrac{\beta}{\alpha}>0$.
\end{proof}
\begin{corollary} \label{wazomba}
Let $\theta$, $\eta$ two isotropic and non-collinear vectors in $\mathfrak{N}$ and $u$ and $v$ be respective nonzero elements of $\mathcal{T}_{\theta}$ and $\mathcal{T}_{\eta}$. Then $u^{-1}v$ or $uv$ is hyperbolic.
\end{corollary}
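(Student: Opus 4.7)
The plan is to proceed by contradiction: suppose that neither $uv$ nor $u^{-1}v$ is hyperbolic. By the classification in Proposition \ref{classification}, any non-hyperbolic element of $\mathrm{O}_+(V)$ fixes some vector in $\mathfrak{N}$ (one in the interior when elliptic, one on the invariant isotropic ray when parabolic). So under this assumption there exist $x, y \in \mathfrak{N}$ with $uv(x)=x$ and $u^{-1}v(y)=y$, which we rewrite as
\[
u^{-1}(x)=v(x), \qquad u(y)=v(y).
\]

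The next step is to apply Proposition \ref{ray} to each of these identities. Since $\varphi\colon \mathcal{T}_{\theta}\to \theta^{\perp}\cap\eta^{\perp}$ is a group isomorphism (Proposition \ref{commutatif}), one has $\varphi(u^{-1})=-\varphi(u)$. Applied to the pair $(u^{-1},v)\in \mathcal{T}_{\theta}\times\mathcal{T}_{\eta}$ with the fixed point $x$, Proposition \ref{ray} yields a scalar $t>0$ such that $\psi(v)=t\,\varphi(u^{-1})=-t\,\varphi(u)$; applied to the pair $(u,v)$ with the fixed point $y$, it yields a scalar $s>0$ such that $\psi(v)=s\,\varphi(u)$.

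Equating these two expressions gives $(s+t)\,\varphi(u)=0$, and since $s+t>0$ we conclude $\varphi(u)=0$. But $\varphi$ is an isomorphism, so this forces $u=\mathrm{id}$, contradicting the hypothesis that $u$ is a nonzero element of $\mathcal{T}_{\theta}$. I do not foresee any substantive obstacle: the entire argument is a two-line consequence of Proposition \ref{ray}, whose content has already been set up. The only point that merits a moment of care is the sign inversion $\varphi(u^{-1})=-\varphi(u)$, which relies on $\varphi$ being a group morphism rather than just a bijection, and the observation that in both the elliptic and parabolic cases the non-hyperbolic alternative furnishes a genuine fixed vector lying in $\mathfrak{N}$ (not merely in $V$), as required by the hypothesis of Proposition \ref{ray}.
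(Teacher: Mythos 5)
Your proof is correct and follows essentially the same route as the paper: both arguments extract a fixed vector in $\mathfrak{N}$ from each non-hyperbolic alternative, apply Proposition \ref{ray} to the pairs $(u,v)$ and $(u^{-1},v)$, and derive the contradiction from the sign clash $\psi(v)=s\,\varphi(u)=-t\,\varphi(u)$ with $s,t>0$. Your explicit remarks on $\varphi(u^{-1})=-\varphi(u)$ and on why a fixed vector in $\mathfrak{N}$ exists in both the elliptic and parabolic cases are exactly the details the paper leaves implicit.
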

\begin{proof}
If $u^{-1}v$ is not hyperbolic, then there exists a nonzero vector $x$ in $\mathfrak{N}$ fixed by $u^{-1} v$. Thus, thanks to Proposition \ref{ray}, there exists $t>0$ such that $\psi(v)=t\, \varphi(u)$. By the same argument, if $uv$ is not hyperbolic, there exists $s>0$ such that $\psi(v)=s\, \varphi(u^{-1})=-s\, \varphi(u)$. This gives a contradiction.
\end{proof}
\subsection{A fixed point theorem} \label{3.3}
In this section, we fix a lattice $\Lambda$ of rank $n$ in $V$ and assume that the bilinear form on $V$ takes integral values on the lattice $\Lambda$. We denote by $\mathrm{O}_{+}(\Lambda)$ the subgroup of $\mathrm{O}_{+}(V)$ fixing the lattice.
We start by a simple characterisation of elliptic isometries fixing $\Lambda$:
\begin{lemma} \label{fini} $ $ \par
\begin{enumerate}
\item An element of $\mathrm{O}_{+}(\Lambda)$ is elliptic if and only if it is of finite order.
\par \smallskip
\item An element $u$ of $\mathrm{O}_{+}(\Lambda)$ is parabolic if and only if it is quasi-unipotent (which means that there exists an integer $k$ such that $u^k-1$ is a nonzero nilpotent element) and of infinite order.
\end{enumerate}
\end{lemma}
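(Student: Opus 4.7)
The overall strategy is to combine the trichotomy of Proposition \ref{classification} with the integrality constraint coming from preservation of $\Lambda$. The crucial input is Kronecker's theorem: an algebraic integer all of whose Galois conjugates lie on the unit circle is a root of unity. Since $u$ preserves the lattice $\Lambda$, its characteristic polynomial has integer coefficients, so every eigenvalue is an algebraic integer and its Galois conjugates are again eigenvalues of $u$. Thus, whenever $u$ has all eigenvalues of modulus $1$ (i.e.\ $u$ is elliptic or parabolic, by Proposition \ref{classification}), those eigenvalues are automatically roots of unity.

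For (1), I would argue as follows. If $u$ is elliptic, Proposition \ref{classification} says $u$ is semi-simple with spectrum on the unit circle; by Kronecker the spectrum consists of roots of unity, so some power $u^N$ has all eigenvalues equal to $1$, and semi-simplicity then forces $u^N=\mathrm{id}$. Conversely, if $u$ has finite order then $\|u^n\|$ is bounded, which by Remark \ref{bof} rules out the hyperbolic case ($\|u^n\|\sim C\lambda^n$) and the parabolic case ($\|u^n\|\sim Cn^2$); hence $u$ is elliptic.

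For (2), the forward direction is again by Kronecker: if $u$ is parabolic, its eigenvalues are roots of unity, so for some $k$ all eigenvalues of $u^k$ equal $1$ and $u^k-\mathrm{id}$ is nilpotent. This nilpotent is nonzero because raising the $3\times 3$ Jordan block at eigenvalue $1$ in Proposition \ref{classification}(3) to the $k$-th power yields another non-trivial unipotent Jordan block, so $u^k\neq \mathrm{id}$; applying the same reasoning to every iterate shows $u$ has infinite order. Conversely, assume $u$ is quasi-unipotent of infinite order. Part (1) already rules out the elliptic case. If $u$ were hyperbolic, Proposition \ref{classification}(1) would provide an eigenvalue $\lambda>1$; then $\lambda^k-1\neq 0$ would be an eigenvalue of $u^k-\mathrm{id}$ for every $k\geq 1$, contradicting quasi-unipotency. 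Hence $u$ must be parabolic.

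The only step requiring any thought is the verification, inside the forward direction of (2), that $u^k-\mathrm{id}$ is genuinely \emph{nonzero} rather than merely nilpotent; this is where the non-triviality of the Jordan block from Proposition \ref{classification}(3) is needed, together with the elementary observation that a unipotent Jordan block of size $\geq 2$ remains non-semisimple under all positive powers. Everything else reduces to quoting the classification of Proposition \ref{classification}, the growth estimates of Remark \ref{bof}, and Kronecker's theorem, so I do not expect any serious obstacle.
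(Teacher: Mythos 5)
Your proof is correct, and for part (2) it is essentially the paper's argument: Kronecker's theorem applied to the integral characteristic polynomial gives one direction, and the fact that elliptic and hyperbolic isometries are semi-simple (Proposition \ref{classification}) gives the other. The only genuine divergence is in part (1). For the implication ``elliptic $\Rightarrow$ finite order'' the paper does not invoke Kronecker at all: it chooses a fixed vector $\alpha$ in the interior of $\mathfrak{N}$ that is defined over $\Q$ (possible because $\ker(u-\mathrm{id})$ is a rational subspace meeting the open cone), notes that the form is negative definite on $\alpha^{\perp}$, and concludes from the finiteness of the orthogonal group of a definite lattice acting on $\alpha^{\perp}\cap\Lambda$. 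Your route --- Kronecker forces the spectrum to consist of roots of unity, and semi-simplicity then yields $u^N=\mathrm{id}$ --- is equally valid and arguably more uniform, since it reuses the same arithmetic input as part (2); the paper's lattice argument has the mild advantage of exhibiting an explicit finite group containing $u$, which is the mechanism reused later (e.g.\ in Proposition \ref{gauss}). Your appeal to the growth estimates of Remark \ref{bof} for ``finite order $\Rightarrow$ elliptic'' replaces the paper's one-word ``obviously,'' and your verification that $u^k-\mathrm{id}$ is genuinely nonzero (via the persistence of the nontrivial Jordan block under powers) fills in a detail the paper leaves implicit. No gaps.
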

\begin{proof} $ $ \par
\begin{enumerate}
\item
A finite element is obviously elliptic. Conversely, if $u$ is an elliptic element of $\mathrm{O}_{+}(\Lambda)$, there exists a fixed vector $\alpha$ in the interior of $\mathfrak{N}$. Since $\ker\, (u-\mathrm{id})$ is defined over $\Q$, we can find such an $\alpha$ defined over $\Q$. In that case, $u$ must act finitely on $\alpha^{\perp} \cap \Lambda$ and we are done.
\par \smallskip
\item A quasi-unipotent element which is of infinite order is parabolic (since it is not semi-simple). Conversely, if $g$ is a parabolic element in $\mathrm{O}_{+}(\Lambda)$, the characteristic polynomial of $g$ has rational coefficients and all its roots are of modulus one. Therefore all eigenvalues of $g$ are roots of unity thanks to Kronecker's theorem.
\end{enumerate}
\end{proof}

One of the most important properties of parabolic isometries fixing $\Lambda$ is the following:
\begin{proposition} \label{gauss}
Let $u$ be a parabolic element in $\mathrm{O}_{+}(\Lambda)$. Then \emph{:}
\begin{enumerate}
\item There exists a vector $\theta$ in $\mathfrak{N} \cap \Lambda$ such that $u(\theta)=\theta$.
\item There exists $k>0$ such that $u^k$ belongs to $\mathcal{T}_{\theta}$.
\end{enumerate}
\end{proposition}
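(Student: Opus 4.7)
For assertion (1), the plan is to identify the unique $u$-invariant isotropic ray of $\mathfrak{N}$, provided by Proposition \ref{classification}(3), with the radical of a rational bilinear form on a rational subspace; this automatically makes the ray rational. Since $u$ preserves the lattice $\Lambda$, the fixed subspace $W := \ker(u - \mathrm{id})$ is defined over $\Q$. By Lemma \ref{lemmenef}(1) the intersection form is negative semi-definite on $W$, and by Lemma \ref{lemmenef}(2) together with the uniqueness part of Proposition \ref{classification}(3), a nonzero element of $W$ is isotropic if and only if it lies on the $u$-invariant ray $\R \theta_0 \subset \mathfrak{N}$; moreover $\theta_0$ is orthogonal to every element of $W$. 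Hence the radical of the restricted form on $W$ is exactly $\R \theta_0$. Because $W$ and the intersection form are rational, this radical is a rational line, so it meets $\Lambda \setminus \{0\}$; changing sign if necessary, the resulting vector $\theta$ lies in $\mathfrak{N} \cap \Lambda$ and is fixed by $u$.

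For assertion (2), the plan is to exploit quasi-unipotence to control the action of $u$ on $Q_\theta = \theta^{\perp}/\R\theta$. The proof of Proposition \ref{classification}(3) establishes that $u$ preserves $\theta^{\perp}$ and induces a \emph{semi-simple} endomorphism $\bar u$ on $Q_\theta$ all of whose eigenvalues have modulus one. Since $u$ lies in $\mathrm{O}_{+}(\Lambda)$, Lemma \ref{fini}(2) shows that $u$ is quasi-unipotent, so the eigenvalues of $\bar u$ are in fact roots of unity. Combined with semi-simplicity, this forces $\bar u^{k} = \mathrm{id}$ for some positive integer $k$. Since $u^{k}(\theta) = \theta$ is automatic, one concludes $u^{k} \in \mathcal{T}_{\theta}$.

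The delicate point is the rationality argument in (1): the fixed isotropic ray is \emph{a priori} only a real line, and there is no direct reason for it to be rational even though $u$ has integer matrix in a basis of $\Lambda$. The key trick is to realise it intrinsically as the radical of the restricted intersection form on the rational subspace $W$, which is manifestly a rational object. Once (1) is settled, (2) is essentially formal, combining the structure of parabolic isometries given by Proposition \ref{classification}(3) with Kronecker's theorem on algebraic integers of modulus one through Lemma \ref{fini}(2).
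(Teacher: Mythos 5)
Your proof is correct, and it is worth comparing the two halves separately. For part (1) you and the paper share the same core idea --- work inside the rational subspace $W=\ker(u-\mathrm{id})$ and pin down the invariant ray using the restricted intersection form together with Lemma \ref{lemmenef} --- but the executions differ: the paper argues by contradiction, showing that if the ray were irrational then $q|_{W_{\Q}}$ would be anisotropic, hence (after a rational Gram--Schmidt diagonalisation $q=-\sum_i \ell_i^2$) negative definite on all of $W$, contradicting $q(\theta)=0$; you instead identify $\R\theta$ directly as the radical of $q|_W$, which is manifestly a rational line. Your version is arguably cleaner, since it sidesteps the subtlety that ``negative definite on rational points'' does not by itself imply ``negative definite on real points'' (the diagonalisation over $\Q$ is exactly what the paper needs to bridge that gap). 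For part (2) the mechanisms genuinely differ: the paper observes that $u$ acts on the negative definite lattice $\Sigma=(\theta^{\perp}\cap\Lambda)/\Z\theta$, whose isometry group is finite, so some power of $u$ acts trivially on $Q_{\theta}$; you instead combine the semi-simplicity of $\bar u$ on $Q_{\theta}$ (an isometry of a definite form lies in a compact group) with quasi-unipotence from Lemma \ref{fini}(2) to conclude $\bar u^{k}=\mathrm{id}$. Both are valid; the paper's lattice argument is self-contained and does not invoke Kronecker's theorem, while yours reuses machinery already established for Lemma \ref{fini} and makes explicit where the arithmetic input (roots of unity) enters.
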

\begin{proof}$ $ \par
\begin{enumerate}
\item Let $W= \mathrm{ker}\,(f-\mathrm{id})$, and assume that the line $\R \theta$ doesn't meet ${\Lambda}_{\Q}$. 
Then the quadratic form $q$ is negative definite on $\theta^{\perp} \cap W_{\Q}$. We can decompose $q_{W_{\Q}}$ as $-\sum_i \ell_i^2$ where the $\ell_i$'s are linear forms on $W_{\Q}$. Then $q$ is also negative definite on $W$, but $q(\theta)=0$ so we get a contradiction.
\par \smallskip
\item By the first point, we know that we can choose an isotropic invariant vector $\theta$ in $\Lambda$. Let us consider the free abelian group $\Sigma:=(\theta^{\perp} \cap \Lambda)/ \Z \theta$, the induced quadratic form is negative definite. Therefore, since $u$ is an isometry, the action of $u$ is finite on $\Sigma$, so that an iterate of $u$ belongs to $\mathcal{T}_{\theta}$.

\end{enumerate}
\end{proof}

The definition below is motivated by Remark \ref{bof}.
\begin{definition}
A subgroup $G$ of $\mathrm{O}_+ (V)$ is of \textit{moderate growth} if it contains no hyperbolic element.
\end{definition}
Among groups of moderate growth, the most simple ones are finite subgroups of $\mathrm{O}_+ (V)$. Recall the following well-known fact:
\begin{lemma} \label{burnside}
Any torsion subgroup of $\mathrm{GL}(n, \Q)$ is finite.
\end{lemma}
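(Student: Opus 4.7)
The plan is to show that any torsion subgroup $G$ of $\mathrm{GL}(n,\Q)$ has bounded exponent, and then to invoke the classical theorem of Burnside asserting that a subgroup of $\mathrm{GL}(n,K)$ of finite exponent in characteristic zero is finite.

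First, I would fix an element $g \in G$. Since $g$ has finite order, $g$ is diagonalizable over $\overline{\Q}$ and all its eigenvalues are roots of unity. These eigenvalues are roots of the characteristic polynomial of $g$, which has rational coefficients and degree $n$; consequently each eigenvalue of $g$ is a primitive $m$-th root of unity for some integer $m$ satisfying $[\Q(\zeta_m):\Q] = \varphi(m) \leq n$. Because $\varphi(m) \to \infty$ as $m \to \infty$, the set $M$ of such integers is finite. Setting $N = \mathrm{lcm}(M)$, every eigenvalue of every element of $G$ is an $N$-th root of unity.

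Second, for any $g \in G$ the matrix $g^N$ is diagonalizable (as $g$ itself is of finite order) with all eigenvalues equal to $1$, hence $g^N = \mathrm{id}$. Therefore $G$ has exponent dividing $N$.

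Third, I would conclude by Burnside's theorem: a linear group of finite exponent over a field of characteristic zero is finite. As an alternative more elementary route, one can argue by induction on $n$: if $\Q^n$ admits a nontrivial $G$-stable subspace $W$, then the restriction morphism $G \to \mathrm{GL}(W) \times \mathrm{GL}(\Q^n/W)$ has kernel embedding into the abelian group $\mathrm{Hom}(\Q^n/W, W) \cong \Q^{\dim W \cdot (n - \dim W)}$, which is torsion-free; since $G$ is torsion, this kernel is trivial, and one concludes by the induction hypothesis. The true obstacle is the base step where $\Q^n$ is $G$-irreducible, and it is precisely here that Burnside's theorem is genuinely needed; in the context of this paper it is acceptable to cite it as a classical result.
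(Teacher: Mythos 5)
Your proof is correct and follows essentially the same route as the paper: bound the order of each element via the constraint $\varphi(m)\leq n$ on the orders of eigenvalues, deduce that $G$ has bounded exponent, and conclude by Burnside's theorem on linear groups of finite exponent. Your intermediate step (using diagonalizability to pass from ``eigenvalues are $N$-th roots of unity'' to $g^N=\mathrm{id}$) makes explicit a point the paper glosses over, but the argument is the same.
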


\begin{proof}
Let $g$ be an element in $G$, and $\zeta$ be an eigenvalue of $g$. If $m$ is the smallest positive integer such that $\zeta^m=1$, 
then $\varphi(m)=\mathrm{deg}_{\Q} (\zeta) \leq n$ where $\varphi(m)=\sum_{d |m} d $. Since $\varphi(k)\underset{k \rightarrow + \infty}{\longrightarrow} + \infty$, there are finitely many possibilities for $m$. Therefore, there exists a constant $c(n)$ such that the order of any $g$ in $G$ divides $c(n)$. This means that $G$ has finite exponent in $\mathrm{GL}(n, \C)$, and the Lemma follows from Burnside's theorem.
\end{proof}
As a consequence of Lemmas \ref{fini} and \ref{burnside}, we get:

\begin{corollary} \label{burnside}
A subgroup of $\mathrm{O}_+ (\Lambda)$ is finite if and only if all its elements are elliptic.
\end{corollary}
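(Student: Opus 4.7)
The plan is to derive both implications directly from Lemma \ref{fini}(1) and the preceding Lemma labelled \ref{burnside} (the torsion subgroups of $\mathrm{GL}(n,\Q)$ are finite).

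For the easy direction, if $G \subset \mathrm{O}_+(\Lambda)$ is finite, then every element of $G$ has finite order, and Lemma \ref{fini}(1) tells us that an element of $\mathrm{O}_+(\Lambda)$ of finite order is necessarily elliptic.

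For the converse, suppose every element of $G$ is elliptic. By the same Lemma \ref{fini}(1), every element of $G$ has finite order, so $G$ is a torsion group. Since $G$ preserves the lattice $\Lambda$ of rank $n$, a choice of basis of $\Lambda$ realizes $G$ as a subgroup of $\mathrm{GL}(n,\Z) \subset \mathrm{GL}(n,\Q)$. The Burnside-type statement (Lemma \ref{burnside}, stated just above) then forces $G$ to be finite.

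There is essentially no obstacle: the corollary is just the conjunction of Lemma \ref{fini}(1) (which identifies ``elliptic'' with ``finite order'' inside $\mathrm{O}_+(\Lambda)$) and Lemma \ref{burnside} (which upgrades ``torsion'' to ``finite'' inside $\mathrm{GL}(n,\Q)$). The only thing worth being careful about is to note that the hypothesis ``$G \subset \mathrm{O}_+(\Lambda)$'' is what allows us to embed $G$ into $\mathrm{GL}(n,\Z)$ and thus to apply the Burnside lemma in the rational general linear group.
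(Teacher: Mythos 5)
Your proof is correct and is exactly the argument the paper intends: the corollary is stated there as an immediate consequence of Lemma \ref{fini}(1) (elliptic $\Leftrightarrow$ finite order in $\mathrm{O}_+(\Lambda)$) combined with the Burnside-type lemma on torsion subgroups of $\mathrm{GL}(n,\Q)$. Your remark that the lattice hypothesis is what provides the embedding into $\mathrm{GL}(n,\Z)$ is the right point to flag.
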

We now concentrate on infinite groups of moderate growth. The main theorem we want to prove is Gizatullin's fixed point theorem:
\begin{theorem} \label{ptfixe}
Let $G$ be an infinite subgroup of moderate growth in $\mathrm{O}_+ (\Lambda)$. Then \emph{:}
\begin{enumerate}
\item There exists an isotropic element $\theta$ in $\mathfrak{N} \cap \Lambda$ such that for any element $g$ in $G$, $g(\theta)=\theta$.  
\par \smallskip
\item The group $G$ can be written as $G=\Z^{r} \rtimes H$ where $H$ is a finite group and $r>0$.
\end{enumerate}

\end{theorem}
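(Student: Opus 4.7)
The natural plan is to combine Proposition~\ref{gauss} (which produces a lattice-isotropic vector fixed by a parabolic element of $\mathrm{O}_+(\Lambda)$) with Corollary~\ref{wazomba} (which forbids two nontrivial parabolic translations along distinct isotropic rays from coexisting in a group without hyperbolic elements).

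For the first assertion, I would begin by observing that since $G$ is infinite, Corollary~\ref{burnside} guarantees the existence of at least one parabolic element $u \in G$. Applying Proposition~\ref{gauss} to $u$ produces an isotropic $\theta \in \mathfrak{N}\cap\Lambda$, which I may take primitive in its ray, together with an integer $k \geq 1$ such that $u^k \in \mathcal{T}_\theta \setminus \{\mathrm{id}\}$. For any $g \in G$, conjugation by $g$ carries $\mathcal{T}_\theta$ isomorphically onto $\mathcal{T}_{g(\theta)}$, so $v := g u^k g^{-1}$ is a nontrivial element of $\mathcal{T}_{g(\theta)}$. If $g(\theta)$ and $\theta$ were non-collinear, Corollary~\ref{wazomba} would produce a hyperbolic element $u^k v$ or $u^{-k} v$ of $G$, contradicting moderate growth. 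Hence $g(\theta) \in \mathbb{R}\theta \cap \Lambda = \mathbb{Z}\theta$, and the primitivity of $\theta$ combined with $g \in \mathrm{O}_+(\Lambda)$ forces $g(\theta) = \theta$.

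For (2), I would consider the short exact sequence
\[
1 \longrightarrow K \longrightarrow G \xrightarrow{\ \chi_\theta\ } H \longrightarrow 1, \qquad K := G \cap \mathcal{T}_\theta, \quad H := \chi_\theta(G).
\]
The induced bilinear form on $\Sigma := (\theta^\perp \cap \Lambda)/\mathbb{Z}\theta$ is integral and negative definite, so $H \subset \mathrm{O}(\Sigma)$ is a discrete subgroup of a compact orthogonal group, hence finite. On the other hand, Proposition~\ref{commutatif} identifies $\mathcal{T}_\theta$ with a real vector space; since $G \subset \mathrm{O}(\Lambda) \subset \mathrm{GL}(\Lambda)$ is discrete in $\mathrm{GL}(V)$, its intersection $K$ with $\mathcal{T}_\theta$ is discrete, hence free abelian of finite rank $r$. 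Infiniteness of $G$ together with finiteness of $H$ forces $r > 0$.

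The main obstacle, as I see it, is promoting this short exact sequence to a genuine semidirect product $G = \mathbb{Z}^r \rtimes H$: the extension class lives in $H^2(H,\mathbb{Z}^r)$ and does not vanish formally. I would construct the splitting geometrically. Because $K$ acts trivially on $Q_\theta$, the quotient $H$ acts on $\Sigma \otimes \mathbb{Q}$; averaging any rational class over the finite group $H$ yields an $H$-invariant class, which I lift to a vector $\alpha \in (\theta^\perp \cap \Lambda) \otimes \mathbb{Q}$ and then to a vector $\beta = \alpha + t\theta$ lying in the interior of $\mathfrak{N}$ for $t \gg 0$. The $G$-stabilizer of $\beta$ consists of elliptic elements by Proposition~\ref{classification}(2), hence is finite by Lemma~\ref{fini}, and a direct verification (using that the cocycle $g \mapsto g\beta - \beta$ takes values in $\mathbb{R}\theta$, together with the absence of hyperbolic elements in $G$) shows it projects isomorphically onto $H$, yielding the desired section.
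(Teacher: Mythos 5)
Your treatment of part (1) and of the exact sequence in part (2) is correct and follows essentially the same route as the paper: a parabolic element exists by Corollary \ref{burnside}, Proposition \ref{gauss} places a power of it in $\mathcal{T}_\theta$ for some $\theta\in\mathfrak{N}\cap\Lambda$, and conjugating and invoking Corollary \ref{wazomba} rules out $g(\theta)$ non-collinear with $\theta$; the paper then uses the finiteness of the orthogonal group of the negative definite lattice $(\theta^{\perp}\cap\Lambda)/\Z\theta$ to obtain a finite-index normal subgroup of $G$ inside $\mathcal{T}_\theta$, which is free abelian of positive rank, exactly as you do. (You are in fact a little more careful than the paper in disposing of the case $g(\theta)=c\,\theta$ with $c\neq 1$ via primitivity.)

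The gap is in your splitting argument. Your vector satisfies $\alpha\in(\theta^{\perp}\cap\Lambda)\otimes\Q$, so $\beta=\alpha+t\theta$ remains in $\theta^{\perp}$ for every $t$, and since $\theta$ is isotropic, $\beta^2=\alpha^2\leq 0$ for all $t$: the form is negative semi-definite on $\theta^{\perp}$, so $\beta$ never enters the interior of $\mathfrak{N}$, and the appeal to Proposition \ref{classification}(2) for ellipticity of the stabilizer is vacuous. The device of adding $t\theta$ to push a vector into the interior of $\mathfrak{N}$ (used in the proof of Proposition \ref{classification}) requires a vector \emph{not} orthogonal to $\theta$, which is incompatible with your choice of $\alpha$. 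You are right that the splitting does not follow formally — an abstract extension $1\to\Z^r\to G\to H\to 1$ with $H$ finite need not split (Bieberbach-type groups give non-split examples) — but your proposed repair does not close this gap. For what it is worth, the paper's own proof of (2) also stops at the finite-index free abelian normal subgroup and never addresses the splitting; the conclusion there should really be read as ``$G$ is virtually $\Z^r$,'' and a proof of the literal semidirect product decomposition would require an additional structural argument not present in either text.
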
 
\begin{proof} $ $ \par
\begin{enumerate}
\item
Thanks to Corollary \ref{burnside}, $G$ contains parabolic elements. Let $g$ be a parabolic element in $G$ and $\theta$ be an isotropic vector. Let $\Lambda^*=(\theta^{\perp} \cap \Lambda)/ \Z \theta$. Since the induced quadratic form on $\Lambda^*$ is negative definite, and an iterate of $g$ acts finitely on $\Lambda^*$; hence $g^k$ is in $\mathcal{T}_{\theta}$ for some integer $k$. 
\par \medskip
\noindent Let $\tilde{g}$ be another element of $G$, and assume that $\tilde{g}$ doesn't fix $\theta$. We put $\eta=\tilde{g}(\theta)$. If $u={g}^k$ and $v=\tilde{g} g^k \tilde{g}^{-1}$, then $u$ and $v$ are nonzero elements of $\mathcal{T}_{\theta}$ and $\mathcal{T}_{\eta}$ respectively. Thanks to Corollary \ref{waza}, $G$ contains hyperbolic elements, which is impossible since it is of moderate growth.
\par \medskip
\item Let us consider the natural group morphism 
\[
\varepsilon \colon G \hookrightarrow \mathrm{O}_{+}(V) \rightarrow \mathrm{O}(\Lambda^*).
\]
The image of $\varepsilon$ being finite, $\ker \, (\varepsilon)$ is a normal subgroup of finite index in $G$. This subgroup is included in $\mathcal{T}_{\theta}$, so it is commutative. Besides, it has no torsion thanks to Proposition \ref{commutatif} (1), and is countable as a subgroup of $\mathrm{GL}_n(\Z)$. Thus it must be isomorphic to $\Z^r$ for some $r$.
\end{enumerate}

\end{proof}

\section{Background material on surfaces} \label{4}

\subsection{The invariant nef class} \label{4.1}
Let us consider a pair $(X, f)$ where $X$ is a smooth complex projective surface and $f$ is an automorphism of $X$ whose action on $\mathrm{NS}(X)_{\R}$ is a parabolic isometry. 
\begin{proposition} \label{tropmalin}
There exists a unique non-divisible nef vector $\theta$ in $\mathrm{NS}\,(X) \cap \ker \left( f^* - \mathrm{id} \right)$. Besides, $\theta$ satisfies $\theta^2=0$ and ${K}_X. \theta=0$.
\end{proposition}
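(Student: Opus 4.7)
The plan is to locate $\theta$ using the Jordan-form structure of $f^{*}$, verify nef-ness by a limiting argument on iterates of an ample class, and then read off the remaining assertions from Lemma~\ref{lemmenef}.

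First, I will produce $\theta$: the isometry $f^{*}$ lies in $\mathrm{O}_{+}(\mathrm{NS}(X))$ and is parabolic, so Proposition~\ref{gauss}(1) supplies a nonzero $f^{*}$-fixed vector in $\mathfrak{N}\cap\mathrm{NS}(X)$. By Proposition~\ref{classification}(3) the only $f^{*}$-invariant ray in $\mathfrak{N}$ is isotropic; replacing the given vector by its primitive integral representative produces a non-divisible $\theta\in\mathrm{NS}(X)$ with $\theta^{2}=0$ and $f^{*}\theta=\theta$.

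Next, the nef-ness of $\theta$. I fix an ample $H$ on $X$; since $f$ is biregular, $(f^{*})^{n}H$ is ample for every $n\ge 0$, so $(f^{*})^{n}H\cdot C\ge 0$ for every irreducible curve $C$. I write $H$ in the Jordan basis $(e_{1}=\theta,e_{2},e_{3})$ of Proposition~\ref{classification}(3) together with a semisimple complement $S$. Invariance of the intersection form under $f^{*}$ forces $e_{1}\cdot e_{2}=0$ and $e_{1}\cdot e_{3}=-e_{2}^{2}$; the latter is strictly positive because $e_{2}\in\theta^{\perp}$ is not collinear with $\theta$ and the form is negative definite on $\theta^{\perp}$ modulo $\R\theta$. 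Moreover any eigenvector of $f^{*}|_{S}$ is orthogonal to $\theta$, either by Lemma~\ref{lemmenef}(2) (if the eigenvalue is $1$) or because $\lambda(\alpha\cdot\theta)=\alpha\cdot\theta$ forces $\alpha\cdot\theta=0$ when $\lambda\ne 1$, so $S\perp\theta$. Decomposing $H=h_{1}e_{1}+h_{2}e_{2}+h_{3}e_{3}+s$ gives $H\cdot\theta=h_{3}(e_{1}\cdot e_{3})$, which is strictly positive because $H$ is interior to $\mathfrak{N}$ and $\theta\in\mathfrak{N}\setminus\{0\}$; hence $h_{3}>0$. Iterating $f^{*}$ on the Jordan block yields
\[
(f^{*})^{n}H = \Bigl(h_{1}+nh_{2}+\binom{n}{2}h_{3}\Bigr)e_{1} + (h_{2}+nh_{3})e_{2} + h_{3}e_{3} + (f^{*})^{n}s,
\]
and since $(f^{*})^{n}s$ is bounded (as $f^{*}|_{S}$ is semisimple with eigenvalues of modulus one), dividing by $\binom{n}{2}$ shows that $(f^{*})^{n}H/\binom{n}{2}\to h_{3}\theta$ in $\mathrm{NS}(X)_{\R}$. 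Pairing with any irreducible $C$ and passing to the limit then gives $\theta\cdot C\ge 0$, so $\theta$ is nef.

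Finally, uniqueness and the vanishing against $K_{X}$ follow from Lemma~\ref{lemmenef}. Let $\eta$ be any non-divisible nef class in $\mathrm{NS}(X)$ with $f^{*}\eta=\eta$. Lemma~\ref{lemmenef}(1) gives $\eta^{2}\le 0$, whereas Nakai--Moishezon's criterion (recorded in \S\ref{2}) gives $\eta^{2}\ge 0$; thus $\eta^{2}=0$, and Lemma~\ref{lemmenef}(2) makes $\eta$ proportional to $\theta$. Since $-\theta$ cannot be nef (the intersection form is non-degenerate), non-divisibility forces $\eta=\theta$. Lastly, $f^{*}K_{X}=K_{X}$ because $f$ is biregular, so one more application of Lemma~\ref{lemmenef}(2) yields $K_{X}\cdot\theta=0$. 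The main technical step is the asymptotic identification $(f^{*})^{n}H/\binom{n}{2}\to h_{3}\theta$ with $h_{3}>0$: this is what transfers the positivity of $H$ against all curves to $\theta$, after which the remaining assertions reduce to direct applications of the lemmas of \S\ref{3}.
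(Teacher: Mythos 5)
Your proof is correct, but the key existence-and-nefness step is organized quite differently from the paper's. The paper applies Brouwer's fixed point theorem to a compact affine section of the nef cone: this produces in one stroke a nef eigenvector of $f^*$, whose eigenvalue must be $1$ by parabolicity; then $\theta^2=0$ and $K_X\cdot\theta=0$ follow from Lemma~\ref{lemmenef}, integrality from Proposition~\ref{gauss}(1), and uniqueness from the uniqueness of the fixed isotropic ray --- exactly as in your last paragraph. You instead start from the integral isotropic fixed vector supplied by Proposition~\ref{gauss}(1) together with Proposition~\ref{classification}(3), and prove nefness by hand via the asymptotics $(f^*)^nH/\binom{n}{2}\to h_3\,\theta$ for an ample class $H$, using the Jordan-block relations $e_1\cdot e_2=0$ and $e_1\cdot e_3=-e_2^2>0$ to see that $h_3>0$. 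Both arguments are sound. Yours is more computational but makes the dynamical meaning of $\theta$ explicit (it is the normalized limit of the orbit of any ample class, in the spirit of Remark~\ref{bof}) and replaces a topological fixed-point argument by linear algebra; the paper's is shorter and avoids the Jordan bookkeeping, at the cost of the (mild) verification that the affine section of the nef cone is compact and convex. Your handling of uniqueness and of $K_X\cdot\theta=0$ coincides with the paper's.
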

\begin{proof}
Let $\mathcal{S}$ be the space of half-lines $\R_{+} \mu$ where $\mu$ runs through nef classes in $\mathrm{NS}\,(X)$. Taking a suitable affine section of the nef cone so that each half-line in $\mathcal{S}$ is given by the intersection with an affine hyperplane, we see that $\mathcal{S}$ is bounded and convex, hence homeomorphic to a closed euclidian ball in $\R^{n-1}$. By Brouwer's fixed point theorem, $f^*$ must fix a point in $\mathcal{S}$. This implies that $f^* \theta=\lambda \,\theta$ for some nef vector $\theta$ and some positive real number $\lambda$ which must be one as $f$ is parabolic. 
\par \medskip
Since $\theta$ is nef, $\theta^2 \geq 0$. By Lemma \ref{lemmenef} (1), $\theta^2=0$ and by Lemma  \ref{lemmenef} (2), $K_X . \theta=0$.
It remains to prove that $\theta$ can be chosen in $\mathrm{NS}\,(X)$. This follows from Lemma \ref{gauss} (1). Since $\R \theta$ is the unique fixed isotropic ray, $\theta$ is unique up to scaling. It is completely normalized if it is assumed to be non-divisible.
\end{proof}
\begin{proposition} \label{mg}
Let $G$ be an infinite group of automorphisms of $X$ having moderate growth. Then there exists a $G$-invariant nef class $\theta$ in $\mathrm{NS}\,(X)$. 
\end{proposition}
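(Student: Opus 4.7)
The plan is to apply Gizatullin's fixed point theorem (Theorem \ref{ptfixe}) to the image $G^{*}$ of $G$ inside $\mathrm{O}_{+}(\mathrm{NS}\,(X))$, and then upgrade the resulting common isotropic fixed vector to a nef class by invoking the uniqueness part of Proposition \ref{classification}(3). I would split into two cases depending on whether $G^{*}$ is finite or infinite.

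If $G^{*}$ is finite, the construction is entirely elementary: pick any ample class $H \in \mathrm{NS}\,(X)$ and set $\theta = \sum_{h \in G^{*}} h(H)$. This is a sum of ample classes, hence ample, and is tautologically $G^{*}$-invariant, so it is a $G$-invariant ample class, and in particular nef.

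Now assume that $G^{*}$ is infinite. The hypothesis that $G$ has moderate growth means precisely that no element of $G$ acts hyperbolically on $\mathrm{NS}\,(X)$, so $G^{*}$ is an infinite subgroup of moderate growth in $\mathrm{O}_{+}(\Lambda)$ with $\Lambda = \mathrm{NS}\,(X)$. Theorem \ref{ptfixe}(1) then produces an isotropic vector $\theta \in \mathfrak{N} \cap \mathrm{NS}\,(X)$ fixed by every element of $G^{*}$. All that remains is to check that $\theta$ is nef. By Corollary \ref{burnside}, the infinite group $G^{*}$ cannot consist only of elliptic elements, so it contains at least one parabolic element $f^{*}$. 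Proposition \ref{tropmalin} provides a unique non-divisible nef class $\theta_{f} \in \mathrm{NS}\,(X)$ fixed by $f^{*}$, while Proposition \ref{classification}(3) asserts that $\R \theta_{f}$ is the \emph{only} ray of $\mathfrak{N}$ fixed pointwise by $f^{*}$. Since $\theta$ is isotropic, lies in $\mathfrak{N}$, and is fixed by $f^{*}$, we must have $\theta = c\, \theta_{f}$ for some $c > 0$, so $\theta$ is nef.

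The only mildly subtle step is the passage from an abstract common isotropic fixed vector in $\mathfrak{N}$ (as provided by Theorem \ref{ptfixe}) to a genuinely nef class, but this is immediate once the uniqueness of the parabolic invariant ray is combined with the existence of a parabolic element in $G^{*}$, which is itself granted by Corollary \ref{burnside} in the infinite case. I do not foresee any further obstacle.
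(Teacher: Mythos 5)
Your proof is correct and follows essentially the same route as the paper, which simply cites Theorem \ref{ptfixe} together with Proposition \ref{tropmalin}; your identification of the common fixed isotropic vector with the nef class $\theta_f$ via the uniqueness of the parabolic invariant ray is exactly the intended argument. Your separate treatment of the case where the image of $G$ in $\mathrm{O}_{+}(\mathrm{NS}\,(X))$ is finite (by averaging an ample class) is a worthwhile extra precaution that the paper leaves implicit.
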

\begin{proof}
This follows directly from Theorem \ref{ptfixe} and Proposition \ref{tropmalin}.
\end{proof}
\subsection{Constructing elliptic fibrations} \label{4.2}
In this section, our aim is to translate the question of the existence of $f$-invariant elliptic fibrations in terms of the invariant nef class $\theta$.
\begin{proposition} \label{nefnef}
If $(X, f)$ is given, then $X$ admits an invariant elliptic fibration if and only if a multiple $N \theta$ of the $f$-invariant nef class can be lifted to a divisor $D$ in the Picard group $\mathrm{Pic}\,(X)$ such that $\mathrm{dim} \, |D| =1$. Besides, such a fibration is unique.
\end{proposition}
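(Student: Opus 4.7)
The plan is to prove both implications and uniqueness together, with the main technical point being a rigidity principle for curves numerically proportional to $\theta$ that yields both $f$-invariance of the constructed fibration and its uniqueness. For the direct implication, I would take an $f$-invariant elliptic fibration $\pi \colon X \rightarrow B$ with general fiber $F$ and observe that $F^2 = 0$, $K_X \cdot F = 0$ (adjunction), and $[F]$ is a nef, isotropic, $f^*$-invariant class, so the uniqueness in Proposition \ref{tropmalin} forces $[F] = k \, \theta$ for some positive integer $k$. Picking any divisor $A$ on $B$ with $\dim|A| = 1$ (a point if $g(B)=0$, or a general divisor of degree $g(B)+1$ when $g(B) \geq 1$, by Riemann--Roch), the divisor $D := \pi^* A$ has $\dim |D| = 1$ by the projection formula $\pi_* \oo_X(\pi^* A) = \oo_B(A)$ and class a positive multiple of $\theta$.

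For the converse, I would first reduce to the case where $|D|$ is base-point free. Writing $D = M + D_0$ with fixed part $M$ and moving part $D_0$: both are effective and $\theta$ is nef, so $M \cdot \theta$ and $D_0 \cdot \theta$ are nonnegative with sum $N \theta^2 = 0$, whence both vanish. Since $D_0$ moves, $D_0^2 \geq 0$, and as the intersection form is negative semi-definite on $\theta^{\perp}$ with kernel $\R \theta$, the Hodge index theorem forces $[D_0]$ to be a positive integer multiple of $\theta$ (integrality following from primitivity of $\theta$). Replacing $D$ by $D_0$, I would then observe that two generic members of $|D|$ meet properly with intersection number $D^2 = 0$, hence are disjoint, which precludes any base point; the resulting morphism $\phi_{|D|} \colon X \rightarrow \mathbb{P}^1$ admits a Stein factorization $\phi_{|D|} = r \circ s$ with $s \colon X \rightarrow B$ having connected fibers. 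A general fiber $F$ of $s$ has class a positive multiple of $\theta$, so $F^2 = 0 = K_X \cdot F$ and $g(F) = 1$ by adjunction: $s$ is an elliptic fibration.

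The hard part will be $f$-invariance of $s$, which I plan to deduce from the following rigidity principle: any irreducible curve $C \subset X$ with $C \cdot \theta = 0$ is \emph{vertical}, i.e.\ contained in a fiber of $s$, because a horizontal irreducible curve would meet a general fiber $F$ in $\deg(s|_C) > 0$ points. Applied to $f^* F$ for a general fiber $F$ --- which is irreducible and effective with class $[f^*F] = f^*[F] = [F]$ a multiple of $\theta$ --- the principle forces $f^* F$ to be contained in a single fiber of $s$, and equality of numerical classes with that general fiber forces $f^* F$ to equal it. Hence $f^{-1}$ permutes fibers on a dense open, extends to an automorphism of $B$ by properness, and $s$ is honestly $f$-invariant. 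Uniqueness follows from the same rigidity principle: two $f$-invariant elliptic fibrations have general fibers with classes proportional to the unique $\theta$, so each fibration's general fibers are vertical for the other, forcing the two fibrations to coincide up to isomorphism of base.
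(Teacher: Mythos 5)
Your proposal is correct and follows essentially the same route as the paper: strip off the fixed part using $D\cdot\theta=0$ and negative semi-definiteness on $\theta^{\perp}$, deduce base-point-freeness from $M^2=0$, apply Stein factorization and adjunction ($\theta^2=K_X\cdot\theta=0$) to get an elliptic fibration, and derive $f$-invariance and uniqueness from the fact that any fiber class must be proportional to the unique invariant isotropic nef class $\theta$. The only cosmetic difference is in the forward implication, where you pull back a degree-$(g+1)$ divisor from the base via the projection formula while the paper composes with a branched cover $C\to\mathbb{P}^1$; both produce the required pencil.
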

\begin{proof}
Let us consider a pair $(X, f)$ and assume that $X$ admits a fibration $X \xrightarrow{\pi} {C}$ invariant by $f$ whose general fiber is a smooth elliptic curve, where $C$ is a smooth algebraic curve of genus $g$. Let us denote by $\beta$ the class of a general fiber $X_z=\pi^{-1}(z)$ in $\mathrm{NS}\,(X)$. Then $f^* \beta=\beta$. The class $\beta$ is obviously nef, so that it is a multiple of $\theta$. This implies that the fibration $(\pi, C)$ is unique: if $\pi$ and $\pi'$ are two distinct $f$-invariant elliptic fibrations, then $\beta . \,\beta' >0$; but $\theta^2=0$. 
\par \medskip
Let $C \xrightarrow{\varphi} \mathbb{P}^1$ be any branched covering (we call $N$ its degree), and let us consider the composition $X\xrightarrow{\varphi \,\circ\, \pi} \mathbb{P}^1$. Let $D$ be a generic fiber of this map. It is a finite union of the fibers of $\pi$, so that the class of $D$ in $\mathrm{NS}\,(X)$ is $N \beta$. Besides, $\mathrm{dim} \, |D| \geq 1$. In fact $\mathrm{dim} \, |D|=1$, otherwise $D^2$ would be positive. This yields the first implication in the proposition.
\par \medskip
To prove the converse implication, let $N$ be a positive integer such that if $N \theta$ can be lifted to a divisor $D$ with $\mathrm{dim} \, |D|=1$. Let us decompose $D$ as $F+M$, where $F$ is the fixed part (so that $|D|=|M|$).
Then $0=D^2=D.F+D.M$ and since $D$ is nef, $D.M=0$. Since $|M|$ has no fixed component, $M^2 \geq 0$ so that the intersection pairing is semi-positive on the vector space generated by $D$ and $M$. It follows that $D$ and $M$ are proportional, so that $M$ is still a lift of a multiple of $\theta$ in $\mathrm{Pic}\,(X)$.
\par \medskip
Since $M$ has no fixed component and $M^2=0$, $|M|$ is basepoint free. By the Stein factorisation theorem, the generic fiber of the associated Kodaira map $X \rightarrow |M|^*$ is the disjoint union of smooth curves of genus $g$. The class of each of these curves in the Neron-Severi group is a multiple of $\theta$. Since $\theta^2=\theta . K_X=0$, the genus formula implies $g=1$. To conclude, we take the Stein factorisation of the Kodaira map to get a true  elliptic fibration.
\par \medskip
It remains to prove that this fibration is $f$-invariant. If $\mathcal{C}$ is a fiber of the fibration, then $f(\mathcal{C})$ is numerically equivalent to $\mathcal{C}$ (since $f^* \theta=\theta$), so that $\mathcal{C}. f(\mathcal{C})=0$. Therefore, $f(\mathcal{C})$ is another fiber of the fibration.
\end{proof}
\begin{remark} \label{malin}
The unicity of the fibration implies that any $f^N$-elliptic fibration (for a positive integer $N$) is $f$-invariant.
\end{remark}
In view of the preceding proposition, it is natural to try to produce sections of $D$ by applying the Riemann-Roch theorem. Using Serre duality, we have
\begin{equation} \label{RR}
\mathrm{h}^0(D)+\mathrm{h}^0(K_X-D) \geq \chi(\mathcal{O}_X)+\frac{1}{2} D.(D-K_X)=\chi({\mathcal{O}_X}).
\end{equation}
In the next section, we will use this inequality to solve the case where the minimal model of $X$ is a $K3$-surface.
\begin{corollary} \label{reduction}
If Theorem \ref{Main} holds for $G=\mathbb{Z}$, then it holds in the general case.
\end{corollary}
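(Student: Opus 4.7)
The strategy is to pick a suitable parabolic element $f$ in $G$, invoke the assumed cyclic case to produce an $f$-invariant elliptic fibration $\pi$, and then show that the full group $G$ preserves $\pi$ by a numerical argument based on the $G$-invariant nef class.

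First I would produce the data. By Proposition \ref{mg}, there is a non-divisible nef class $\theta \in \mathrm{NS}(X)$ fixed by every element of $G$. Since $G$ is infinite and of moderate growth, Corollary \ref{burnside} (applied to the image of $G$ in $\mathrm{O}_+(\mathrm{NS}(X))$) forbids $G$ from consisting entirely of elliptic elements, so $G$ contains at least one parabolic automorphism $f$. By Proposition \ref{tropmalin}, the $G$-invariant class $\theta$ is the unique non-divisible nef class fixed by $f^*$. Applying Theorem \ref{Main} in the assumed cyclic case to the subgroup $\mathbb{Z}f \subset \mathrm{Aut}(X)$ yields a unique $f$-invariant elliptic fibration $\pi : X \to C$; by Proposition \ref{nefnef}, the numerical class of a smooth fiber $F$ of $\pi$ is $N\theta$ for some positive integer $N$.

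The main step is to show that $\pi$ is $G$-invariant. Take any $g \in G$ and let $F$ be a smooth fiber of $\pi$. Since $g^*\theta = \theta$, the curve $g(F)$ has class $(g^{-1})^*[F] = N\theta$ as well, so
\[
F \cdot g(F) = N^2 \,\theta^2 = 0.
\]
If $g(F)$ were not contained in a fiber of $\pi$, then $\pi$ restricted to $g(F)$ would be surjective, which would force a generic fiber of $\pi$ to meet $g(F)$ in a positive number of points, contradicting $F \cdot g(F) = 0$. Hence $\pi \circ g$ is constant on $F$; since this holds for every smooth fiber, $\pi \circ g$ factors through $\pi$, giving an automorphism of $C$. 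Therefore $\pi$ is $G$-invariant. Uniqueness of the $G$-invariant fibration is then immediate: any such fibration is in particular $f$-invariant, and Proposition \ref{nefnef} (applied to $f$) already guarantees uniqueness among $f$-invariant elliptic fibrations.

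The only delicate point is step two, ensuring that the fibers of $\pi$ are genuinely permuted by $g$ rather than merely preserved numerically; but the vanishing $F \cdot g(F) = 0$, combined with the fact that distinct fibers of an elliptic fibration are disjoint, closes this gap cleanly. Everything else is a direct assembly of Proposition \ref{mg}, Corollary \ref{burnside}, and the uniqueness statement in Proposition \ref{nefnef}.
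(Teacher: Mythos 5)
Your proposal is correct and follows essentially the same route as the paper: invoke Proposition \ref{mg} to get a $G$-invariant nef class $\theta$, apply the cyclic case to a parabolic $f\in G$, and conclude that the resulting fibration is $G$-invariant because its fiber class is forced to be a multiple of $\theta$. You merely spell out explicitly the step the paper leaves implicit (the intersection computation $F\cdot g(F)=0$ showing each $g$ permutes the fibers, which is the same argument as at the end of the proof of Proposition \ref{nefnef}).
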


\begin{proof}
Let $G$ be an infinite subgroup of $\mathrm{Aut}\,(X)$ of moderate growth, $f$ be a parabolic element of $X$, and assume that there exists an $f$-invariant elliptic fibration $\mathcal{C}$ on $X$. If $\theta$ is the invariant nef class of $X$, then $G$ fixes $\theta$ by Proposition \ref {mg}. This proves that $\mathcal{C}$ is $G$-invariant.	 
\end{proof}

\subsection{Kodaira's classification} \label{4.3}
Let us take $(X, f)$ as before. The first natural step to classify $(X, f)$ would be to find what is the minimal model of $X$. It turns out that we can rule out some cases without difficulties. Let $\kappa(X)$ be the Kodaira dimension of $X$.
\par \medskip
-- If $\kappa(X)=2$, then $X$ is of general type so its automorphism group is finite. Therefore this case doesn't occur in our study.
\par \bigskip
-- If $\kappa(X)=1$, we can completely understand the situation by looking at the Itaka fibration $X \dasharrow |mK_X|^*$ for $m >> 0$, which is $\mathrm{Aut}\,(X)$-invariant. Let $F$ be the fixed part of $|mK_X|$ and $D=mK_X-F$. 
\begin{lemma} The linear system $|D|$ is a base point free pencil, whose generic fiber is a finite union of elliptic curves. 
\end{lemma}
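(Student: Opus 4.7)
The plan is to invoke the Iitaka fibration theorem: for $\kappa(X) = 1$ and $m$ sufficiently large and divisible, the rational map $\phi_{|mK_X|} \colon X \dashrightarrow |mK_X|^{*}$ has image of dimension $\kappa(X) = 1$, which, after Stein factorization, produces a morphism $\pi \colon X \to B$ to a smooth curve whose generic fiber is a smooth connected curve of Kodaira dimension $0$, i.e.\ an elliptic curve. This fibration is intrinsic, hence $\mathrm{Aut}(X)$-invariant, and it is the fibration we expect $|D|$ to cut out.

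First I would show $D^{2} = 0$. Since $D = mK_X - F$ is the mobile part of $|mK_X|$, it has no fixed component, so two general members $D_1, D_2 \in |D|$ share no components and $D^{2} = D_1 \cdot D_2 \geq 0$. If $D^{2} > 0$, then $D$ would be effective with positive self-intersection, hence big (by Zariski decomposition on a surface), giving $\kappa(X, D) = 2$; but $F$ effective yields $\kappa(X, D) \leq \kappa(X, mK_X) = \kappa(X) = 1$, a contradiction. Base-point-freeness of $|D|$ then follows: two general members $D_1, D_2$ share no components and satisfy $D_1 \cdot D_2 = 0$, which forces each pairwise intersection of their irreducible components to vanish, so $D_1 \cap D_2 = \emptyset$ and the base locus is empty.

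For the pencil structure and elliptic fibers: the base-point-free $|D|$ defines a morphism $\phi \colon X \to |D|^{*}$ with one-dimensional image (since $\kappa(X, D) = 1$), and Stein factorization $\phi = g \circ \pi$ yields the elliptic fibration $\pi \colon X \to B$. Each generic member of $|D|$ is the pullback under $\pi$ of a divisor on $B$, hence a finite disjoint union of fibers of $\pi$; each such fiber $F$ satisfies $F^{2} = 0$ and $K_X \cdot F = 0$ (the latter because $K_X$ is numerically proportional to a fiber on the minimal model), so the genus formula $2 g(F) - 2 = F \cdot (F + K_X) = 0$ yields $g(F) = 1$, making $F$ an elliptic curve.

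The main technical subtlety I foresee is the pencil assertion $\dim |D| = 1$: since $h^{0}(m K_X)$ grows linearly in $m$ when $\kappa(X) = 1$, $|D|$ is in general \emph{not} a pencil, and the author presumably handles this by implicitly replacing $|D|$ by a pencil contained in it (for instance, the pullback of a pencil on $B$), or by tacitly selecting $m$ so that the subtlety is invisible in the case at hand. The rest of the argument is standard surface geometry applied to the output of the Iitaka fibration.
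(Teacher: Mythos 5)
Your argument is essentially correct, but it takes a genuinely different route from the paper. The paper's proof is two lines: if $X$ is minimal it simply cites Griffiths--Harris (pp.~574--575) for the structure of properly elliptic surfaces, and if not it writes $K_X=\pi^*K_Z+E$ for the projection $\pi$ to the minimal model $Z$, so that the mobile part of $|mK_X|$ is the pullback of $|mK_Z|$ and all the asserted properties descend from the minimal case. You instead work intrinsically on $X$: nefness of the mobile part $D$ (no fixed component), the bound $\kappa(X,D)\leq\kappa(X)=1$ forcing $D^2=0$, disjointness of two general members giving base-point-freeness, and the Iitaka fibration theorem for the genus of the fibers. This buys self-containedness on the non-minimal surface at the price of invoking the Iitaka fibration theorem; your fallback genus-formula computation is the one shaky step, since to get $K_X\cdot \mathcal{F}=0$ for a general fiber $\mathcal{F}$ you need the fixed part $F$ of $|mK_X|$ to be vertical (equivalently $D\cdot F=0$), and your parenthetical appeal to ``$K_X$ proportional to a fiber on the minimal model'' quietly re-imports the classification statement you were trying to avoid --- so you should either prove that $F$ is vertical or lean entirely on the Iitaka theorem, which already gives fibers of Kodaira dimension $0$. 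Finally, your observation that $|D|$ is not literally a pencil for $m\gg 0$ (since $\mathrm{h}^0(mK_X)$ grows linearly) is correct and points at a genuine imprecision in the statement that the paper's proof-by-citation does not address; nothing downstream depends on it, as the paper immediately passes to the Stein factorization of the associated map.
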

\begin{proof}
If $X$ is minimal, we refer the reader to \cite[pp. 574-575]{GH}. If $X$ is not minimal, let $Z$ be its minimal model and $X \xrightarrow{\pi} Z$ the projection. Then $K_X=\pi^*K_Z+E$, where $E$ is a divisor contracted by $\pi$, so that $|mK_X|=|mK_Z|=|D|$. 
\end{proof}
We can now consider the Stein factorisation $X \rightarrow Y \rightarrow Z$ of $\pi$. In this way, we get an $\mathrm{Aut} (X)$-invariant elliptic fibration $X \rightarrow Y$.
\par \bigskip
-- If $\kappa(X)=0$, the minimal model of $X$ is either a $K3$ surface, an Enriques surface, or a bielliptic surface. We start by noticing that we can argue directly in this case on the minimal model:
\begin{lemma}
If $\kappa(X)\!=\!0$, every automorphism of $X$ is induced by an automorphism of its minimal model. 
\end{lemma}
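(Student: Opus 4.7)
The plan is to leverage the uniqueness of the minimal model in Kodaira dimension zero, together with the fact that the canonical class of such a minimal model is numerically trivial. Let $\pi \colon X \to Z$ denote the minimal model map and fix $f \in \mathrm{Aut}(X)$. The strategy is to form the birational self-map
\[
\bar f \;:=\; \pi \circ f \circ \pi^{-1} \colon Z \dashrightarrow Z
\]
and to prove that $\bar f$ is regular. Once this is established, the identity $\pi \circ f = \bar f \circ \pi$ exhibits $f$ as the lift of $\bar f$ through $\pi$, which is precisely the statement that $f$ is induced by an automorphism of $Z$.

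To prove that $\bar f$ extends to a biregular automorphism, I would resolve its indeterminacy by a smooth projective surface $W$ equipped with birational morphisms $p, q \colon W \to Z$ such that $\bar f = q \circ p^{-1}$; both $p$ and $q$ are compositions of successive blow-downs of $(-1)$-curves. The key point is to show that every $(-1)$-curve $E \subset W$ is contracted simultaneously by $p$ and by $q$. Granted this, one descends the equality $\bar f = q \circ p^{-1}$ through a common blow-down of $E$ by the universal property of the blow-up, and iterates until $W$ coincides with $Z$ on both sides, so that $\bar f$ is regular.

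The claim about $(-1)$-curves is where the hypothesis $\kappa(X) = 0$ enters, via the fact that $K_Z$ is numerically trivial ($Z$ being a K3, Enriques, or bielliptic surface). Write $K_W = p^* K_Z + F$ with $F$ effective and $p$-exceptional. A $(-1)$-curve $E \subset W$ satisfies $K_W . E = -1$. If $E$ were not $p$-exceptional, the projection formula would give $p^*K_Z . E = K_Z . p_* E = 0$, while $F . E \geq 0$ since $E$ is not a component of $F$. Summing these contributions would force $-1 \geq 0$, a contradiction. Hence $E$ is $p$-exceptional, and the symmetric argument with $q$ in place of $p$ shows that $E$ is also $q$-exceptional.

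The main obstacle is not a computation but a question of what to take as given: the argument relies on the uniqueness of the minimal model for surfaces of non-negative Kodaira dimension and on the numerical triviality of $K_Z$ in the three cases at hand, both classical consequences of the Enriques--Kodaira classification. Beyond $K_Z \equiv 0$, no finer geometric feature of K3, Enriques, or bielliptic surfaces is used.
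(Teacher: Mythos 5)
Your proof is correct, but it follows a genuinely different route from the paper's. The paper works upstairs on $X$ with the pluricanonical system: since $mK_Z \sim 0$ for some $m>0$, one has $mK_X \sim \mathcal{E}$ where $\mathcal{E}$ is effective with support exactly the exceptional locus of $\pi$, and $|mK_X|=\{\mathcal{E}\}$; as $f$ preserves the canonical class it fixes this unique divisor, hence permutes the exceptional curves of $\pi$, and descends. This is a two-line argument, but it uses the full strength of the classification (the canonical bundle of $Z$ is \emph{torsion}, not merely numerically trivial). You instead work downstairs on $Z$: you resolve the indeterminacy of $\bar f = \pi \circ f \circ \pi^{-1}$ and kill every $(-1)$-curve of the resolution by intersecting with $K_W = p^*K_Z + F$. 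Your computation $-1 = K_W.E = K_Z.p_*E + F.E \geq 0$ only uses that $K_Z$ is nef, so your argument in fact proves the stronger, standard statement that any birational self-map of a surface lifted from an automorphism is regular on the minimal model whenever $\kappa(X)\geq 0$ --- this is essentially the uniqueness-of-minimal-models argument. The trade-off is that you invoke more machinery (factorization of birational morphisms into blow-downs, Castelnuovo's contraction criterion, and the terminating induction on Picard rank), where the paper gets away with a single linear system. Both proofs are complete; just note that your appeal to ``uniqueness of the minimal model'' as an input is not actually needed, since you use the same model $Z$ on both sides of $\bar f$, and regularity of $\bar f$ is exactly what your $(-1)$-curve argument establishes rather than something you must assume.
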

\begin{proof}
Let $Z$ be the minimal model of $X$ and $\pi$ be the associated projection. By classification of minimal surfaces of Kodaira dimension zero, there exists a positive integer $m$ such that $mK_Z$ is trivial. Therefore, $mK_X$ is an effective divisor $\mathcal{E}$ whose support is exactly the exceptional locus of $\pi$, and $|mK_X|=\{\mathcal{E}\}$. It follows that $\mathcal{E}$ is invariant by $f$, so that $f$ descends to $Z$.
\end{proof}
\par \medskip
$*$ Let us deal with the K3 surface case.  We pick any lift $D$ of $\theta$ in $\mathrm{Pic}\,(X)$. Since $\chi(\oo_X)=2$, we get by (\ref{RR})
\[
\mathrm{h}^0(D)+\mathrm{h}^0(-D) \geq 2.
\]
Since $D$ is nef, $-D$ cannot be effective, so that $\mathrm{h}^0(-D)=0$. We conclude using Proposition \ref{nefnef}.
\par \medskip
$*$ This argument doesn't work directly for Enriques surfaces, but we can reduce to the K3 case by arguing as follows: if $X$ is an Enriques surface, its universal cover $\widetilde{X}$ is a K3 surface, and $f$ lifts to an automorphism $\tilde{f}$ of $\widetilde{X}$. Besides, $\tilde{f}$ is still parabolic. Therefore, we get an $\tilde{f}$-invariant elliptic fibration $\pi$ on $\widetilde{X}$. 
\par \medskip
If $\sigma$ is the involution on $\widetilde{X}$ such that $X=\widetilde{X}/\sigma$, then $\tilde{f}=\sigma \circ \tilde{f} \circ \sigma^{-1}$, by the unicity of the invariant fibration, $\pi \circ \sigma=\pi$. Thus, $\pi$ descends to $X$.
\par \medskip
$*$ The case of abelian surfaces is straightforward: an automorphism of the abelian surface $\C^2/ \Lambda$ is given by some  matrix $M$ in $\mathrm{GL}(2; \Lambda)$. Up to replacing $M$ by an iterate, we can assume that this matrix is unipotent. If $M= \mathrm{id} + N$, then the image of $N \colon \Lambda \rightarrow \Lambda$ is a sub-lattice $\Lambda^*$ of $\Lambda$ spanning a complex line $L$ in $\mathbb{C}^2$. Then the elliptic fibration $\mathbb{C}^2/ \Lambda \xrightarrow{N} L / \Lambda^*$ is invariant by $M$.
\par \medskip
$*$ It remains to deal with the case of bi-elliptic surfaces. But this is easy because they are already endowed with an elliptic fibration invariant by the whole automorphism group.
\par \bigskip
-- If $\kappa(X)=-\infty$, then either $X$ is a rational surface, or the minimal model of $X$ is a ruled surface over a curve of genus $g \geq 1$. The rational surface case is rather difficult, and corresponds to Gizatullin's result;
we leave it apart for the moment. For blowups of ruled surfaces, we remark that the automorphism group must preserve the ruling. Indeed, for any fiber $\mathcal{C}$, the projection of $f(\mathcal{C})$ on the base of the ruling must be constant since $\mathcal{C}$ has genus zero. Therefore, an iterate of $f$ descends to an automorphism of the minimal model $Z$. 
\par \medskip
We know that $Z$ can be written as $\mathbb{P}(E)$ where $E$ is a holomorphic rank $2$ bundle on the base of the ruling. By the Leray-Hirsh theorem, $\mathrm{H}^{1,1}(Z)$ is the plane generated by the first Chern class $\mathrm{c}_1(\mathcal{O}_E(1))$ of the relative tautological bundle and the pull-back of the fundamental class in $\mathrm{H}^{1, 1}(\mathbb{P}^1)$. Thus, $f^*$ acts by the identity on $\mathrm{H}^{1,1}(Z)$, hence on $\mathrm{H}^{1,1}(X)$.
\section{The rational surface case} \label{5}
\subsection{Statement of the result} \label{5.1}
From now on, $X$ will always be a rational surface, so that $\mathrm{h}^1(X, \mathcal{O}_X)=\mathrm{h}^2(X, \mathcal{O}_X)=0$. It follows that $\mathrm{Pic}\,(X) \simeq \mathrm{NS}\,(X) \simeq \mathrm{H}^2(X, \Z)$, which imply that numerical and linear equivalence agree. In this section, we prove the following result:
\begin{theorem}[\cite{GIZ}] \label{Gizzz}
Let $X$ be a rational surface and $f$ be a parabolic automorphism of $X$. If $\theta$ is the nef $f$-invariant class in $\mathrm{NS}\,(X)$, then there exists an integer $N$ such that $\mathrm{dim}\, |N \theta|=1$.
\end{theorem}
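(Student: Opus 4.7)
The plan is to combine a Riemann--Roch argument with a case analysis on the conormal bundle of an effective representative of $\theta$. First, since $X$ is rational, $\mathrm{Pic}(X) \simeq \mathrm{NS}(X)$, so $\theta$ lifts to a unique divisor class $D$, and by Proposition~\ref{tropmalin} it satisfies $D^2 = 0$ and $K_X \cdot D = 0$. Riemann--Roch together with $\chi(\oo_X) = 1$ gives
\[
h^0(D) + h^0(K_X - D) \geq 1,
\]
and on a rational surface the second term vanishes for any nef $D$: if $K_X - D$ were effective then $K_X = D + (K_X - D)$ would be a sum of a nef and an effective class, hence pseudo-effective, contradicting the uniruledness of $X$. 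Therefore $D$ is effective; fix $C \in |D|$, and note that adjunction yields $p_a(C) = 1$.

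Next I would analyze the components of $C$. For each component $C_i$, nefness of $D$ forces $C \cdot C_i \geq 0$, and summing with multiplicities recovers $C^2 = 0$, so in fact $C \cdot C_i = 0$ for every $i$. Consequently, the conormal bundle $\mathcal{L} := \oo_X(-C)|_C$ has degree zero on each component of $C$, i.e.\ lies in $\mathrm{Pic}^0(C)$. This is exactly the picture of a fiber of a relatively minimal elliptic fibration, and sets up a dichotomy according to whether $\mathcal{L}$ is torsion.

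In the torsion case, let $N$ be the smallest integer with $\mathcal{L}^{\otimes N}$ trivial. From the restriction sequences
\[
0 \to \oo_X((k-1)C) \to \oo_X(kC) \to \oo_C(kC) \to 0,
\]
together with the observation that $h^0(\oo_C(kC)) = 0$ for $1 \leq k < N$ (a non-trivial degree-zero line bundle on a connected curve of arithmetic genus one has no sections) and $h^0(\oo_C(NC)) = 1$, an induction using $\chi(\oo_X(kC)) = 1$ and the vanishing $h^2(\oo_X(kC)) = h^0(K_X - kC) = 0$ forces $h^0(NC) = 2$, i.e.\ $\dim |NC| = 1$, which is the desired conclusion.

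The main obstacle is the non-torsion case, where a direct production of sections is out of reach and one must instead derive a contradiction. Following the road-map of the introduction, I would introduce the restriction morphism $\mathfrak{tr} : \mathrm{Pic}(X) \to \mathrm{Pic}(C)$ and prove, first, that $f^*$ acts with finite order on $\ker(\mathfrak{tr})$, using the negative-definite intersection form on the lattice generated by components of $C$ together with its orthogonal complement (this is essentially Proposition~\ref{torsion}); and then that the non-torsion hypothesis on $\mathcal{L}$ is rigid enough to also constrain $f^*$ on $\mathrm{im}(\mathfrak{tr})$ (Proposition~\ref{elliptic}). Piecing these together (Proposition~\ref{chic}) forces $f^*$ to act with finite order on all of $\mathrm{Pic}(X)$, contradicting Lemma~\ref{fini} since $f$ is parabolic; hence the non-torsion case cannot occur. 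The delicate step is the second one: controlling the action of $f^*$ on a subgroup lying inside the abelian variety $\mathrm{Pic}^0(C)$, where intersection-theoretic tools are no longer available and one must exploit the non-triviality of $\mathcal{L}$ as a genuinely transcendental rigidity input. That piece of bookkeeping --- the ``succession of clever tricks'' advertised in the introduction --- is the main obstacle of the proof.
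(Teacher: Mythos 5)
Your proposal follows essentially the same route as the paper: represent $\theta$ by an effective divisor $C$ via Riemann--Roch, split on whether the (co)normal bundle of $C$ is torsion in $\mathrm{Pic}^0(C)$, produce sections of $|NC|$ directly in the torsion case, and in the non-torsion case use the trace morphism to force $f^*$ to act with finite order on $\mathrm{Pic}\,(X)$ --- the paper merely packages this as a single proof by contradiction (Assumption (1) together with Proposition \ref{torsion}) rather than as a dichotomy. The only points you gloss over are the reduction to the case where no iterate of $f$ descends to a blow-down (the paper's Assumption (2)), which is what guarantees $K_X\cdot C_i=0$ and hence $\mathrm{h}^1(C,\oo_C)=1$, and the $1$-connectedness of $C$ needed to apply \cite[Lemma 12.2]{BPVDV} in your torsion-case induction.
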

Thanks to Proposition \ref{mg} and Corollary \ref{reduction}, this theorem is equivalent to Theorem \ref{Main} for rational surfaces and is the most difficult result in Gizatullin's paper.
%
%
\subsection{Properties of the invariant curve} \label{5.2}
The divisor ${K}_X-\theta$ is never effective. Indeed, if $H$ is an ample divisor, $K_X. H <0$ so that $(K_X-\theta).H <0$. Therefore, we obtain by (\ref{RR}) that $|\theta| \neq \varnothing$, so that $\theta$ can be represented by a possibly non reduced and non irreducible curve $C$. We will write the curve $C$ as the divisor $\sum_{i=1}^d a_i \,C_i$ where the $C_i$ are irreducible. Since $\theta$ is non divisible in $\mathrm{NS}\,(X)$, $C$ is primitive. 
\par \medskip
In the sequel, we will make the following assumptions, and we are seeking for a contradiction:
\par \medskip
\textbf{Assumptions}
\begin{itemize}
\item[(1)] We have $|N \theta|=\{NC\}$ for all positive integers $N$.
\item[(2)] For any positive integer $k$, the pair $(X, f^k)$ is minimal.
\end{itemize}
\par \medskip
Let us say a few words on $(2)$. If for some integer $k$ the map $f^k$ descends to an automorphism $g$ of a blow-down $Y$ of $X$, then we can still argue with $(Y, g)$. The corresponding invariant nef class will satisfy $(1)$. Thanks to Remark \ref{malin}, we don't lose anything concerning the fibration when replacing $f$ by an iterate.
\par \medskip
We study thoroughly the geometry of $C$. Let us start with a simple lemma.
\begin{lemma} \label{ttsimple}
If $D_1$ and $D_2$ are two effective divisors whose classes are proportional to $\theta$, then $D_1$ and $D_2$ are proportional (as divisors).
\end{lemma}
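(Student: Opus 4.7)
The plan is to extract everything from assumption (1). The lemma is essentially a one-line consequence of the hypothesis $|N\theta|=\{NC\}$, together with the fact that $\theta$ is non-divisible in $\mathrm{NS}\,(X)$.

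First I would argue that if $D$ is any nonzero effective divisor whose class lies in the real line $\R\,\theta \subset \mathrm{NS}\,(X)_{\R}$, then in fact $[D]=m\,\theta$ for a strictly positive integer $m$. Integrality is automatic because $\theta$ is non-divisible in the lattice $\mathrm{NS}\,(X)$ (Proposition \ref{tropmalin}), so the unique real scalar $c$ with $[D]=c\,\theta$ must be an integer. Positivity of $c$ comes from pairing with an ample class $H$: since $D$ is nonzero and effective, $D.H>0$, while $\theta.H>0$ since $\theta$ is a nonzero nef class; hence $c>0$.

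Applying this to $D_1$ and $D_2$ (the case $D_i=0$ being trivial), we get $[D_i]=m_i\,\theta$ with $m_i>0$. Since we are on a rational surface, numerical and linear equivalence coincide, so $D_i \in |m_i\,\theta|$. By assumption (1), $|m_i\,\theta|=\{m_i C\}$, which forces the divisorial equalities $D_1=m_1 C$ and $D_2=m_2 C$. Therefore $m_2 D_1 = m_1 m_2 C = m_1 D_2$, which exhibits $D_1$ and $D_2$ as proportional divisors.

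There is no real obstacle here: the statement is purely a bookkeeping consequence of the standing assumption (1) combined with the primitivity of $\theta$. The only thing to take care of is the (trivial) verification that the scalar relating $[D]$ to $\theta$ is a positive integer rather than an arbitrary real, which is why I isolate that step explicitly.
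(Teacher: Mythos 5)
Your proof is correct and follows essentially the same route as the paper: both arguments reduce the statement to the standing assumption that $|N\theta|=\{NC\}$ is a singleton. The only cosmetic difference is that you invoke the primitivity of $\theta$ to pin down $D_i=m_iC$ exactly, whereas the paper simply clears denominators to put $N_1D_1$ and $N_2D_2$ in the same linear system $|N\theta|$ and concludes they are equal.
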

\begin {proof} There exists integers $N$, $N_1$, and $N_2$ such that $N_1 D_1 \equiv N_2 D_2 \equiv N \theta$. Therefore, $N_1 D_1$ and $N_2 D_2$ belong to $|N \theta|$ so they are equal.
\end{proof}
The following lemma proves that $C$ looks like a fiber of a minimal elliptic surface.
\begin{lemma} \label{base}  $ $
\begin{enumerate} 
\item For $1 \leq i \leq d$, $K_X. C_i=0$ and $C.C_i=0$. If $d \geq 2$, $C_i^2<0$.
\item The classes of the components $C_i$ in $\mathrm{NS}\,(X)$ are linearly independent.
\item The intersection form is nonpositive on the $\Z$-module spanned by the $C_i$'s.
\item If $D$ is a divisor supported in $C$ such that $D^2=0$, then $D$ is a multiple of $C$.
\end{enumerate}
\end{lemma}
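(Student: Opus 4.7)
I would prove the four assertions in the order (1), (3), (2), (4), using Hodge index on $\theta^{\perp}$ together with Lemma \ref{ttsimple} as the main tool, and invoking the minimality assumption (2) only at one crucial point.

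For (1), nefness of $\theta = [C]$ gives $C \cdot C_i \geq 0$, and summing $\sum_i a_i (C \cdot C_i) = C^2 = 0$ with $a_i > 0$ forces $C \cdot C_i = 0$ for every $i$. When $d \geq 2$, expanding this identity gives $a_i C_i^2 = -\sum_{j \neq i} a_j (C_j \cdot C_i) \leq 0$, so $C_i^2 \leq 0$. If $C_i^2 = 0$, then $C_i$ would be disjoint from $C'' := \sum_{j \neq i} a_j C_j$, hence $(C'')^2 = 0$ and $[C''] \in \theta^{\perp}$; Hodge index yields $[C''] \in \R_{>0}\,\theta$, and Lemma \ref{ttsimple} would force $C''$ proportional to $C$, impossible because $C_i$ appears in $C$ but not in $C''$. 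For $K_X \cdot C_i = 0$: by assumption (1) with $N=1$, $|C| = \{C\}$, so $f^* C = C$, meaning $f$ permutes the components $C_i$; letting $k$ be the order of this permutation, $f^k$ fixes every $C_i$ setwise. If some $K_X \cdot C_i < 0$, then adjunction $K_X \cdot C_i + C_i^2 = 2p_a(C_i) - 2 \geq -2$ combined with $C_i^2 \leq -1$ forces $K_X \cdot C_i = -1$, $C_i^2 = -1$, $p_a(C_i) = 0$, so $C_i$ is a $(-1)$-curve; equivariant contraction of $C_i$ under $f^k$ then contradicts assumption (2). The case $d = 1$ is trivial as $C_1 = C$ by primitivity. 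Hence $K_X \cdot C_i \geq 0$, and $\sum a_i (K_X \cdot C_i) = K_X \cdot C = 0$ gives equality for each $i$.

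For (3), any $D = \sum b_i C_i$ satisfies $[D] \cdot \theta = \sum b_i (C_i \cdot C) = 0$, so $[D] \in \theta^{\perp}$, on which Hodge index makes the intersection form negative semi-definite; thus $D^2 \leq 0$. For (2), assume $\sum c_i [C_i] = 0$ nontrivially. If all $c_i$ share sign, the corresponding effective divisor $D = \pm \sum c_i C_i$ would have zero class, contradicting $D \cdot H > 0$ for an ample $H$; otherwise split $D = D^+ - D^-$ into nonzero effective divisors with no common component, so $[D^+] = [D^-]$ yields $(D^+)^2 = D^+ \cdot D^- \geq 0$, which together with (3) forces $(D^+)^2 = 0$. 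Hodge index then gives $[D^+] \in \R_{>0}\,\theta$, and Lemma \ref{ttsimple} makes $D^+$ proportional to $C$ as a divisor, contradicting the fact that $D^+$ omits every component $C_j$ with $c_j < 0$. For (4), if $D = \sum b_i C_i$ satisfies $D^2 = 0$, then $[D] \in \theta^{\perp}$ is isotropic, hence $[D] = \lambda [C]$ by Hodge index; then $\sum (b_i - \lambda a_i)[C_i] = 0$ in $\mathrm{NS}\,(X)$, and (2) yields $b_i = \lambda a_i$, so $D = \lambda C$ as divisors.

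The only delicate point is the vanishing $K_X \cdot C_i = 0$: everything else reduces cleanly to Hodge-theoretic manipulations on $\theta^{\perp}$ together with Lemma \ref{ttsimple}, but this step genuinely requires both the permutation action of $f$ on the components of $C$ (to pass to an iterate fixing each $C_i$) and the minimality assumption (2) (to rule out $(-1)$-curves among the components).
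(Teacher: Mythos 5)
Your proposal is correct and follows essentially the same route as the paper: Hodge index on $\theta^{\perp}$ together with Lemma \ref{ttsimple} for the proportionality statements, and the minimality assumption to exclude $f$-invariant $(-1)$-curves among the components. The only cosmetic differences are that you derive $C\cdot C_i=0$ directly from nefness and $D^2\le 0$ directly from Hodge index where the paper routes both through Lemma \ref{lemmenef}, and you deduce (4) from (2) rather than from Lemma \ref{ttsimple} (which incidentally handles non-effective $D$ a bit more cleanly).
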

\begin{proof} $ $
\begin{enumerate}

\item Up to replacing $f$ by an iterate, we can assume that all the components $C_i$ of the curve $C$ are fixed by $f$. By Lemma \ref{lemmenef}, $C_i^2\leq 0$ and $C.K_X=C.C_i=0$ for all $i$. Assume that $d \geq 2$. If $C_i^2=0$, then $C$ and $C_i$ are proportional, which would imply that $C$ is divisible in $\mathrm{NS}\,(X)$. Therefore $C_i^2<0$. If $K_X.C_i<0$, then $C_i$ is a smooth and $f$-invariant exceptional rational curve. This contradicts Assumption (2). Thus $K_X.C_i \geq 0$. Since $K_X.C=0$, it follows that $K_X.C_i=0$ for all $i$.
\par \medskip
\item If there is a linear relation among the curves $C_i$, we can write it as $D_1 \equiv D_2$, where $D_1$ and $D_2$ are linear combinations of the $C_i$ with positive coefficients (hence effective divisors) having no component in common. We have $D_1^2=D_1.D_2 \geq 0$. On the other hand $C. D_1=0$ and $C^2=0$, so by the Hodge index theorem $C$ and $D_1$ are proportional. This contradicts Lemma \ref{ttsimple}.
\par \medskip
\item Any divisor $D$ in the span of the $C_i$'s is $f$-invariant, so that Lemma \ref{lemmenef} (1) yields $D^2 \leq 0$.
\par \medskip
\item If $D^2=D.C=0$, then $D$ and $C$ are numerically proportional. Therefore, there exists two integers $a$ and $b$ such that $aD-bC \equiv 0$. By Lemma \ref{ttsimple}, $aD=bC$ and since $C$ is primitive, $D$ is a multiple of $C$.
\end{enumerate}
\end{proof}

\begin{lemma} \label{genre} $ $
\begin{enumerate} 
\item The curve $C$ is $1$-connected \emph{(}see \cite[pp. 69]{BPVDV}\emph{)}.
\item We have $\mathrm{h}^0(C, \oo_C)=\mathrm{h}^1(C, \oo_C)=1$.
\item If $d=1$, then $C_1$ has arithmetic genus one. If $d \geq2$, all the curves $C_i$ are rational curves of self-intersection $-2$.
\end{enumerate}
\end{lemma}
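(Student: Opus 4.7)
The main input is (1); parts (2) and (3) follow by standard Riemann--Roch and adjunction computations once the $1$-connectedness is in hand, so I expect (1) to be the only delicate point. To prove it, I would take a decomposition $C=A+B$ with $A,B>0$ (necessarily supported on $C$) and expand $0=C^2=A^2+2\,A\cdot B+B^2$. Since $A$ and $B$ lie in the $\Z$-span of the components $C_i$, Lemma \ref{base}(3) gives $A^2,B^2\leq 0$, hence $A\cdot B\geq 0$. To upgrade this to $A\cdot B\geq 1$, I would rule out the equality case: $A\cdot B=0$ would force $A^2=B^2=0$, and Lemma \ref{base}(4) would then make both $A$ and $B$ positive integer multiples of the primitive divisor $C$, which is incompatible with $A+B=C$ since each summand would already be $\geq C$.

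For (2), I would invoke the standard fact that a $1$-connected effective divisor on a smooth surface satisfies $\mathrm{h}^0(\oo_C)=1$ (cf.\ \cite[I.12.3]{BPVDV}). The $\mathrm{h}^1$ computation then reduces to computing $\chi(\oo_C)$ via adjunction on $X$:
\[
\chi(\oo_C)=-\tfrac{1}{2}\,C\cdot(C+K_X)=0,
\]
using $C^2=K_X\cdot C=0$ from Proposition \ref{tropmalin}; combined with the previous equality this forces $\mathrm{h}^1(C,\oo_C)=1$.

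For (3), the case $d=1$ is immediate from primitivity: $a_1=1$, hence $C=C_1$, whose arithmetic genus is $1$ by (2). If $d\geq 2$, I would apply adjunction to each irreducible component: since $K_X\cdot C_i=0$ and $C_i^2<0$ by Lemma \ref{base}(1), the formula $2p_a(C_i)-2=C_i^2$ yields $p_a(C_i)=0$ and $C_i^2=-2$, and an integral curve with $p_a=0$ is automatically a smooth rational curve.
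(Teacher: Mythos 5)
Your proposal is correct and follows essentially the same route as the paper: the same decomposition argument via Lemma \ref{base}(3) and (4) for $1$-connectedness, the same appeal to \cite[Corollary 12.3]{BPVDV} for $\mathrm{h}^0(C,\oo_C)=1$, and the same genus-formula computation for (3). The only cosmetic difference is in (2), where you obtain $\chi(\oo_C)=-\tfrac{1}{2}\,C.(C+K_X)=0$ directly from Riemann--Roch on $X$, whereas the paper reaches the same conclusion by a slightly longer detour through the dualizing sheaf $\omega_C$ and Serre duality; both are valid and rest on the same numerical facts $C^2=K_X.C=0$.
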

\begin{proof} $ $
\begin{enumerate}
\item Let us write $C=C_1+C_2$ where $C_1$ and $C_2$ are effective and supported in $C$, with possible components in common. By Lemma \ref{base} (3), $C_1^2\leq 0$ and $C_2^2\leq 0$. Since $C^2=0$, we must have $C_1.C_2 \geq 0$. If $C_1.C_2=0$, then $C_1^2=C_2^2=0$ so that by Lemma \ref{base} (4), $C_1$ and $C_2$ are multiples of $C$, which is impossible.
\par \medskip
\item By $(1)$ and \cite[Corollary 12.3] {BPVDV}, $\mathrm{h}^0(C, \oo_C)=1$. The dualizing sheaf $\omega_C$ of $C$ is the restriction of the line bundle  $K_X+C$ to the divisor $C$. Therefore, for any integer $i$ between $1$ and $d$, $\mathrm{deg}\, ({\omega_C})_{| C_i}=(K_X+C).C_i=0$ by Lemma \ref{base} (1). Therefore, by \cite[Lemma 12.2] {BPVDV}, $\mathrm{h}^0(C, \omega_C)\leq 1$ with equality if and only if $\omega_C$ is trivial.
We can now apply the Riemann-Roch theorem for singular embedded curves \cite[Theorem 3.1]{BPVDV}: since $\omega_C$ has total degree zero, we have $\chi(\omega_C)=\chi(\oo_C)$. But using Serre duality \cite[Theorem 6.1]{BPVDV}, $\chi(\omega_C)=-\chi(\oo_C)$ so that $\chi(\oo_C)=\chi(\omega_C)=0$. It follows that $\mathrm{h}^1(C, \oo_C)=1$.
\par \medskip
\item This follows from the genus formula: $2p_a(C_i)-2=C_i^2+K_X.C_i=C_i^2<0$ so that $p_a(C_i)=0$ and $C_i^2=-2$. Now the geometric genus is always smaller than the arithmetic genus, so that the geometric genus of $C_i$ is $0$, which means that $C_i$ is rational.

\end{enumerate}
\end{proof}

We can now prove a result which will be crucial in the sequel:
\begin{proposition} \label{crucial}
Let $D$ be a divisor on $X$ such that $D. C=0$. Then there exists a positive integer $N$ and a divisor $S$ supported in $C$ such that for all $i$, $(ND-S).\, C_i=0$.
\end{proposition}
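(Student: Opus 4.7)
The plan is to phrase the problem as solving a rational linear system coming from the Gram matrix of the components of $C$. Let $V \subset \mathrm{NS}\,(X)_{\Q}$ denote the $\Q$-subspace spanned by the classes $C_1, \ldots, C_d$, and let $M$ be the Gram matrix $M_{ij} = C_i . C_j$. Finding $S = \sum_j t_j C_j$ supported in $C$ with $(ND-S).C_i = 0$ for every $i$ amounts to solving the linear system $M \mathbf{t} = N \mathbf{d}$, where $\mathbf{d} = (D.C_1, \ldots, D.C_d)$. Over $\Q$, such a solution exists (for some positive integer $N$) if and only if $\mathbf{d}$ lies in the image of $M$ over $\Q$; once a rational solution is found, clearing denominators yields an integer $N$ and an integer vector $\mathbf{t}$.

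The first main step is therefore to pin down the kernel (hence the image) of $M$. The vector $\mathbf{a}=(a_1,\ldots,a_d)$ lies in $\ker M$ because $C.C_i=0$ for every $i$ by Lemma \ref{base} (1). Conversely, given any rational $\mathbf{x} \in \ker M$, the divisor $D' = \sum_i x_i C_i$ satisfies $D'.C_i=0$ for all $i$, hence $(D')^2 = \sum_i x_i (D'.C_i) = 0$. After clearing denominators, $D'$ becomes an honest integral divisor supported in $C$ with square zero, so Lemma \ref{base} (4) forces it to be a multiple of $C$. Thus $\ker M = \Q \mathbf{a}$.

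Next, since $M$ is symmetric, the image of $M \colon \Q^d \to \Q^d$ is the orthogonal complement of its kernel with respect to the standard dot product, so
\[
\mathrm{im}\,M \; = \; \{\mathbf{w}\in\Q^d : \textstyle\sum_i a_i w_i = 0\}.
\]
The hypothesis $D.C=0$ reads $\sum_i a_i (D.C_i) = 0$, i.e.\ exactly $\mathbf{d}\in \mathrm{im}\,M$. Hence there is a rational solution $\mathbf{t}_{\Q}$ to $M\mathbf{t}_{\Q} = \mathbf{d}$. Choosing $N$ to be a common denominator of the coordinates of $\mathbf{t}_{\Q}$, we set $\mathbf{t} = N \mathbf{t}_{\Q} \in \Z^d$ and $S = \sum_j t_j C_j$; by construction $S.C_i = N D.C_i$ for every $i$, which is the claim. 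The argument is pure linear algebra once the structure of the intersection form on $V$ is identified, and the only nontrivial input is Lemma \ref{base} (4), used to cut down $\ker M$ to a line.
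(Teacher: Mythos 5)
Your proposal is correct and follows essentially the same route as the paper: both identify the relevant linear map (your Gram matrix $M$ is the paper's map $\lambda(x)=(x.C_1,\ldots,x.C_d)$ restricted to the basis of components), both use Lemma \ref{base} (4) to show the kernel is exactly the line spanned by the coefficient vector of $C$, and both conclude that $(D.C_1,\ldots,D.C_d)$ lies in the image precisely because $D.C=0$. The only cosmetic difference is that you identify the image as $(\ker M)^{\perp}$ via symmetry of $M$, while the paper gets the same hyperplane by a dimension count plus the inclusion $\mathrm{im}\,\lambda\subseteq\{\sum_i a_i x_i=0\}$.
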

\begin{proof}
Let $V$ be the $\Q$-vector space spanned by the $C_i$'s in $\mathrm{NS}_{\Q}(X)$, by Lemma \ref{base} (3), it has dimension $r$. We have a natural morphism $\lambda \colon V \rightarrow \Q^r$ defined by $\lambda(x)=(x.C_1, \ldots, x.C_r)$. The kernel of this morphism are vectors in $V$ orthogonal to all the $C_i$'s. Such a vector is obviously isotropic, and by Lemma \ref{base} (4), it is a rational multiple of $D$. Therefore the image of $\lambda$ is a hyperplane in $\Q^r$, which is the hyperplane $\sum_i a_i x_i=0$. Indeed, for any element $x$ in $V$, we have $\sum_i a_i \, (x.C_i)=x.C=0$.
\par \medskip
Let us consider the element $w=(D.C_1, \ldots, D.C_r)$ in $\Q^r$. Since $\sum_i a_i \, (D.C_i)=D.C=0$, we have $w=\lambda(S)$ for a certain $S$ in $V$. This gives the result. 
\end{proof}
\subsection{The trace morphism} \label{5.3}
In this section, we introduce the main object in Gizatullin's proof: the \textit{trace morphism}. For this, we must use the Picard group of the embedded curve $C$. It is the moduli space of line bundles on the complex analytic space $\mathcal{O}_C$, which is $\mathrm{H}^1(C, \oo_C^{\,\times})$. 
\par \medskip
Recall \cite[Proposition 2.1]{BPVDV} that $\mathrm{H}^1(C, \Z_C)$ embeds as a discrete subgroup of $\mathrm{H}^1(C, \oo_C)$. The connected component of the line bundle $\oo_C$ is denoted by $\mathrm{Pic}^0 (C)$, it is the abelian complex Lie group $\mathrm{H}^1(C, \oo_C)/ \mathrm{H}^1(C, \Z_C)$. We have an exact sequence
\[
0 \rightarrow \mathrm {Pic}^0(C) \rightarrow \mathrm{Pic}\,(C) \xrightarrow{\mathrm{c}_1} \mathrm{H}^1(C, \Z)
\]
and $\mathrm{H}^1 (C, \mathbb{Z})\simeq \Z^d$. Therefore, connected components of $\mathrm{Pic}\,(C)$ are indexed by sequences $(n_1, \ldots, n_d)$ corresponding to the degree of the line bundle on each irreducible component of $C$.
By Lemma \ref{genre} (2), $\mathrm{Pic}^0(C)$ can be either $\C$, $\C^{\times}$, or an elliptic curve.
\par \medskip
The trace morphism is a group morphism $\mathfrak{tr} \colon \mathrm{Pic}\,(X) \rightarrow \mathrm{Pic}\, (C)$ defined by $ \mathfrak{tr}\, (\mathcal{L})=\mathcal{L}_{| C}$ . Remark that $C.C_i=0$ for any $i$, so that the line bundle $\oo_X\left(C\right)$ restricts to a line bundle of degree zero on each component $a_i \,C_i$.
\begin{proposition} $ $ \label{torsion}
\begin{enumerate}
\item The line bundle $\mathfrak{tr}\left(\oo_X (C ) \right)$ is not a  torsion point in $\mathrm{Pic}^0(C)$.
\item The intersection form is negative definite on $\mathrm{ker}\, (\mathfrak{tr})$.
\end{enumerate}
\end{proposition}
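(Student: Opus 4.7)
The plan is to prove (1) via a Riemann--Roch argument that exploits Assumption (1) of \S \ref{5.2}, and then deduce (2) from (1) through the Hodge index theorem applied to the class of $C$.

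For (1), I would argue by contradiction: suppose $\mathfrak{tr}(\oo_X(C))$ is an $N$-torsion element of $\mathrm{Pic}^0(C)$, so that $\oo_C(NC) \simeq \oo_C$. The crux is to establish the vanishing $\mathrm{h}^1(X, \oo_X(kC)) = 0$ for every $k \geq 1$. Riemann--Roch combined with $C^2 = 0$ and $K_X \cdot C = 0$ from Lemma \ref{base} (1) gives $\chi(\oo_X(kC)) = \chi(\oo_X) = 1$. Serre duality rewrites $\mathrm{h}^2(\oo_X(kC))$ as $\mathrm{h}^0(K_X - kC)$, and this vanishes because $K_X - \theta$ is non-effective (opening of \S \ref{5.2}) while $(k-1)C$ is effective, so $K_X - kC = (K_X - C) - (k-1)C$ is non-effective as well. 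Together with $\mathrm{h}^0(\oo_X(kC)) = 1$ from Assumption (1), this yields the claimed $\mathrm{h}^1$-vanishing. Feeding it into the long exact sequence attached to
\[
0 \to \oo_X((N-1)C) \to \oo_X(NC) \to \oo_C(NC) \to 0
\]
turns the $\mathrm{H}^0$ portion into a short exact sequence, forcing $\mathrm{h}^0(\oo_C(NC)) = 0$. But the torsion hypothesis together with $\mathrm{h}^0(\oo_C) = 1$ from Lemma \ref{genre} (2) gives $\mathrm{h}^0(\oo_C(NC)) = 1$, a contradiction. The interplay between Riemann--Roch, the non-effectivity of $K_X - kC$, and Assumption (1) is in my view the main obstacle.

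For (2), let $D$ be a divisor whose class lies in $\ker(\mathfrak{tr})$. Then $\oo_X(D)$ restricts trivially to each component $C_i$, whence $D \cdot C_i = 0$ and therefore $D \cdot C = 0$. Since $C^2 = 0$ and the intersection form on $\mathrm{NS}\,(X)_{\R}$ has signature $(1, r-1)$, the Hodge index theorem yields $D^2 \leq 0$, with equality only if $D$ and $C$ are proportional in $\mathrm{NS}\,(X)_{\R}$. In the equality case, the rationality of both classes produces a nonzero integer $a$ and an integer $b$ with $aD = bC$ in $\mathrm{Pic}\,(X) \simeq \mathrm{NS}\,(X)$; applying $\mathfrak{tr}$ exhibits $\mathfrak{tr}(\oo_X(C))$ as a $b$-torsion element of $\mathrm{Pic}^0(C)$, so $b = 0$ by part (1), and the torsion-freeness of $\mathrm{Pic}\,(X)$ forces $D = 0$. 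Hence the intersection form is negative definite on $\ker(\mathfrak{tr})$.
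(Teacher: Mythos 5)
Your proof is correct and follows essentially the same route as the paper: the same restriction exact sequence, Riemann--Roch with $C^2=K_X\cdot C=0$, Serre duality, and Assumption (1) for part (1), and the Hodge index theorem combined with part (1) for part (2). The only (cosmetic) difference in part (1) is that you use Assumption (1) upfront to get $\mathrm{h}^1(X,\oo_X(kC))=0$ and then contradict $\mathrm{h}^0(C,\oo_C)=1$, whereas the paper derives $\mathrm{h}^1(X,\oo_X(NC))\geq 1$ from the surjection onto $\mathrm{H}^1(C,\oo_C)$ and then contradicts Assumption (1) via $\mathrm{h}^0(X,\oo_X(NC))\geq 2$.
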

\begin{proof} $ $
\begin{enumerate}
\item Let $N$ be an integer such that $N\mathfrak{tr}\left(\oo_X (C ) \right)=0$ in $\mathrm{Pic}\,(C)$. Then we have a short exact sequence 
\[
0 \rightarrow \oo_X((N-1)C) \rightarrow \oo_X (NC) \rightarrow \oo_C \rightarrow 0.
\]
\par \smallskip
\noindent Now $\mathrm{h}^2(X, \oo_X((N-1)C))=\mathrm{h}^0(\oo_X(-(N-1)C+K_X)=0$, so that the map
\[
\mathrm{H}^1(X, \oo_X(NC)) \rightarrow \mathrm{H}^1(C, \oo_C)
\]
is onto. It follows from Lemma \ref{genre} (2) that $\mathrm{h}^1(X, \oo_X(NC)) \geq 1$ so that by Riemann-Roch
\[
\mathrm{h}^0(X, \oo_X(NC)) \geq \mathrm{h}^1(X, \oo_X(NC)) + \chi(\oo_X) \geq 2.
\]
This yields a contradiction since we have assumed that $|N\theta|=\{NC\}$.
\par \medskip
\item Let $D$ be a divisor in the kernel of $\mathfrak{tr}$. By the Hodge index theorem $D^2 \leq 0$. Besides, if $D^2=0$, then $D$ and $C$ are proportional. In that case, a multiple of $C$ would be in $\ker \,(\mathfrak{tr})$, hence $\mathfrak{tr}\, (\oo_X(C))$ would be a torsion point in $\mathrm{Pic}\,(C)$. 
\end{enumerate}
\end{proof}
\section{Proof of Gizatullin's theorem} \label{6}
\subsection{The general strategy} \label{6.1}
The strategy of the proof is simple in spirit. Let $\mathfrak{P}$ be the image of $\mathfrak{tr}$ in $\mathrm{Pic}\, (C)$, so that we have an exact sequence of abelian groups
\[
1 \rightarrow \ker \, (\mathfrak{tr}) \rightarrow \mathrm{Pic}\, (X) \rightarrow \mathfrak{P} \rightarrow 1
\]
By Proposition \ref{torsion}, the intersection form is negative definite on $\ker \,(\mathfrak{tr})$, so that $f^*$ is of finite order  on $\ker \,(\mathfrak{tr})$. In the first step of the proof, we will prove that for any divisor $D$ on $X$ orthogonal to $C$, $f^*$ induces a morphism of finite order on each connected component of any element $\mathfrak{tr} (D)$ in $\mathrm{Pic}\,(C)$. 
\par \medskip
In the second step, we will prove that the action of $f^*$ on $\mathrm{Pic} (X)$ is finite. This will give the desired contradiction.
\subsection{Action on the connected components of $\mathfrak{P}$} \label{6.2}
In this section, we prove that $f^*$ acts finitely on "many" connected components of $\mathfrak{P}.$ More precisely:\begin{proposition} \label{elliptic}
Let $D$ be in $\mathrm{Pic}\, (X)$ such that $D.C=0$, and let $\mathfrak{X}_D$ be a connected component of $\mathfrak{tr} (D)$ in $\mathrm{Pic}\, (C)$. Then the restriction of $f^*$ to $\mathfrak{X}_D$ is of finite order.
\end{proposition}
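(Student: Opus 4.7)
The plan is to decompose the analysis of $f^{*}$ on $\mathfrak{X}_D$ into a linear part (an automorphism of $\mathrm{Pic}^{0}(C)$) and a translation part, show the first is trivial and the second is torsion.

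Passing to a suitable power of $f$, I may assume every component $C_i$ of $C$ is $f$-fixed: since $f^{*}\theta = \theta$ and assumption $(1)$ give $|\theta|=\{C\}$, the effective divisor $f^{*}C$ must equal $C$, so $f$ merely permutes the $C_i$, and an iterate kills that permutation. Then $f^{*}S=S$ for every divisor $S$ supported on $C$, $(f_{|C})^{*}$ preserves the component $\mathfrak{X}_D \subset \mathrm{Pic}(C)$, and on the torsor $\mathfrak{X}_D = \mathfrak{tr}(D)+\mathrm{Pic}^{0}(C)$ it acts as an affine map with linear part $\Phi := (f_{|C})^{*}|_{\mathrm{Pic}^{0}(C)}$ and translation $t_D := \mathfrak{tr}(f^{*}D-D) \in \mathrm{Pic}^{0}(C)$.

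\emph{First step: $\Phi = \mathrm{id}$.} By Lemma \ref{genre}(2), $\mathrm{Pic}^{0}(C)$ is a connected one-dimensional commutative complex algebraic group, hence isomorphic to $\mathbb{G}_a$, $\mathbb{G}_m$, or an elliptic curve. The element $\mathcal{O}_C(C) = \mathfrak{tr}(\mathcal{O}_X(C))$ lies in $\mathrm{Pic}^{0}(C)$ (because $C.C_i=0$ by Lemma \ref{base}(1)), is fixed by $\Phi$ (because $f^{*}C=C$), and is non-torsion by Proposition \ref{torsion}(1). A case-by-case check then forces $\Phi=\mathrm{id}$: on $\mathbb{G}_a$ the algebraic automorphisms are the scalings $z\mapsto az$ and fixing a nonzero element requires $a=1$; on $\mathbb{G}_m$ the only nontrivial algebraic automorphism is inversion, whose fixed points $\pm 1$ are torsion; on an elliptic curve $E$, any nontrivial automorphism $\sigma$ makes $\sigma-\mathrm{id}$ a nonzero isogeny with finite kernel, ruling out non-torsion fixed points.

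\emph{Second step: $t_D$ is torsion.} Proposition \ref{crucial} applied to $D$ gives an integer $N>0$ and a divisor $S$ supported on $C$ such that $D' := ND - S$ satisfies $D'.C_i=0$ for every $i$. Thus $\mathfrak{tr}(D') \in \mathrm{Pic}^{0}(C)$, so by the first step $\mathfrak{tr}(f^{*}D' - D')=0$. But $f^{*}S=S$, whence $f^{*}D'-D' = N(f^{*}D - D)$, and applying $\mathfrak{tr}$ yields $N\cdot t_D = 0$. Consequently $f^{*}$ acts on $\mathfrak{X}_D$ as translation by an $N$-torsion element, hence with order dividing $N$. The principal obstacle is the first step: trivialising $\Phi$ is the place where the non-torsion statement Proposition \ref{torsion}(1) is essential and where the classification of one-dimensional algebraic groups is used; once $\Phi=\mathrm{id}$ is in hand, the torsion of $t_D$ is a direct consequence of Proposition \ref{crucial}.
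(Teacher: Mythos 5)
Your proof is correct and follows essentially the same route as the paper: your affine decomposition into a linear part on $\mathrm{Pic}^0(C)$ and a translation repackages the paper's Claims 1--3, the case analysis of $\mathrm{Pic}^0(C)$ anchored on the non-torsion fixed class $\mathfrak{tr}(\oo_X(C))$ from Proposition \ref{torsion} is exactly the paper's first step, and Proposition \ref{crucial} produces the $N$-torsion of the translation just as in the paper's Claim 3. The only (harmless) refinement is that you deduce the linear part is the identity outright in the $\C^{\times}$ and elliptic-curve cases, where the paper merely observes the automorphism group is finite and passes to an iterate.
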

\begin{proof}
We start with the case $D=0$ so that $\mathfrak{X}=\mathrm{Pic}^0(C)$. Then three situations can happen:
\par \smallskip
-- If $\mathrm{Pic}^0 (C)$ is an elliptic curve, then its automorphism group is finite (by automorphisms, we mean group automorphisms).
\par \smallskip
-- If $\mathrm{Pic}^0 (C)$ is isomorphic to $\C^{\times}$, its automorphism group is $\{\mathrm{id}, z \rightarrow z^{-1}\}$, hence of order two, so that we can also rule out this case.
\par \smallskip
-- Lastly, if $\mathrm{Pic}^0 (C)$ is isomorphic to $\C$, its automorphism group is $\C^{\times}$. We know that $C$ is a non-zero element of $\mathrm{Pic}^0(C)$ preserved by the action of $f^*$. This forces $f^*$ to act trivially on $\mathrm{Pic}^0 (C)$.
\par \medskip
Let $D$ be a divisor on $X$ such that $D.C=0$. By Proposition \ref{crucial}, there exists a positive integer $N$ and a divisor $S$ supported in $C$ such that $N \mathfrak{tr}\,(D)- \mathfrak{tr}\,(S) \in \mathrm{Pic}^0(C)$. Let $m$ be an integer such that $f^m$ fixes the components of $C$ and acts trivially on $\mathrm{Pic}\,(C)$. We define a map $\lambda \colon \Z \rightarrow \mathrm{Pic}^0(C)$ by the formula
\[
\lambda(k)=(f^{km})^* \{\mathfrak{tr} (D)\}-\mathfrak{tr} (D)
\]
\par \smallskip

\begin{enumerate}
\item[] \textbf{Claim 1}: $\lambda$ does not depend on $D$.
\par
Indeed, if $D'$ is in $\mathfrak{X}_D$, then $\mathfrak{tr}\,(D'-D) \in \mathrm{Pic}^0(C)$ so that
\[
(f^{km})^*(D'-D)=D'-D.
\]
This gives $(f^{km})^* \{\mathfrak{tr} (D')\}-\mathfrak{tr} (D')=(f^{km})^* \{\mathfrak{tr} (D)\}-\mathfrak{tr} (D)$
\par \medskip
\item[] \textbf{Claim 2}: $\lambda$ is a group morphism.
\begin{alignat*}{3}
\lambda(k+l)
& = (f^{km})^*(f^{lm})^* \{\mathfrak{tr} (D)\}-\mathfrak{tr} (D) \\
& = \begin{aligned}[t]
      (f^{km})^*\left\{(f^{lm})^* \{\mathfrak{tr} (D)\} \right\}
      & -\left\{(f^{lm})^* \{\mathfrak{tr} (D)\} \right\} \\
      & + (f^{lm})^* \{\mathfrak{tr} (D)\}-\mathfrak{tr} (D)
   \end{aligned} \\
&= \lambda(k) + \lambda(l) \quad \textrm{by Claim 1}.
\end{alignat*}
\par \medskip
\item[] \textbf{Claim 3}: $\lambda$ has finite image.
\par
For any integer $k$, since $N \,\mathfrak{tr}\,(D)- \mathfrak{tr}\,(S) \in \mathrm{Pic}^0(C)$, $(f^{km})^* \{N\, \mathfrak{tr} (D) \}=  N \,\mathfrak{tr} (D)$. Therefore, we see that $(f^{km})^* \{ \mathfrak{tr} (D) \} - \mathfrak{tr} (D)=\lambda(k)$ is a $N$-torsion point in $\mathrm{Pic}^0(C)$. Since there are finitely many $N$-torsion points, we get the claim.
\end{enumerate}
\par \medskip
We can now conclude. By claims $2$ and $3$, there exists an integer $s$ such that the restriction of $\lambda$ to $s \Z$ is trivial. This implies that $D$ is fixed by $f^{ms}$. By claim 1, all elements in $\mathfrak{X}_D$ are also fixed by $f^{ms}$.
\end{proof}
\subsection{Lift of the action from $\mathfrak{P}$ to the Picard group of $X$} \label{6.3}
By Proposition \ref{torsion} (2) and Proposition \ref{elliptic}, up to replacing $f$ with an iterate, we can assume that $f$ acts trivially on all components $\mathfrak{X}_D$, on $\mathrm{ker}\, (\mathfrak{tr})$, and fixes the components of $C$.
\par \medskip
Let $r$ be the rank of  $\mathrm{Pic}\, (X)$, and fix a basis $E_1, \ldots, E_r$ of $\mathrm{Pic}\, (X)$ composed of irreducible reduced curves. Let $n_i=E_i.C$. If $n_i=0$, then either $E_i$ is a component of $C$, or $E_i$ is disjoint from $C$. In the first case $E_i$ is fixed by $f$. In the second case, $E_i$ lies in the kernel of $\mathfrak{tr}$, so that it is also fixed by $f$. 
\par \medskip
Up to re-ordering the $E_i$'s, we can assume that $n_i>0$ for $1 \leq i \leq s$ and $n_i=0$ for $i>s$. We put $m=n_1 \ldots n_s$, $m_i=\frac{m}{n_i}$ and $L_i=m_iE_i$. 

\begin{proposition} \label{chic}
For $1 \leq i \leq s$, $L_i$ is fixed by an iterate of $f$.
\end{proposition}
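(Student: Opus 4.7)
For each $1 \le i \le s$, set $\phi_i := f^*L_i - L_i \in \mathrm{Pic}(X)$; the proposition is equivalent to saying that some common iterate of $f$ kills every $\phi_i$. Throughout the argument I exploit the blanket reduction of \S\ref{6.3}: $f^*$ acts trivially on $\ker(\mathfrak{tr})$, pointwise fixes each component $\mathfrak{X}_D$ of $\mathrm{Pic}(C)$ with $D \cdot C = 0$, and fixes each component $C_k$ of $C$. Since $f^*C_k = C_k$ and $f^*$ is an isometry, we have $\phi_i\cdot C_k = f^*L_i\cdot f^*C_k - L_i\cdot C_k = 0$ for all $i$ and $k$, so $\mathfrak{tr}(\phi_i)$ lies in the identity component $\mathrm{Pic}^0(C)$. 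Moreover $(L_i - L_j)\cdot C = m - m = 0$ for $1 \le i,j \le s$, so Proposition~\ref{elliptic} gives $\phi_i - \phi_j \in \ker(\mathfrak{tr})$; hence $\alpha := \mathfrak{tr}(\phi_i) \in \mathrm{Pic}^0(C)$ is independent of $i$.

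The step I expect to be the main obstacle is showing that $\alpha$ is torsion in $\mathrm{Pic}^0(C)$. Once this is established, a further iterate of $f$ forces $\alpha = 0$, and then every $\phi_i$ lies in $\ker(\mathfrak{tr})$. My plan for the torsion statement is to adapt the cocycle construction in the proof of Proposition~\ref{elliptic}: pick a divisor $F \in \mathrm{Pic}(X)$ with $F \cdot C$ equal to $d_0 := \gcd\{D \cdot C : D \in \mathrm{Pic}(X)\}$ (a divisor of $m$), express $L_i$ as $(m/d_0)F$ plus a class orthogonal to $C$, use Proposition~\ref{crucial} to absorb an auxiliary divisor supported on $C$, and then transport the torsion output of Proposition~\ref{elliptic} applied to $F$ to an analogous statement for $\alpha$.

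Once $\phi_i \in \ker(\mathfrak{tr})$ for every $i$, triviality of $f^*$ on $\ker(\mathfrak{tr})$ gives $f^*\phi_i = \phi_i$, so the iteration collapses to
\[
(f^k)^* L_i \;=\; L_i + k\,\phi_i \qquad (k \ge 0).
\]
Because $f^*$ is an isometry, $((f^k)^*L_i)^2 = L_i^2$ for all $k$, which forces $L_i \cdot \phi_i = 0$ and $\phi_i^2 = 0$. The intersection form is negative definite on $\ker(\mathfrak{tr})$ by Proposition~\ref{torsion}(2), so $\phi_i^2 = 0$ implies $\phi_i = 0$, as required. Hence a common iterate of $f$ fixes every $L_i$ with $1 \le i \le s$.
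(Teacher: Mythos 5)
Your opening moves match the paper exactly: using Proposition~\ref{elliptic} on the components $\mathfrak{X}_{L_i-L_j}$ to get that $\alpha:=\mathfrak{tr}(f^*L_i-L_i)$ lies in $\mathrm{Pic}^0(C)$ and is independent of $i$, and your closing move (once $\phi_i\in\ker(\mathfrak{tr})$, the identity $(f^k)^*L_i=L_i+k\phi_i$ forces $\phi_i^2=0$, hence $\phi_i=0$ by negative definiteness) is sound and is essentially the same tool the paper uses. The problem is the middle: the assertion that $\alpha$ is torsion is precisely the hard point, and you only offer a plan for it. That plan cannot work as described. Proposition~\ref{elliptic} applies only to components $\mathfrak{X}_D$ with $D\cdot C=0$, whereas your auxiliary divisor $F$ has $F\cdot C=d_0>0$, so there is no ``torsion output of Proposition~\ref{elliptic} applied to $F$.'' Writing $L_i=(m/d_0)F+B$ with $B\cdot C=0$ only gives $\alpha=(m/d_0)\,\mathfrak{tr}(f^*F-F)$, i.e.\ it replaces the unknown translation vector by a rational multiple of itself; the reduction is circular. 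Nor can any clever choice of $F$ save this: by Lemma~\ref{lemmenef}~(2) every class fixed by the parabolic isometry $f^*$ is orthogonal to $\theta$, so no $f^*$-fixed divisor class meets $C$ nontrivially, and the existing machinery gives no control whatsoever on the components of $\mathrm{Pic}(C)$ of nonzero total degree. Indeed, if one expands any $B$ with $B\cdot C=0$ in terms of the $L_i$ and classes fixed by $f^*$, the coefficients of the $L_i$ sum to zero, so Proposition~\ref{elliptic} imposes no constraint at all on $\alpha$.

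The paper avoids the torsion question entirely. After the common first step it sets $D=f^*L_1-L_1$ and writes $f^*L_i-L_i=D+D_i$ with $D_i\in\ker(\mathfrak{tr})$; pairing with any $A\in\ker(\mathfrak{tr})$ (on which $f^*$ is trivial) gives $D_i\cdot A=-D\cdot A$, so that in an orthogonal basis of the negative definite lattice $\ker(\mathfrak{tr})$ the class $D_i$ is determined independently of $i$, and $D_1=0$ forces every $D_i=0$. The upshot is the \emph{equality} $f^*L_i-L_i=D$ for all $i$, not merely a congruence modulo $\ker(\mathfrak{tr})$. Combined with $f^*E_j=E_j$ for $j>s$, this makes $f^*-\mathrm{id}$ of rank at most one on $\mathrm{Pic}(X)_{\Q}$, which is incompatible with $f^*$ being parabolic (a parabolic isometry has a $3\times 3$ Jordan block by Proposition~\ref{classification}, so its nilpotent part has rank two); this is how the standing assumptions of \S\ref{5.2} are contradicted without ever deciding whether $\alpha$ is torsion. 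So the gap in your write-up is not a routine verification you postponed: it is the step where your route genuinely diverges from the paper's, and the paper's way out is to prove the weaker statement that all the $\phi_i$ coincide and then let parabolicity finish the job.
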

\begin{proof}
For $1 \leq i \leq s$, we have $L_i.C=m$, so that for $1\leq i, j \leq s $, $(L_i-L_j).C=0$. Therefore, by Proposition \ref{elliptic}, an iterate of $f$ acts trivially on $\mathfrak{X}_{L_i-L_j}$. Since there are finitely many couples $(i,j)$, we can assume (after replacing $f$ by an iterate) that $f$ acts trivially on all $\mathfrak{X}_{L_i-L_j}$. 
\par \medskip
Let us now prove that $f^* L_i$ and $L_i$ are equal in $\mathrm{Pic}\,(X)$. Since $f^*$ acts trivially on the component $\mathfrak{X}_{L_i-L_j}$, we have $\mathfrak{tr}\,(f^*L_i-L_i)=\mathfrak{tr}\,(f^*L_j-L_j)$. Let $D=f^*L_1-L_1$. Then for any $i$, we can write $f^*L_i-L_i=D+D_i$ where $\mathfrak{tr}\, (D_i)$=0.
\par \medskip
Let us prove that the class $D_i$ in $\mathrm{Pic}\, (X)$ is independent of $i$. For any element $A$ in $\mathrm{ker}\, (\mathfrak{tr})$, we have
\[
D_i. \,A=(f^*L_i-L_i-D). \,A=f^*L_i . f^*A-L_i . \,A-D. \,A=-D. \,A
\]
since $f^*A=A$. Now since the intersection form in non-degenerate on $\mathrm{ker}\,(\mathfrak{tr})$, if $(A_k)_k$ is an orthonormal basis of $\mathrm{ker}\,(\mathfrak{tr})$, 
\[
D_i=-\sum_k (D_i . \, A_k) \,A_k=\sum_k (D . \, A_k) \, A_k.
\] 
Therefore, all divisors $D_i$ are linearly equivalent. Since $D_1=0$, we are done.
\end{proof}
We can end the proof of Gizatullin's theorem. Since $L_1, \ldots, L_s, E_{s+1}, \ldots, E_{r}$ span $\mathrm{Pic}\, (X)$ over $\Q$, we see that the action of $f$ on $\mathrm{Pic}\,(X)$ is finite. This gives the required contradiction.

\section{Minimal rational elliptic surfaces} \label{7}
Throughout this section, we will assume that $X$ is a rational elliptic surface whose fibers contain no exceptional curves; such a surface will be called by a slight abuse of terminology a minimal elliptic rational surface.
\subsection{Classification theory} \label{7.1}
The material recalled in this section is more or less standard, we refer to \cite[Chap. II \S 10.4]{DS} for more details.
\begin{lemma} \label{hehehe}
Let $X$ be a rational surface with $K_X^2=0$. Then $|-K_X| \neq \varnothing$. Besides, for any divisor $\mathfrak{D}$ in $|-K_X|$ \emph{:}
\begin{enumerate}
\item $\mathrm{h}^1(\mathfrak{D}, \oo_{\mathfrak{D}})=1$.
\item For any divisor $D$  such that $0< D < \mathfrak{D}$, $\mathrm{h}^1({D}, \oo_{D})=0$.
\item $\mathfrak{D}$ is connected and its class is non-divisible in $\mathrm{NS}\,(X)$.
\end{enumerate}
\end{lemma}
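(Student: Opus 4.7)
The plan is to combine Riemann--Roch with the short exact sequence $0 \to \oo_X(-D) \to \oo_X \to \oo_D \to 0$ throughout, and to exploit rationality in the form $\mathrm{h}^i(\oo_X) = (1,0,0)$ together with $\kappa(X) = -\infty$. The existence of $|-K_X|$ is almost immediate: Riemann--Roch gives $\chi(-K_X) = 1 + K_X^2 = 1$, and Serre duality together with $\mathrm{h}^0(2K_X) = 0$ (which holds because $\kappa(X) = -\infty$) yields $\mathrm{h}^2(-K_X) = 0$, so $\mathrm{h}^0(-K_X) \geq 1$.

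For (1), fix $\mathfrak{D} \in |-K_X|$ and feed the sequence $0 \to \oo_X(K_X) \to \oo_X \to \oo_\mathfrak{D} \to 0$ into cohomology. Serre duality and rationality give the dimensions $(\mathrm{h}^0, \mathrm{h}^1, \mathrm{h}^2)(\oo_X(K_X)) = (0, 0, 1)$, so the long exact sequence immediately reads off $\mathrm{h}^0(\oo_\mathfrak{D}) = \mathrm{h}^1(\oo_\mathfrak{D}) = 1$. As a free bonus, $\mathrm{h}^0(\oo_\mathfrak{D}) = 1$ settles the connectedness half of (3). For (2), the same trick with $\oo_X(-D)$ in place of $\oo_X(K_X)$ works, provided we kill $\mathrm{h}^0$ and $\mathrm{h}^2$ of $\oo_X(-D)$: the first vanishes because $D$ is a nonzero effective divisor, and the second equals $\mathrm{h}^0(K_X + D) = \mathrm{h}^0(-(\mathfrak{D} - D)) = 0$ since $\mathfrak{D} - D$ is effective and nonzero. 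Then $\mathrm{H}^1(\oo_D)$ is sandwiched between two zeros in the long exact sequence, so $\mathrm{h}^1(\oo_D) = 0$.

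The main obstacle is the non-divisibility part of (3), which I plan to settle by playing (2) against the genus formula. Suppose for contradiction that $[\mathfrak{D}] = n[F]$ in $\mathrm{NS}(X) = \mathrm{Pic}(X)$ with $n \geq 2$; then $F^2 = F \cdot K_X = 0$ and Riemann--Roch gives $\chi(F) = 1$. The first move is to show that $F$ admits an effective representative: otherwise $\mathrm{h}^2(F) \geq 1$, so Serre duality produces an effective $E \equiv K_X - F = -(n+1)F$; then $E + \mathfrak{D}$ is effective of class $[-F]$, and multiplying by $n$ shows that $-nF \equiv K_X$ would have an effective representative, contradicting $\mathrm{h}^0(K_X) = 0$ for a rational surface. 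Once $F$ is represented by an effective $F'$, set $\mathfrak{D}' := nF' \in |-K_X|$ and apply (2) with $D = F'$, which is legitimate since $\mathfrak{D}' - F' = (n-1)F'$ is effective and nonzero; this gives $\mathrm{h}^1(\oo_{F'}) = 0$. On the other hand, the genus formula yields $p_a(F') = 1 + \tfrac{1}{2}(F'^2 + F' \cdot K_X) = 1$, i.e.\ $\mathrm{h}^1(\oo_{F'}) - \mathrm{h}^0(\oo_{F'}) = 0$, forcing $\mathrm{h}^1(\oo_{F'}) \geq 1$, the desired contradiction.
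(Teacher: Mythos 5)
Your proof is correct and follows essentially the same route as the paper: Riemann--Roch plus $\mathrm{h}^0(2K_X)=0$ for existence, the structure sequence $0 \to \oo_X(-D) \to \oo_X \to \oo_D \to 0$ with the vanishing of $\mathrm{h}^1(\oo_X)$ and $\mathrm{h}^2(\oo_X)$ for (1) and (2), and for non-divisibility the same two-step argument (first force an effective representative of the fractional class by ruling out effectivity of its Serre-dual partner, then contradict (2) via $\chi(\oo_{F'})=0$). The only cosmetic differences are that you read connectedness off $\mathrm{h}^0(\oo_{\mathfrak{D}})=1$ rather than from additivity of cohomology over components, and you phrase the final contradiction via the genus formula rather than $\chi(\oo_{\mathfrak{D}'})=\chi(\oo_X)-\chi(\oo_X(-\mathfrak{D}'))$; these are equivalent.
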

\begin{proof} $ $
The fact that $|-K_X| \neq \varnothing$ follows directly from the Riemann-Roch theorem.
\begin{enumerate}
\item We write the exact sequence of sheaves
\[
0 \rightarrow \oo_X(-\mathfrak{D}) \rightarrow \oo_X \rightarrow \oo_{\mathfrak{D}} \rightarrow 0.
\]
Since $X$ is rational,  $\mathrm{h}^1(X, \oo_X)=\mathrm{h}^2(X, \oo_X)=0$; and since $\mathfrak{D}$ is an anticanonical divisor, we have by Serre duality
\[
\mathrm{h^2(X, -\mathfrak{D})}=\mathrm{h^0(X, K_X)}=1.
\]
\item We use the same proof as in (1) with $D$ instead of $\mathfrak{D}$. We have 
\[
\mathrm{h^2(X, -{D})}=\mathrm{h^0(X, K_X+D)}=\mathrm{h^0(X, D-\mathfrak{D})}=0.
\]
\item The connectedness follows directly from $(1)$ and $(2)$: if $\mathfrak{D}$ is the disjoint reunion of two divisors $\mathfrak{D}_1$ and $\mathfrak{D}_2$, then $\mathrm{h}^0(\mathfrak{D}, \oo_\mathfrak{D})=\mathrm{h}^0(\mathfrak{D}_1, \oo_{\mathfrak{D}_1})+\mathrm{h}^0(\mathfrak{D}_2, \oo_{\mathfrak{D}_2})=0$, a contradiction. 
\par \medskip
\noindent Assume now that $\mathfrak{D}=m \mathfrak{D}'$ in $\mathrm{NS}\,(X)$, where $\mathfrak{D}'$ is not necessarily effective and $m \geq 2$. Then, using Riemann-Roch, 
\[
\qquad\mathrm{h}^0(X, {\mathfrak{D}'})+\mathrm{h}^0(X, -(m+1) \mathfrak{D}')\geq 1.
\]
If $-(m+1) \mathfrak{D}'$ is effective, then $|NK_X|\neq \varnothing$ for some positive integer $N$, which is impossible. Therefore the divisor $\mathfrak{D}'$ is effective; and $\mathfrak{D}-\mathfrak{D'}=(m-1) \mathfrak{D'}$ is also effective.
Using Riemann-Roch one more time,
\[
\begin{aligned}
\qquad\mathrm{h}^0(\mathfrak{D}', \oo_{\mathfrak{D}'})-\mathrm{h}^1(\mathfrak{D}', \oo_{\mathfrak{D}'})=\chi(\oo_{\mathfrak{D}'})&=\chi(\oo_X)-\chi(\mathcal{O}_X(-\mathfrak{D}'))\\
&=-\frac{1}{2} \mathfrak{D}' .(\mathfrak{D}'+K_X)=0.
\end{aligned}
\]
Thanks to $(2)$, since $0 < \mathfrak{D'} < \mathfrak{D}$, 
$\mathrm{h}^1(\mathfrak{D}', \oo_{\mathfrak{D}'})=0$, so that $\mathrm{h}^0(\mathfrak{D}', \oo_{\mathfrak{D}'})=0$. This gives again a contradiction.

\end{enumerate}

\end{proof}

\begin{proposition} \label{dix} Let $X$ be a rational minimal elliptic surface and $C$ be a smooth fiber.
\begin{enumerate}
\item $K_X^2=0$ and $\mathrm{rk} \, \{\mathrm{Pic}\,(X)\}=10.$
\item For any irreducible component $E$ of a reducible fiber , $E^2 <0$ and $E.K_X=0.$
\item There exists a positive integer $m$ such that $-mK_X=C$ in $\mathrm{Pic}\,(X)$.
\end{enumerate}
\end{proposition}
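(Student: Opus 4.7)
The base of the elliptic fibration $\pi \colon X \to B$ must be $\mathbb{P}^1$: the function field $\mathbb{C}(B)$ embeds into $\mathbb{C}(X)$, which is purely transcendental since $X$ is rational, so by L\"uroth's theorem $\mathbb{C}(B) \simeq \mathbb{C}(t)$. A smooth fiber $C$ is an elliptic curve with $C^2=0$, and adjunction gives $K_X \cdot C = 2g(C)-2-C^2 = 0$. My plan is to combine these two identities with Kodaira's canonical bundle formula to pin down $K_X$ numerically, then read off each assertion of the proposition.

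For part $(1)$, the canonical bundle formula for a relatively minimal elliptic surface reads
\[
K_X \equiv \pi^*(K_B + \mathcal{L}) + \sum_j (m_j - 1)\, F_j,
\]
where $\mathcal{L}$ is a line bundle on $B = \mathbb{P}^1$ of degree $\chi(\mathcal{O}_X) = 1$ and the $F_j$ are the reduced supports of the multiple fibers, of multiplicity $m_j$. Every term on the right is numerically a rational multiple of the fiber class $F$ (using $m_j F_j \equiv F$), hence $K_X \equiv \lambda F$ for some $\lambda \in \mathbb{Q}$, and therefore $K_X^2 = \lambda^2 F^2 = 0$. Noether's formula $12\, \chi(\mathcal{O}_X) = K_X^2 + e(X)$ with $\chi(\mathcal{O}_X) = 1$ then yields $e(X) = 12$; since $b_1(X) = b_3(X) = 0$ for the rational surface $X$, we obtain $b_2(X) = 10$, and hence $\mathrm{rk}\, \mathrm{Pic}\,(X) = 10$.

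For part $(2)$, let $E$ be an irreducible component of a reducible fiber $F_0$. Then $E \cdot F = E \cdot F_0 = 0$, and combined with part $(1)$ this gives $K_X \cdot E = \lambda\, F \cdot E = 0$. For the strict inequality $E^2 < 0$, I would invoke Zariski's lemma: the intersection form restricted to the components of $F_0$ is negative semi-definite with radical spanned by the fiber class itself; since $F_0$ is reducible, no single component is proportional to $F_0$, forcing $E^2 < 0$.

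For part $(3)$, I would invoke Lemma \ref{hehehe}, which now applies since $K_X^2 = 0$: it provides a divisor $\mathfrak{D} \in |-K_X|$ whose class is non-divisible in $\mathrm{NS}\,(X)$. From part $(1)$, $[C]$ and $[-K_X] = [\mathfrak{D}]$ are proportional in $\mathrm{NS}\,(X)_{\mathbb{Q}}$; primitivity of $[-K_X]$ means that the cyclic subgroup $\mathbb{Z}\,[-K_X]$ is saturated in $\mathrm{NS}\,(X)$, so $[C] = m\,[-K_X]$ for some integer $m$, and testing against an ample class yields $m > 0$. Since $\mathrm{Pic}\,(X) = \mathrm{NS}\,(X)$ is torsion-free for a rational surface, this lifts to the linear equivalence $-mK_X = C$ in $\mathrm{Pic}\,(X)$. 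The main obstacle is the invocation of Kodaira's canonical bundle formula, which rests on the classification of singular fibers of relatively minimal elliptic surfaces; once this input is in hand, the remainder assembles straightforwardly from Hodge-index reasoning, Noether's formula, and the primitivity conclusion of Lemma \ref{hehehe}.
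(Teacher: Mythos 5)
Your proof is correct, but for part (1) it takes a genuinely different route from the paper. You derive $K_X^2=0$ from Kodaira's canonical bundle formula, which exhibits $K_X$ as numerically a rational multiple of the fibre class, and then obtain the rank from Noether's formula. The paper instead stays more self-contained: it shows that $K_X+C$ is effective via the exact sequence $0\to K_X\to K_X+C\to\omega_C\to 0$ together with $\mathrm{h}^0(C,\omega_C)=1$ and the vanishing of $\mathrm{h}^0(X,K_X)$ and $\mathrm{h}^1(X,K_X)$; it then observes that $K_X+C$ is supported on fibre components of negative self-intersection, uses relative minimality to get $K_X\cdot(K_X+C)\geq 0$, and concludes $K_X^2=0$ from nefness of $C$ and the Hodge index theorem. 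Your appeal to the canonical bundle formula imports the classification theory of relatively minimal elliptic fibrations as a black box, which is a heavier input but makes the numerics immediate; the paper's argument is longer but uses only Riemann--Roch, Serre duality and the Hodge index theorem. For part (2), your invocation of Zariski's lemma and the paper's connectedness-plus-Hodge-index argument are essentially the same computation (the paper in effect reproves the relevant case of Zariski's lemma). For part (3), both proofs rest on the non-divisibility statement of Lemma \ref{hehehe}; you fix the sign of $m$ by pairing with an ample class, where the paper uses $|dK_X|=\varnothing$ for $d>0$ --- both are fine.
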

\begin{proof}
Let $C$ be any fiber of the elliptic fibration. Then for any reducible fiber $D=\sum_{i=1}^s a_i D_i$, $D_i . C=C^2=0$. By the Hodge index theorem, $D_i^2 \leq 0$. If $D_i^2=0$, then $D_i$ is proportional to $C$. Let us write $D=a_i D_i+(D-a_i D_i)$. On the one hand, $a_i D_i .(D-a_i D_i)=0$ since $D_i$ and $D-D_i$ are proportional to $C$. On the other hand, $a_i D_i .(D-a_i D_i)>0$ since $D$ is connected. This proves the first part of (2).
\par \medskip
We have $K_X.C=C.C=0$. By the Hodge index theorem, $K_X^2 \leq 0$. We have an exact sequence
\[
0 \rightarrow K_X \rightarrow K_X+C \rightarrow \omega_C \rightarrow 0.
\]
\par \medskip
Since $\mathrm{h}^0(C, \omega_C)=1$ and $\mathrm{h^0(X, K_X)}=\mathrm{h}^1(X, K_X)=\mathrm{h}^1(X, \oo_X)=0$, $\mathrm{h}^0(X, K_X+C)=1$. Thus, the divisor $D=K_X+C$ is effective. Since $D.C=0$, all components of $D$ are irreducible components of the fibers of the fibration. The smooth components cannot appear, otherwise $K_X$ would be effective. Therefore, if $D=\sum_{i=1}^{s} a_i D_i$, we have $D_i^2<0$. Since $X$ is minimal, $K_X.D_i \geq 0$ (otherwise $D_i$ would be exceptional). Thus, $K_X.D \geq 0$. 
\par \medskip
Since $C$ is nef, we have $D^2=(K_X+C).D \geq K_X.D\geq0$. On the other hand, $D.C=0$ so that $D^2=0$ by the Hodge index theorem. Thus $K_X^2=0$. Since $X$ is rational, it follows that $\mathrm{Pic}\, (X)$ has rank $10$. This gives (1).
\par \medskip
Now $K_X^2=C^2=C.K_X=0$ so that $C$ and $K_X$ are proportional. By Lemma \ref{hehehe}, $K_X$ is not divisible in $\mathrm{NS}\,(X)$, so that $C$ is a multiple of $K_X$. Since $|dK_X|=0$ for all positive $d$, $C$ is a negative multiple of $K_X$. This gives (3).
\par \medskip
The last point of (2) is now easy: $E.K_X=-\frac{1}{m} E.C=0$. 
\end{proof}
We can be more precise and describe explicitly the elliptic fibration in terms of the canonical bundle. 
\begin{proposition} \label{primitive}
Let $X$ be a minimal rational elliptic surface. Then for $m$ large enough, we have $\mathrm{dim}\, |-mK_X| \geq 1$. For $m$ minimal with this property, $|-mK_X|$ is a pencil without base point whose generic fiber is a smooth and reduced elliptic curve. 
\end{proposition}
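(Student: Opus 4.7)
My plan combines Proposition \ref{dix}(3) with a Hodge-index analysis of the fixed/moving decomposition of a general element, invoking the minimality of $m$ twice. Existence is immediate: by Proposition \ref{dix}(3) there is $m_0 \geq 1$ with $-m_0 K_X \sim C$ in $\mathrm{Pic}\,(X)$, where $C$ is a smooth fiber of the elliptic fibration, and the pencil of fibers of this fibration is contained in $|-m_0 K_X|$, so $\dim|-m_0 K_X| \geq 1$. Thus the set of $m \geq 1$ with $\dim|-mK_X| \geq 1$ is non-empty.

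Let $m$ now be the smallest such integer and decompose a general element as $D = F + M$, where $F$ is the fixed part and $|M|$ has no fixed component (so $\dim|M| \geq 1$ and $M^2 \geq 0$). The class $[-mK_X] = (m/m_0)[C]$ is a positive multiple of the nef isotropic class $[C]$, hence is nef, and $(F+M) \cdot C = -m K_X \cdot C = 0$ together with non-negativity of each summand forces $F \cdot C = M \cdot C = 0$. Because the intersection form has signature $(1,r-1)$ and $C$ is a non-zero isotropic class, any class orthogonal to $C$ has non-positive self-intersection with equality only along $\mathbb{R}[C]$; combined with $M^2 \geq 0$ this yields $M^2 = 0$ and $[M] \in \mathbb{R}[K_X]$. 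By Lemma \ref{hehehe}(3), $[K_X]$ is non-divisible in $\mathrm{NS}\,(X)$, so $[M] = -k[K_X]$ for a positive integer $k$, and rationality of $X$ (numerical equals linear equivalence) gives $M \sim -kK_X$; therefore $\dim|-kK_X| \geq \dim|M| \geq 1$ and minimality forces $k \geq m$. On the other hand $[F] = (k-m)[K_X]$ must represent an effective (possibly zero) divisor, and since every plurigenus of $X$ vanishes no positive multiple of $K_X$ is effective, so $k \leq m$. Hence $k=m$, $[F]=0$ and $F=0$.

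It follows that $|-mK_X|$ has no fixed component, a general member $D$ satisfies $D^2 = 0$, and two general members share no component but intersect in a $0$-cycle of degree $0$, so they are disjoint and $|-mK_X|$ is base-point free. The associated morphism $\varphi \colon X \to \mathbb{P}^n$ has image of dimension at most $1$ (else $D^2>0$), and its Stein factorization produces $f \colon X \to \mathbb{P}^1$ (the base being $\mathbb{P}^1$ by rationality of $X$) with connected fibers and $\varphi = g \circ f$. If $d := \deg g > 1$, a general fiber $F_0$ of $f$ has $[F_0] = -(m/d)[K_X]$; integrality of $[F_0]$ together with non-divisibility of $[K_X]$ forces $m/d \in \mathbb{Z}$, and then $F_0 \sim -(m/d) K_X$ moves in a pencil, contradicting minimality since $m/d < m$. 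Hence $d=1$, $\varphi = f$ has connected fibers, and $\dim|-mK_X| = 1$. Finally, a general member is smooth by Bertini, connected as a fiber of $f$, and satisfies $2p_a - 2 = D^2 + D \cdot K_X = 0$ by the genus formula, so it is a smooth reduced elliptic curve. The main technical hurdles are the two appeals to minimality---the extraction of $F=0$ and the ruling out of $d>1$---both resting on the non-divisibility of $[K_X]$ in $\mathrm{NS}\,(X)$ and on the nefness of $-mK_X$.
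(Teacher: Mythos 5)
Your proof is correct and follows essentially the same route as the paper: existence via Proposition \ref{dix}(3), elimination of the fixed part by the Hodge index theorem together with the non-divisibility of $K_X$ and the minimality of $m$, base-point freeness from $D^2=0$, connectedness via Stein factorization, and Bertini plus the genus formula. The only (harmless) variation is in the connectedness step, where you rule out a disconnected general member by invoking the minimality of $m$ a second time, whereas the paper derives a contradiction from $\dim|-mK_X|\geq 2$; both arguments rest on the same Stein factorization.
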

\begin{proof}
The first point follows from Proposition \ref{dix}.
Let us prove that $|-mK_X|$ has no fixed part. As usual we write $-mK_X=F+D$ where $F$ is the fixed part. Then since $C$ is nef and proportional to $K_X$, $C.F=C.D=0$. Since $D^2\geq 0$, by the Hodge index theorem $D^2=0$ and $D$ is proportional to $C$. Thus $D$ and $F$ are proportional to $K_X$. 
\par \medskip
By Lemma \ref{hehehe}, the class of $K_X$ is non-divisible in $\mathrm{NS}\,(X)$. Thus $F=m' \mathfrak{D}$ for some integer $m'$ with $0 \leq m' <m$. Hence $D=(m-m') \,\mathfrak{D}=-(m-m') \,K_X$ and $\mathrm{dim}\, |D| \geq 1$. By the minimality of $m$, we get $m'=0$.
\par \medskip
Since $K_X^2=0$, $-mK_X$ is basepoint free and $|-mK_X|=1$. Let us now prove that the divisors in $|-mK_X|$ are connected. If this is not the case, we use the Stein decomposition and write the Kodaira map of $-mK_X$ as
\[
X \rightarrow S \xrightarrow{\psi} |-mK_X|^*
\]
where $S$ is a smooth compact curve, and $\psi$ is finite. Since $X$ is rational, $S=\mathbb{P}^1$ and therefore we see that each connected component $D$ of a divisor in $|-mK_X|$ satisfies $\mathrm{dim}\, |D| \geq 1$. Thus $\mathrm{dim}\, |D| \geq 2$ and we get a contradiction.
\par \medskip
We can now conclude: a generic divisor in $|-mK_X|$ is smooth and reduced by Bertini's theorem. The genus formula shows that it is an elliptic curve.
\end{proof}
\begin{remark}$ $ \par
\begin{enumerate}
\item Proposition \ref{primitive} means that the relative minimal model of $X$ is a \textit{Halphen surface} of index $m$, that is a rational surface such that $|-mK_X|$ is a pencil without fixed part and base locus. Such a surface is automatically minimal.
\item The elliptic fibration $X \rightarrow |-mK_X|^*$ doesn't have a rational section if $m \geq 2$. Indeed, the existence of multiple fibers (in our situation, the fiber $m \mathfrak{D}$) is an obstruction for the existence of such a section.
\end{enumerate}
\end{remark}
\subsection{Reducible fibers of the elliptic fibration} \label{6.2}
We keep the notation of the preceding section: $X$ is a Halphen surface of index $m$ and $\mathfrak{D}$ is an anticanonical divisor. 
\begin{lemma} \label{woodloot}
All the elements of the system $|-mK_X|$ are primitive, except the element $m \mathfrak{D}$.
\end{lemma}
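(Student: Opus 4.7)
The plan is to argue by contradiction. Suppose $D \in |-mK_X|$ is not primitive, so that we may write $D = k D'$ as divisors for some integer $k \geq 2$ and some (nonzero) effective divisor $D'$. I want to deduce that $D = m \mathfrak{D}$, using only two ingredients: the non-divisibility of $K_X$ in $\mathrm{NS}\,(X)$ (Lemma \ref{hehehe}(3)) and the minimality of $m$ (Proposition \ref{primitive}).

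First I would show that $k$ divides $m$. In $\mathrm{Pic}\,(X) = \mathrm{NS}\,(X)$ we have $k [D'] = [D] = -m K_X$. Thus the class $[D'] \in \mathrm{Pic}\,(X)$ satisfies $k[D'] + mK_X = 0$, and since $K_X$ is non-divisible in $\mathrm{NS}\,(X)$ by Lemma \ref{hehehe}(3), this forces $k \mid m$. Writing $m = k n$ with $1 \leq n < m$, we conclude that $D' \sim -n K_X$, i.e.\ $D' \in |-nK_X|$.

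Next I would exploit the minimality of $m$. By Proposition \ref{primitive}, $m$ is the smallest positive integer such that $\dim |-mK_X| \geq 1$, so $\dim |-nK_X| \leq 0$. On the other hand, $n \mathfrak{D}$ is an effective divisor with class $-nK_X$, so $|-nK_X|$ is non-empty and therefore consists of a single element: $|-nK_X| = \{ n\mathfrak{D}\}$. Hence $D' = n \mathfrak{D}$ and $D = k n \mathfrak{D} = m \mathfrak{D}$, which contradicts the assumption that $D$ is a non-trivial multiple distinct from $m \mathfrak{D}$.

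The only delicate point is the divisibility step $k \mid m$, which is where Lemma \ref{hehehe}(3) is essential; once that is in hand, the minimality of $m$ from Proposition \ref{primitive} does the rest, and the conclusion is immediate since $n\mathfrak{D}$ visibly exhibits $|-nK_X|$ as non-empty.
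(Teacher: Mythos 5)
Your proof is correct and follows essentially the same route as the paper: use the non-divisibility of $K_X$ in $\mathrm{NS}\,(X)$ to force $k \mid m$ and identify $D'$ as an element of $|{-nK_X}|=|n\mathfrak{D}|$, then use the minimality of $m$ to see that this linear system is the singleton $\{n\mathfrak{D}\}$. You simply spell out the divisibility step that the paper leaves implicit.
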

\begin{proof}
Since $K_X$ is non-divisible in $\mathrm{NS}\,(X)$, a non-primitive element in $|-mK_X|$ is an element of the form $k D$ where $D \in |m' \mathfrak{D}|$ and $m=k m'$. But $\mathrm{dim} \,|m' \mathfrak{D}|=0$ so that $|D|=|m' \mathfrak{D}|=\{m' \mathfrak{D}\}$. 
\end{proof}
In the sequel, we denote by $S_1, \ldots, S_\lambda$ the reducible fibers of $|-mK_X|$.
We prove an analog of Lemma \ref{base}, but the proofs will be slightly different.
\begin{lemma} \label{libre} $ $
\begin{enumerate}
\item Let $S=\alpha_1 E_1 + \ldots + \alpha_{\nu} E_{\nu}$ be a reducible fiber of $|-mK_X|$. Then the classes of the components $E_i$ in $\mathrm{NS}\,(X)$ are linearly independent.
\item If $D$ is a divisor supported in $S_1 \cup \ldots \cup S_{\lambda}$ such that $D^2=0$, then there exists integers $n_i$ such that $D=n_1S_1 + \ldots + n_{\lambda} S_{\lambda}$.
\end{enumerate}
\end{lemma}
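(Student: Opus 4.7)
The argument rests on Zariski's lemma applied to the elliptic fibration $\pi \colon X \to |{-mK_X}|^*$ produced by Proposition \ref{primitive}. For each reducible fiber $S_i = \sum_{j} \alpha_{ij} E_{ij}$, the intersection matrix of the components $(E_{ij})_j$ on the abstract lattice $\mathbb{Z}^{\nu_i}$ is negative semi-definite, and its $\mathbb{Q}$-kernel is the one-dimensional subspace spanned by the multiplicity vector $(\alpha_{ij})_j$. This is the standard consequence of the connectedness of the fibers of $\pi$ (part of Proposition \ref{primitive}) and the Hodge index theorem, and it is what I will use throughout.

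For (1), I would fix a reducible fiber $S = \sum_{j} \alpha_j E_j$ and suppose for contradiction that $\sum_j \beta_j [E_j] = 0$ in $\mathrm{NS}(X)$ with some $\beta_j \neq 0$. Then the self-intersection $(\sum_j \beta_j E_j)^2$ vanishes in $\mathrm{NS}(X)$, which is precisely the value of the abstract quadratic form on the vector $(\beta_j)$. Zariski's lemma forces $(\beta_j) = c(\alpha_j)$ for some $c \in \mathbb{Q}$, and re-injecting into $\mathrm{NS}(X)$ collapses the supposed relation to $c\,[S] = 0$. But $[S] = -m[K_X]$ is non-zero in $\mathrm{NS}(X)$ since $[K_X]$ is non-divisible by Lemma \ref{hehehe} (3), so $c = 0$ and then $\beta_j = 0$ for every $j$, yielding the contradiction.

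For (2), I would decompose $D = \sum_{i=1}^{\lambda} D^{(i)}$ with $D^{(i)}$ supported in $S_i$. Since distinct fibers of $\pi$ are disjoint, the cross terms $D^{(i)} \cdot D^{(j)}$ vanish for $i \neq j$ and thus $D^2 = \sum_i (D^{(i)})^2$. Each summand is non-positive by the semi-definiteness in Zariski's lemma, so the assumption $D^2 = 0$ forces $(D^{(i)})^2 = 0$ for every $i$. The kernel description then gives $D^{(i)} = c_i S_i$ as rational combinations of the components of $S_i$, for some $c_i \in \mathbb{Q}$. Part (1) upgrades this identity from $\mathrm{NS}(X)$ to an actual equality of divisors, and the integrality of $D^{(i)}$ combined with the primitivity of $S_i$ supplied by Lemma \ref{woodloot} compels $c_i$ to be an integer, giving the desired expression $D = \sum_i n_i S_i$.

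The main technical input is Zariski's lemma; everything else is bookkeeping. The only genuine subtlety concerns the case in which some $S_i$ coincides with the multiple anticanonical fiber $m\mathfrak{D}$, which is the exception in Lemma \ref{woodloot}: in that situation the integrality step of (2) must absorb the factor $m$, reading $c_i S_i = n_i \mathfrak{D}$ rather than $n_i S_i$, and one must either treat this fiber separately or reformulate $n_i$ as a rational number with controlled denominator.
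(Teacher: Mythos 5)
Your proof is correct and runs on the same engine as the paper's: negative semi-definiteness of the intersection form on the components of a connected fiber, identification of its radical with the rational multiples of the fiber, and the primitivity supplied by Lemma \ref{woodloot}. The differences are in packaging. You import Zariski's lemma as a black box, whereas the paper rederives the two facts it needs ad hoc; be aware that the kernel half of Zariski's lemma is \emph{not} a direct consequence of the Hodge index theorem (which only yields proportionality of \emph{classes} in $\mathrm{NS}(X)$) --- the passage to proportionality of coefficient vectors requires the connectedness of the dual graph, and that is precisely the content of the paper's proof of part (1), which splits a putative relation into two effective divisors with disjoint supports and derives a contradiction from a component $E$ of $S$ meeting one of them. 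Since Zariski's lemma is standard and its usual proof is independent of part (1), there is no circularity in your argument, and your endgame for (1) (the relation vector is $c(\alpha_j)$, hence $c[S]=-cm[K_X]=0$, hence $c=0$) is arguably cleaner than the paper's. For (2) your argument is essentially identical to the paper's. Finally, your caveat about the multiple fiber is a genuine catch: the paper's own proof invokes Lemma \ref{woodloot} at exactly the integrality step without excluding the fiber $m\mathfrak{D}$, and when $\mathfrak{D}$ is reducible and $m\geq 2$ the divisor $D=\mathfrak{D}$ satisfies the hypotheses of (2) but is only a rational multiple of $S_i=m\mathfrak{D}$; so that fiber must indeed be treated separately (note that Proposition \ref{sept} effectively does so by taking $\mathfrak{D}$ itself, rather than $m\mathfrak{D}$, among the generators of $\mathcal{N}$).
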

\begin{proof} If there is a linear relation among the curves $E_i$, we can write it as $D_1 \equiv D_2$, where $D_1$ and $D_2$ are linear combinations of the $E_i$ with positive coefficients (hence effective divisors) having no component in common. We have $D_1^2=D_1. \, D_2 \geq 0$. On the other hand $S.\, D_1=0$ and $D^2=0$, so by the Hodge index theorem $S$ and $D_1$ are proportional. Let $E$ be a component of $S$ intersecting $D_0$ but not included in $D_0$. If $a \,D_1 \sim b \,S$, then $0=b \,S.\,E=a \,D_1.\, E>0$, and we are done.
\par \medskip
For the second point, let us write $D=D_1+ \ldots +D_{\lambda}$ where each $D_i$ is supported in $S_i$. Then the $D_i$'s are mutually orthogonal. Besides, $D_i. C=0$, so that by the Hodge index theorem $D_i^2 \leq 0$. Since $D^2=0$, it follows that $D_i^2=0$ for all $i$. 
\par \medskip
We pick an $i$ and write $D_i=D$ and $S_i=S$. Then there exists integers $a$ and $b$ such that $a D \sim b S$. Therefore, if $D=\sum \beta_q \,E_q$,  $\sum_q (a \alpha_q-b \beta_q) \,E_q=0 $ in $\mathrm{NS}\,(X)$. By Lemma \ref{libre}, $a \alpha_q-b \beta_q=0$ for all $q$, so that $b$ divides $a \alpha_q$ for all $q$. By Lemma \ref{woodloot}, $b$ divides $a$. If $b=ac$, then $\beta_q=c \alpha_q$ for all $q$, so that $D=cS$. 
\end{proof}
\par \medskip
Let $\rho \colon X \rightarrow \mathbb{P}^1$ be the Kodaira map of $|-mK_X|$, and $\xi$ be the generic point of $\mathbb{P}^1$. We denote by $\mathfrak{X}$ the algebraic variety $\rho^{-1}(\xi)$, which is a smooth elliptic curve over the field $\mathbb{C}(t)$.
\par \medskip
Let $\mathcal{N}$ be the kernel of the natural restriction map ${\mathfrak{t}} \colon \mathrm{Pic}\,(X) \rightarrow\mathrm{Pic}\,(\mathfrak{X})$. The image of $\mathfrak{t}$ is the set of divisors on $\mathfrak{X}$ defined over the field $\mathbb{C}(t)$, denoted by $\mathrm{Pic}\, (\mathfrak{X}/\C(t))$. 
\par \medskip
The algebraic group $\mathrm{Pic}_0(\mathfrak{X})$ acts naturally on $\mathfrak{X}$, and this action is simple and transitive over any algebraic closure of $\C(t)$.

\begin{proposition} \label{sept}
If $S_1, \ldots, S_{\lambda}$ are the reducible fibers of the pencil $|-mK_X|$ and $\mu_j$ denotes the number of components of each curve $S_j$, then
$
\mathrm{rk}\, \mathcal{N}=1+\sum_{i=1}^{\lambda} \,\{\mu_i- 1\}.
$
\end{proposition}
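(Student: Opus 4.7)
The plan is to identify $\mathcal{N}$ with the subgroup of $\mathrm{Pic}\,(X)$ generated by classes of vertical divisors (i.e., divisors supported on fibers of $\rho$) and then to determine its rank via explicit generators and relations. The identification is standard: if $[D] \in \mathcal{N}$, then the restriction $D|_{\mathfrak{X}}$ is a principal divisor $\mathrm{div}_{\mathfrak{X}}(g)$ for some $g \in \C(\mathfrak{X}) = \C(X)$, and $D - \mathrm{div}_X(g)$ restricts to zero on $\mathfrak{X}$, hence is supported on fibers; the reverse inclusion is obvious.

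Since every smooth or irreducible singular fiber has class $[F] = -mK_X = m[\mathfrak{D}]$, the group $\mathcal{N}$ is generated by $[\mathfrak{D}] = -K_X$ together with the components $[E_{i,j}]$ of the reducible fibers, giving $1 + \sum_i \mu_i$ generators. The $\lambda$ natural relations
\[
\sum_{j=1}^{\mu_i}\alpha_{i,j}[E_{i,j}] = m[\mathfrak{D}], \qquad i=1, \ldots, \lambda,
\]
come from the linear equivalence $[S_i] = [F]$, and are manifestly $\Z$-linearly independent in the free group on the chosen generators. It therefore suffices to prove that the full module of relations has rank at most $\lambda$; the rank count then yields $\mathrm{rk}\,\mathcal{N} = (1 + \sum_i \mu_i) - \lambda = 1 + \sum_i(\mu_i - 1)$.

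For the upper bound, let $D = c\mathfrak{D} + \sum_{i,j} c_{i,j} E_{i,j}$ with $D \sim 0$, and set $D_i := \sum_j c_{i,j} E_{i,j}$ supported on $S_i$. Proposition \ref{dix} gives $\mathfrak{D}^2 = K_X^2 = 0$ and $\mathfrak{D}.E_{i,j} = 0$ for all $i,j$ (using $E_{i,j}.F = 0$ even when $E_{i,j}$ sits inside a multiple fiber), while components of distinct $S_i$'s are disjoint; consequently
\[
0 = D^2 = \sum_{i=1}^{\lambda} D_i^2.
\]
Each $D_i . F = 0$ by Proposition \ref{dix}(2), so the Hodge index theorem forces $D_i^2 \leq 0$, and hence $D_i^2 = 0$. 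Lemma \ref{libre}(2) then implies that $D_i$ is an integer multiple of the primitive divisor representing its fiber (which is $S_i$ when $S_i$ is primitive, and $\mathfrak{D}$ in the exceptional case $S_i = m\mathfrak{D}$). Substituting back, $D \sim N \mathfrak{D}$ for some integer $N$ depending linearly on $c$ and on $\lambda$ integer parameters; non-divisibility of $[\mathfrak{D}] = -K_X$ in $\mathrm{NS}\,(X)$ (Lemma \ref{hehehe}(3)) yields $N = 0$, and this is a single $\Z$-linear constraint. Thus the space of relations is parameterized by $\lambda$ integers, so has rank at most $\lambda$.

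The main obstacle is the passage from the numerical vanishing $D_i^2 = 0$ to the structural statement that $D_i$ is an integer multiple of a primitive fiber class, which relies on combining the Hodge index theorem with Lemma \ref{libre}; a secondary bookkeeping subtlety arises if the reduced multiple fiber $\mathfrak{D}$ happens to be reducible, in which case one works with $\mathfrak{D}$ in place of $S_{i_0} = m\mathfrak{D}$ as the primitive representative, but the final rank count is unaffected.
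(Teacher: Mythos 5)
Your proof is correct and follows essentially the same route as the paper: the same generators ($\mathfrak{D}$ together with the components of the reducible fibers), the same $\lambda$ relations, and the same argument that any relation decomposes as $\sum_i D_i$ with each $D_i^2=0$, whence Lemma \ref{libre}(2) applies. Your only (harmless) extras are the explicit justification that $\mathcal{N}$ is generated by vertical classes and the remark about the multiple fiber; note only that the final step $N\mathfrak{D}\sim 0 \Rightarrow N=0$ uses that $-K_X$ is a nonzero class in the torsion-free group $\mathrm{NS}\,(X)$, not its non-divisibility.
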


\begin{proof}
The group $\mathcal{N}$ is generated by $\mathfrak{D}$ and the classes of the reducible components of $|-mK_X|$.
\par \medskip
We claim that the module of relations between these generators is generated by the relations 
$\alpha_1 [E_1] + \ldots + \alpha_{\nu} [E_{\nu}]=m[\mathfrak{D}]$ where $\alpha_1 E_1 + \cdots + \alpha_s E_s$ is a reducible member of $|-mK_X|$. 
\par \medskip
Let $D$ be of the form $a \mathfrak{D}+D_1+ \cdots + D_{\lambda}$ where each $D_i$ is supported in $S_i$, and assume that $D \sim 0$. Then $(D_1+ \cdots + D_{\lambda})^2=0$. Thanks to Lemma \ref{libre} (2), each $D_i$ is equal to $n_i S_i$ for some $n_i$ in $\Z$. Then $a+m \,\{\sum_{i=1}^{\lambda} n_i \}=0$, and
\[
a \mathfrak{D}+D_1+ \cdots + D_{\lambda}=\sum_{i=1}^{\lambda} n_i \,(S_i-m \mathfrak{D}).
\]
We also see easily that these relations are linearly independent over $\Z$. Thus, since the number of generators is $1+\sum_{i=1}^{\lambda} \mu_i$, we get the result.
\end{proof}
\begin{corollary} \label{sympa}
We have the inequality $\sum_{i=1}^{\lambda} \,\{\mu_i-1\} \leq 8$. Besides, if $\sum_{i=1}^{\lambda} \,\{\mu_i-1\}=8$, every automorphism of $X$ acts finitely on $\mathrm{NS}\,(X)$.
\end{corollary}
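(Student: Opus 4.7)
The plan is to combine Proposition \ref{sept} with the signature $(1,9)$ of the intersection form on $\mathrm{Pic}(X)$ established in Proposition \ref{dix}. For the first assertion, I would show that the intersection form restricted to $\mathcal{N}$ is negative semi-definite with one-dimensional radical spanned by $\mathfrak{D}$. Indeed, any element of $\mathcal{N}$ can be written rationally as $a\mathfrak{D} + \sum_j D_j$ with each $D_j$ supported in a distinct reducible fiber $S_j$; since $\mathfrak{D}$ is isotropic and orthogonal to every fiber component and distinct fibers are mutually orthogonal, the self-intersection reduces to $\sum_j D_j^2$, and on each $S_j$ the intersection matrix is negative semi-definite with one-dimensional kernel $\mathbb{Q} S_j \subset \mathbb{Q}\mathfrak{D}$ (since $S_j^2=0$ and $S_j$ is proportional to $\mathfrak{D}$). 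Hence $D^2 \leq 0$, with equality only when $D \in \mathbb{Q}\mathfrak{D}$. A negative semi-definite subspace of a $(1,9)$-form has rank at most $9$, so $\mathrm{rk}\,\mathcal{N} \leq 9$, and Proposition \ref{sept} yields $\sum_{i=1}^{\lambda}(\mu_i-1) \leq 8$.

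For the second assertion, suppose equality holds, so $\mathrm{rk}\,\mathcal{N}=9$ and $\mathcal{N}_{\mathbb{Q}}=\mathfrak{D}^\perp$. Let $f \in \mathrm{Aut}(X)$. Then $f^*\mathfrak{D}=\mathfrak{D}$ since $K_X$ is non-divisible by Lemma \ref{hehehe} and $\mathfrak{D}=-K_X$ in $\mathrm{Pic}(X)$; furthermore $f^*$ preserves $\mathcal{N}$ because $f$ preserves the canonical fibration $|-mK_X|$, and the action of $f^*$ on the torsion-free part of the rank-one quotient $\mathrm{Pic}(X)/\mathcal{N}$ is trivial since the degree of restriction to a generic fiber is preserved by $f^*$. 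The induced action on the rank-$8$ negative-definite lattice $\mathcal{N}/\mathbb{Z}\mathfrak{D}$ factors through its finite orthogonal group. After replacing $\mathrm{Aut}(X)$ by a finite-index subgroup $G$ on which all the induced actions above are trivial, each $f \in G$ acts on $\mathrm{NS}(X)$ as a unipotent isometry satisfying $(f^*-\mathrm{id})^3=0$.

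The main obstacle is then to rule out the non-trivial unipotent case, which by Lemma \ref{fini}(2) would force $f^*$ to be parabolic, with a power $(f^*)^k \in \mathcal{T}_\mathfrak{D}$ by Proposition \ref{gauss}(2). The geometric picture, to be developed in the remainder of the paper as part of Theorem \ref{Second}, is that every parabolic automorphism of a Halphen surface acts by translation along the fibers of $|-mK_X|$ by an infinite-order $\mathbb{C}(t)$-point of the Mordell-Weil group of the relative Jacobian of $\mathfrak{X}$. In the equality case, the exact sequence $0 \to \mathcal{N} \to \mathrm{Pic}(X) \to \mathrm{image}(\mathfrak{t}) \to 0$ gives $\mathrm{rk}\,\mathrm{image}(\mathfrak{t}) = 1$, and since every Mordell-Weil class lifts to a divisor on $X$, the Mordell-Weil rank of $\mathfrak{X}/\mathbb{C}(t)$ must vanish. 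All sections being torsion, no infinite-order parabolic translation can occur, hence $f^*=\mathrm{id}$ on $G$, and Burnside's theorem (Lemma \ref{burnside}) concludes that the image of $\mathrm{Aut}(X) \to \mathrm{GL}(\mathrm{NS}(X))$ is finite.
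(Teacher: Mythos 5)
Your first assertion is proved correctly: the negative semi-definiteness of the intersection form on $\mathcal{N}$ with radical $\mathbb{Q}\mathfrak{D}$ does give $\mathrm{rk}\,\mathcal{N}\leq 9$, hence the bound via Proposition \ref{sept}. The paper gets the same bound in one line from the inclusion $\mathcal{N}\subset K_X^{\perp}$ (which holds because $\mathfrak{D}$ and every fiber component are orthogonal to $K_X$ by Proposition \ref{dix}), $K_X^{\perp}$ being a rank $9$ sublattice; your argument is correct but does more work than is needed.

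The second assertion is where there is a genuine gap. After reducing to a finite-index subgroup $G$ acting unipotently, you rule out a nontrivial parabolic element by appealing to ``the geometric picture, to be developed in the remainder of the paper,'' namely that every parabolic automorphism of a Halphen surface acts by translation along the fibers by an infinite-order Mordell--Weil point. This is both a forward reference (the identification $\widetilde{\mathrm{Aut}}\,(X)=\mathrm{Aut}(\mathfrak{X}/\C(t))$ and the surjectivity of $\mathfrak{t}$ onto $\mathrm{Pic}(\mathfrak{X}/\C(t))$ only appear in the proof of Theorem \ref{penible}, and Corollary \ref{sympa} is itself quoted later in Corollary \ref{hapff}, so one must be careful about circularity) and, more seriously, an unjustified claim: for an automorphism to act by a Mordell--Weil translation one needs an iterate of it to preserve the fibration \emph{fiberwise}, i.e.\ to act with finite order on the base $\mathbb{P}^1$. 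This is automatic when $\lambda\geq 3$ but fails for general automorphisms when $\lambda=2$ --- and the Gizatullin surfaces with $\mu_1+\mu_2=10$, i.e.\ $\sum_i\{\mu_i-1\}=8$, are precisely an instance of the equality case you are treating, where $\widetilde{\mathrm{Aut}}\,(X)$ has infinite index in $\mathrm{Aut}\,(X)$ by Proposition \ref{waza}. So the step you defer is exactly the one that needs an argument. The paper's finish is purely lattice-theoretic and avoids all of this: since $f$ permutes the finitely many irreducible components of the reducible fibers and fixes $\mathfrak{D}$, an iterate of $f^*$ fixes $\mathcal{N}$ \emph{pointwise}; in the equality case $\mathcal{N}_{\mathbb{Q}}=(K_X^{\perp})_{\mathbb{Q}}$, and an isometry of a signature $(1,9)$ lattice fixing pointwise the orthogonal of the isotropic vector $K_X$ is the identity (by Proposition \ref{commutatif}, such an element lies in $\mathcal{T}_{K_X}$ and its image under $\varphi$ is orthogonal to all of $K_X^{\perp}\cap\eta^{\perp}$, hence zero). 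You had all the ingredients for this after your reduction to $G$; replacing the Mordell--Weil detour by this observation would make the proof complete and self-contained.
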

\begin{proof} We remark that $\mathcal{N}$ lies in ${K}_X^{\perp}$, which is a lattice of rank $9$ in $\mathrm{Pic}\,(X)$. This yields the inequality $\sum_{i=1}^{\lambda} \,(\mu_i-1) \leq 8$. 
\par \medskip
Assume $\mathcal{N}=K_X^{\perp}$, and let $f$ be an automorphism of $X$. Up to replacing $f$ by an iterate, we can assume that $\mathcal{N}$ is fixed by $f$. Thus $f^*$ is a parabolic translation leaving the orthogonal of the isotropic invariant ray $\mathbb{R} K_X$ pointwise fixed. It follows that $f$ acts trivially on $\mathrm{Pic}\,(X)$.
\end{proof}
Lastly, we prove that there is a major dichotomy among Halphen surfaces. Since there is no proof of this result in Gizatullin's paper, we provide one for the reader's convenience. 
\par \medskip
Let us introduce some notation: let $\mathrm{Aut}_0(X)$ be the connected component of $\mathrm{id}$ in $\mathrm{Aut}\, (X)$ and $\widetilde{\mathrm{Aut}}\, (X)$ be the group of automorphisms of $X$ preserving fiberwise the elliptic fibration.
\begin{proposition}[see {\cite[Prop. B]{GIZ}}] \label{waza}
Let $X$ be a Halphen surface. Then $X$ has at least two degenerate fibers. The following are equivalent:
\begin{enumerate}
\item[(i)] $X$ has \textit{exactly} two degenerate fibers.
\item[(ii)] $\mathrm{Aut}_0(X)$ is an algebraic group of positive dimension.
\item[(iii)] $\widetilde{\mathrm{Aut}}\, (X)$ has infinite index in $\mathrm{Aut}\, (X)$.
\end{enumerate}
Under any of these conditions, $\mathrm{Aut}_0(X) \simeq\C^{\times}$, and $\widetilde{\mathrm{Aut}}\, (X)$ is finite, and $\mathrm{Aut}_0(X)$ has finite index in $\mathrm{Aut}\, (X)$.
\end{proposition}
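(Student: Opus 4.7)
The plan is to exploit the $\mathrm{Aut}(X)$-equivariance of the canonical fibration $\pi\colon X \to \mathbb{P}^1$ given by $|-mK_X|$, yielding a homomorphism $\rho\colon \mathrm{Aut}(X) \to \mathrm{PGL}(2)$ with $\ker\rho = \widetilde{\mathrm{Aut}}(X)$, whose image stabilizes the finite set $\Sigma \subset \mathbb{P}^1$ of critical values of $\pi$. The inequality $|\Sigma| \geq 2$ will follow from $\chi_{\mathrm{top}}(X) = 12$ (rational elliptic surface) combined with Kodaira's classification: the only Kodaira types with $\chi = 12$ are $I_{12}$ and $I_6^*$, both having $\mu - 1 > 8$, which violates Corollary~\ref{sympa} and rules out a single-fiber configuration.

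For the implications (iii) $\Rightarrow$ (i) and (ii) $\Rightarrow$ (i), the key remark is that the stabilizer in $\mathrm{PGL}(2)$ of a subset of size $\geq 3$ is finite. The hypothesis of (iii) directly forces $\rho(\mathrm{Aut}(X))$ to be infinite, so $|\Sigma|\leq 2$. For (ii), I would use the auxiliary fact that $\mathrm{Aut}_0(X) \cap \widetilde{\mathrm{Aut}}(X)$ is $0$-dimensional: a connected fiberwise action restricts on the generic fiber $\mathfrak{X}$ to translations (the identity component of an elliptic curve's automorphism group), hence defines a section of the associated Jacobian fibration and embeds into the finitely-generated Mordell--Weil group of $\pi$. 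Consequently, a positive-dimensional $\mathrm{Aut}_0(X)$ has positive-dimensional image in $\mathrm{PGL}(2)$ stabilizing $\Sigma$, again forcing $|\Sigma|\leq 2$.

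The converse (i) $\Rightarrow$ (ii) and (iii) is the main obstacle, and its heart is the construction of a one-parameter subgroup $\mathbb{C}^\times \subseteq \mathrm{Aut}_0(X)$ acting non-trivially on the base. I would first show that $\pi$ is isotrivial. The local monodromies around the two fibers in $\Sigma$ are mutually inverse in $\mathrm{SL}(2,\mathbb{Z})$, so share the same order; a case check against $\chi_{\mathrm{top}}(X) = 12$ and Corollary~\ref{sympa} rules out any fiber of type $I_n$ or $I_n^*$ with $n \geq 1$, leaving precisely the pairings $(II,II^*)$, $(III,III^*)$, $(IV,IV^*)$, $(I_0^*,I_0^*)$, together with their Halphen multiple-fiber variants. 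All of these have finite local monodromy, so the $j$-invariant is constant and all smooth fibers are isomorphic to a fixed elliptic curve $E$. Setting $\Sigma = \{0,\infty\}$ and letting $n$ denote the monodromy order, the cyclic base change $z \mapsto z^n$ trivializes $\pi$ over $\mathbb{C}^\times$ to $E \times \mathbb{C}^\times$; the standard $\mathbb{C}^\times$-action on the target $\mathbb{P}^1$ fixing $\Sigma$ commutes with the cyclic deck group, so descends to $X$ and produces the desired subgroup. The technical delicacy lies in continuing the descent across the two singular fibers to a regular action on all of $X$, a standard construction for isotrivial elliptic surfaces but requiring some care.

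Finally, under (i) the above enumeration gives $s = 8$ in every case, so Shioda--Tate forces the Mordell--Weil rank $8 - s = 0$ and $\mathrm{MW}$ is finite. Hence $\widetilde{\mathrm{Aut}}(X)$, an extension of a subgroup of $\mathrm{MW}$ by the finite group of $\mathbb{C}(t)$-automorphisms of the generic elliptic curve modulo translations, is itself finite. The injection $\mathrm{Aut}_0(X) \hookrightarrow \mathrm{Aut}(\mathbb{P}^1,\Sigma)^0 \simeq \mathbb{C}^\times$ from the previous step is saturated by the just-constructed copy, so $\mathrm{Aut}_0(X) \simeq \mathbb{C}^\times$. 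And $[\mathrm{Aut}(X):\mathrm{Aut}_0(X)]$ is bounded by $|\widetilde{\mathrm{Aut}}(X)| \cdot [\mathrm{Aut}(\mathbb{P}^1,\Sigma):\mathbb{C}^\times] = 2\,|\widetilde{\mathrm{Aut}}(X)|$, which is finite.
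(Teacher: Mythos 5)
Your proposal is correct in substance and follows the same overall skeleton as the paper's proof: the homomorphism to $\mathrm{PGL}(2)$ with kernel $\widetilde{\mathrm{Aut}}(X)$, the finiteness of stabilizers of $\geq 3$ points, the countability (hence $0$-dimensionality) of the fiberwise automorphism group via the Mordell--Weil/relative Picard group, and the construction of the $\C^{\times}$-action by descending the obvious action on $E \times \C^{\times}$ through a cyclic cover. Where you genuinely diverge is in the two geometric inputs. For the lower bound on the number of degenerate fibers and for isotriviality under (i), the paper uses the orbifold period map $U \to \mathcal{M}_1$: since the orbifold universal cover of $\mathcal{M}_1$ is $\mathfrak{h}$ and $\widetilde{U}$ is $\P^1$ or $\C$ when $\#\mathcal{Z} \leq 2$, the $j$-map is constant by Liouville, and the case $\#\mathcal{Z} \leq 1$ is excluded because $\mathrm{H}^1(U,H)=0$ would make $X$ birational to $E \times \P^1$. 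You instead use $\chi_{\mathrm{top}}(X)=12$, Kodaira's table, and the relation between the two local monodromies, cross-checked against Corollary \ref{sympa}. Both are valid; your route is more computational but buys the explicit list $(II,II^*)$, $(III,III^*)$, $(IV,IV^*)$, $(I_0^*,I_0^*)$ and hence $s=8$ and the finiteness of $\widetilde{\mathrm{Aut}}(X)$ directly, facts the paper only records in the remark following the proposition ($\mu_1+\mu_2=10$). Two small points to watch: your case analysis in fact forces the index to be $1$ (no multiple fiber survives the monodromy/Euler-number constraints), which is what legitimizes trivializing the family over $\C^{\times}$ by a homomorphism $\pi_1(\C^{\times}) \to \mathrm{Aut}(E)$; and the extension of the $\C^{\times}$-action across the two singular fibers, which you rightly flag, is supplied by the uniqueness of relatively minimal elliptic fibrations (a fiber-preserving birational self-map that is biregular over $U$ is biregular everywhere) --- the paper is equally brisk on this point, arguing instead via the explicit desingularization of $(\P^1 \times E)/(\Z/n\Z)$.
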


\begin{proof}
Let $\mathcal{Z}$ be the finite subset of $\mathbb{P}^1$ consisting of points $z$ such that $\pi$ is not smooth at some point of the fiber $X_z$, and $U$ be the complementary set of $\mathcal{Z}$ in $\mathbb{P}^1$. The points of $\mathcal{Z}$ correspond to the degenerate fibers of $X$.
\par \medskip
Let $\mathcal{M}_1$ be the moduli space of elliptic curves, considered as a complex orbifold. It is the quotient orbifold $\mathfrak{h} / \mathrm{SL}(2; \mathbb{Z})$ and its coarse moduli space $|\mathcal{M}_1|$ is $\C$. The elliptic surface over $U$ yields a morphism of orbifolds $\phi \colon U \rightarrow \mathcal{M}_{1}$, hence a morphism $| \phi | \colon U \rightarrow \mathbb{C}$. The orbifold universal cover of $\mathcal{M}_{1}$ is $\mathfrak{h}$, so that $|\phi|$ induces a holomorphic map $\widetilde{U} \rightarrow \mathfrak{h}$.
\par \medskip
If $\# \mathcal{Z} \in \{0, 1, 2\}$, then $\widetilde{U}=\mathbb{P}^1$ or $\widetilde{U}=\C$ and $|\phi|$ is constant. This means that all fibers of $X$ over $U$ are isomorphic to a fixed elliptic curve $E$. 
\par \medskip
Let $H$ be the isotropy group of $\mathcal{M}_1$ at $E$, it is a finite group of order $2$, $4$ or $6$. Then $\phi$ factorizes as the composition $U \rightarrow {B}H \rightarrow \mathcal{M}_1$ where ${B} H$ is the orbifold $\bullet_{H}$. The stack morphisms from $U$ to ${B}H$ are simply $H$-torsors on $U$, and are in bijection with $\mathrm{H}^1(U, H)$.
\par \medskip
In the case $\# \mathcal{Z} \in \{0, 1\}$, that is $U=\mathbb{P}^1$ or $U=\mathbb{C}$, then $\mathrm{h}^1(U, H)=0$. Thus $X$ is birational to $E \times \mathbb{P}^1$ which is not possible for rational surfaces. This proves the first part of the theorem.
\par \medskip
(iii) $\Rightarrow$ (i)
 We have an exact sequence
\[
0 \rightarrow \widetilde{\mathrm{Aut}}\,(X) \rightarrow \mathrm{Aut}\,(X) \xrightarrow{\kappa} \mathrm{Aut}\, (\mathbb{P}^1)
\]
The image of $\kappa$ must leave the set $\mathcal{Z}$ globally fixed. If $\# \mathcal{Z} \geq 3$, then the image of $\kappa$ is finite, so that $\widetilde{\mathrm{Aut}}\,(X)$ has finite index in $\mathrm{Aut}\,(X)$.
\par \medskip
(i) $\Rightarrow$ (ii) In this situation, we deal with the case $U=\C^{\times}$. The group $\mathrm{H}^1(\mathbb{C}^{\times}, H)$ is isomorphic to $H$. For any element $h$ in $H$, let $n$ be the order of $h$ and $\zeta$ be a $n$-th root of unity. The cyclic group $\Z / n \Z$ acts on $\mathbb{C}^{\times} \times E$ by the formula $p.(z, e)=(\zeta^{p} z, h^p . e)$. The open elliptic surface over $\mathbb{C}^{\times}$ associated with the pair $(E, h)$ is the quotient of $\mathbb{C}^{\times} \times E$ by $\Z/n\Z$.
We can compactify everything: the elliptic surface associated with the pair $(E, h)$ is obtained by desingularizing the quotient of $\mathbb{P}^1 \times E$ by the natural extension of the $\Z / n\Z$-action defined formerly. By this construction, we see that the $\mathbb{C}^{\times}$ action on $\pi^{-1}(U)$ extends to $X$. Thus $\mathrm{Aut}_0(X)$ contains $\C^{\times}$.
\par \medskip
(i) $\Rightarrow$ (iii) We have just proven in the previous implication that if $X$ has two degenerate fibers, then the image of $\kappa$ contains $\C^{\times}$. Therefore $\widetilde{\mathrm{Aut}}\,(X)$ has infinite index in $\mathrm{Aut}\,(X)$.
\par \medskip
(ii) $\Rightarrow$ (i) We claim that $\widetilde{\mathrm{Aut}}\, (X)$ is countable. Indeed, $\widetilde{\mathrm{Aut}}\, (X)$ is a subgroup of $\mathrm{Aut}\, (\mathfrak{X}/ \C(t))$ which contains $\mathrm{Pic}\, (\mathfrak{X}/ \C(t))$ as a finite index subgroup; and $\mathrm{Pic}\, (\mathfrak{X}/ \C(t))$ is a quotient of $\mathrm{Pic}\,(X)$ which is countable since $X$ is rational. Therefore, if $\mathrm{Aut}_0(X)$ has positive dimension, then the image of $\kappa$ is infinite. The morphism $|\phi| \colon U \rightarrow \mathbb{C}$ is invariant by the action of $\mathrm{im}\, (\kappa)$, so it must be constant. As we have already seen, this implies that $X$ has two degenerate fibers.
\par \medskip
It remains to prove the last statement of the Proposition. Since $\widetilde{\mathrm{Aut}}\,(X)$ is a countable group, $\widetilde{\mathrm{Aut}}\,(X) \cap \mathrm{Aut}_0 (X)=\{ \mathrm{id} \}$. Thus, $\mathrm{Aut}_0(X) \simeq \kappa \left(\mathrm{Aut}_0(X) \right) \simeq \mathbb{C}^{\times}$. Let $\varepsilon$ denote the natural representation of $\mathrm{Aut}\,(x)$ in $\mathrm{NS}(X)$. Since $\mathrm{Aut}_0(X) \subset \mathrm{ker}\,(\varepsilon)$, $\mathrm{ker}\,(\varepsilon)$ is infinite. Thanks to \cite{HH}, $\mathrm{im}(\varepsilon)$ is finite. To conclude, it suffices to prove that $\mathrm{Aut}_0(X)$ has finite index in $\mathrm{ker}\,(\varepsilon)$. Any smooth curve of negative self-intersection must be fixed by $\mathrm{ker}\,(\varepsilon)$. Let $\mathbb{P}^2$ be the minimal model of $X$ (which is either $\mathbb{P}^2$ or $\mathbb{F}_n$) and write $X$ as the blowup of $\mathbb{P}^2$ along a finite set $Z$ of (possibly infinitly near) points. Since $\mathrm{Aut}_0(\mathbb{P}^2)$ is connected, $\mathrm{ker}\,(\varepsilon)$ is the subgroup of elements of $\mathrm{Aut}\,(\mathbb{P}^2)$ fixing $Z$. This is a closed algebraic subgroup of $\mathrm{Aut}\,(\mathbb{P}^2)$, so $\mathrm{ker}\,(\varepsilon)_0$ has finite index in $\mathrm{ker}\,(\varepsilon)$. Since $\mathrm{ker}\,(\varepsilon)_0=\mathrm{Aut}_0(X)$, we get the result.

\end{proof}
\begin{remark}
Minimal elliptic surfaces with two degenerate fibers are called Gizatullin surfaces, they are exactly the rational surfaces possessing a nonzero regular vector field. They are Halphen surfaces of index $1$, their detailed construction is given in \cite[\S 2]{GIZ}. They have two reducible fibers $S_1$ and $S_2$ which satisfy $\mu_1+ \mu_2=10$, and $\mathrm{Aut}_0(X)$ has always finite index in $\mathrm{Aut}\,(X)$.
\end{remark}

\subsection{The main construction} \label{7.3}
In this section, we construct explicit parabolic automorphisms of Halphen surfaces.
\begin{theorem} \label{penible}
Let $X$ be a Halphen surface such that $\sum_{i=1}^{\lambda} \,\{\mu_i-1\} \leq 7$. Then there exists a free abelian group $G$ of finite index in ${\mathrm{Aut}}\,(X)$ of rank $8-\sum_{i=1}^{\lambda} \,\{\mu_i-1\}$ such that any non-zero element in $G$ is a parabolic automorphism acting by translation on each fiber of the fibration.
\end{theorem}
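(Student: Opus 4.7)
The plan is to realise $G$ as a finite-index subgroup of the Mordell--Weil group of the elliptic fibration $\pi\colon X\to\mathbb{P}^1$. First I would reduce to fibre-preserving automorphisms. The hypothesis $s\le 7$ excludes the Gizatullin case of Proposition~\ref{waza} (where $\lambda=2$ forces $\mu_1+\mu_2=10$ and $s=8$), so $\lambda\ge 3$. The image of the natural homomorphism $\mathrm{Aut}(X)\to\mathrm{Aut}(\mathbb{P}^1)$ must permute the critical set $\mathcal{Z}$ of $\pi$, which has $\lambda\ge 3$ elements, so this image is finite and $\widetilde{\mathrm{Aut}}(X)$ has finite index in $\mathrm{Aut}(X)$. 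Since we are not in case (ii) of Proposition~\ref{waza}, $\mathrm{Aut}_0(X)$ is trivial; the argument at the end of the proof of that proposition then shows that $\mathrm{ker}(\varepsilon)_0=\mathrm{Aut}_0(X)$ inside the algebraic group fixing the blown-up set on the minimal model, so $\mathrm{ker}(\varepsilon)$ is finite.

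Next I would construct the translations. Let $\mathfrak{X}/\mathbb{C}(t)$ be the generic fibre, $J$ its Jacobian, and $T:=J(\mathbb{C}(t))=\mathrm{Pic}^0(\mathfrak{X}/\mathbb{C}(t))$ the Mordell--Weil group, which acts on $\mathfrak{X}$ by translations; the quotient of $\mathrm{Aut}(\mathfrak{X}/\mathbb{C}(t))$ by $T$ embeds in $\mathrm{Aut}(J,0)$ and is finite of order at most six. For each $\tau\in T$, translation defines a biregular automorphism of the open part of $X$ lying over the smooth locus of $\pi$, hence a birational self-map of $X$; since $X$ is a relatively minimal elliptic surface, this birational map extends to a biregular automorphism of $X$, giving an inclusion $T\hookrightarrow\widetilde{\mathrm{Aut}}(X)$ of finite index, hence $T$ has finite index in $\mathrm{Aut}(X)$ as well.

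The rank count is then immediate: every $\mathbb{C}(t)$-line bundle on $\mathfrak{X}$ extends to $X$, so the restriction $\mathfrak{t}\colon\mathrm{Pic}(X)\to\mathrm{Pic}(\mathfrak{X}/\mathbb{C}(t))$ is surjective with kernel $\mathcal{N}$ of rank $1+s$ by Proposition~\ref{sept}; and since $\deg\mathfrak{t}(D)=-m\,D.K_X$, the preimage of $\mathrm{Pic}^0$ under $\mathfrak{t}$ is precisely $K_X^{\perp}$, so $T\cong K_X^{\perp}/\mathcal{N}$ is finitely generated of rank $9-(1+s)=8-s$. I would take $G$ to be any torsion-free finite-index subgroup of $T$, so that $G\simeq\mathbb{Z}^{8-s}$ still has finite index in $\mathrm{Aut}(X)$, and every non-zero $f\in G$ acts by translation on every fibre by construction. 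Such an $f$ has infinite order in $\mathrm{Aut}(X)$ because it is a non-torsion translation of $\mathfrak{X}$; since $\mathrm{ker}(\varepsilon)$ is finite, $f^{*}$ has infinite order on $\mathrm{NS}(X)$, ruling out the elliptic case. The fibre class $[C]$ is isotropic and fixed by $f^{*}$, while a hyperbolic isometry (Proposition~\ref{classification}) fixes only its two isotropic eigenlines with eigenvalues $\lambda^{\pm 1}\neq 1$, so $f^{*}$ cannot be hyperbolic either and must be parabolic.

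The main obstacle is the biregular extension step in the second paragraph: showing that every $\tau\in T$, a priori defined only as an automorphism of the generic fibre, lifts to a global automorphism of $X$. This rests on the classical fact that a birational map between relatively minimal elliptic surfaces commuting with the projection to the base is automatically biregular, a fact which uses decisively the absence of $(-1)$-curves inside the fibres (Proposition~\ref{dix}(2)). Everything else is either a rank computation already contained in Proposition~\ref{sept}, or a routine application of the trichotomy of Proposition~\ref{classification} together with the finiteness of $\mathrm{ker}(\varepsilon)$.
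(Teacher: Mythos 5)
Your proposal is correct and follows essentially the same route as the paper's proof: identify $\mathrm{Pic}_0(\mathfrak{X}/\mathbb{C}(t))$ with $K_X^{\perp}/\mathcal{N}$ via the trace map, use Proposition \ref{sept} for the rank count $8-s$, invoke Proposition \ref{waza} to see that the fibre-preserving subgroup has finite index (the $s\le 7$ hypothesis excluding the two-degenerate-fibre case), and use the isotropic $f^*$-fixed class $K_X$ to rule out hyperbolicity. The only cosmetic difference is that the paper excludes the elliptic case by observing directly that $g$ translates $\mathrm{Pic}(\mathfrak{X})$ by the non-torsion element $\mathfrak{t}(g)$ and hence has infinite order on $\mathrm{Pic}(X)$, rather than routing through the finiteness of $\ker(\varepsilon)$.
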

\begin{proof}Let $\widetilde{\mathrm{Aut}}(X)$ be the subgroup of $\mathrm{Aut}\,(X)$ corresponding to automorphisms of $X$ preserving the elliptic fibration fiberwise. By \cite[Chap. II \S 10 Thm.1]{DS}, any automorphism of $\mathfrak{X}$ defined over $\mathbb{C}(t)$ extends to an automorphism of $X$. Thus $\widetilde{\mathrm{Aut}}\,(X)=\mathrm{Aut} (\mathfrak{X}/ \mathbb{C}(t))$.
\par \medskip
Since $\mathfrak{X}$ is a smooth elliptic curve, $\mathrm{Pic}_0 \{\mathfrak{X} / \mathbb{C}(t)\}$ has finite index in $\mathrm{Aut} (\mathfrak{X}/ \mathbb{C}(t))$, so that $\mathrm{Pic}_0 \{\mathfrak{X} / \mathbb{C}(t)\}$ has finite index in 
$\widetilde{\mathrm{Aut}}\,(X)$. 
\par \medskip

The trace morphism $\mathfrak{t} \colon \mathrm{Pic}\,(X) \rightarrow \mathrm{Pic} \{\mathfrak{X} / \mathbb{C}(t)\}$ is surjective and for any divisor $D$ in $\mathrm{Pic}\,(X)$ we have $\mathrm{deg}\, \mathfrak{t}(D)=D.C$.
Therefore 
\[
K_X^{\perp}/ \mathcal{N} \simeq \mathrm{Pic}_0 \{\mathfrak{X} / \mathbb{C}(t)\} \hookrightarrow \widetilde{\mathrm{Aut}}\,(X)
\]
where the image of the last morphism has finite index. By Proposition \ref{sept}, the rank of $\mathcal{N}$ is $\sum_{i=1}^{\lambda} (\mu_i-1) +1$, which is smaller that $8$. Let $G$ be the torsion-free part of ${K_X^{\perp}}/{\mathcal{N}}$; the rank of $G$ is at least one. Any $g$ in $G$ acts by translation on the generic fiber $\mathfrak{X}$ and this translation is of infinite order in $\mathrm{Aut}\, (\mathfrak{X})$. Beside, via the morphism $\mathrm{Pic}\,(X) \rightarrow \mathrm{Pic}\,(\mathfrak{X})$, $g$ acts by translation by $\mathfrak{tr}\,(g)$ on $\mathrm{Pic}\,(\mathfrak{X})$, so that the action of $g$ on $\mathrm{Pic}\,(X)$ has infinite order.

\par \medskip
Let $g$ in $G$, and let $\lambda$ be an eigenvalue of the action of $g$ on $\mathrm{Pic}\, (X)$, and assume that $|\lambda| > 1$. If $g^*v=\lambda v$, then $v$ is orthogonal to $K_X$ and $v^2=0$. It follows that $v$ is collinear to $K_X$ and we get a contradiction. Therefore, $g$ is parabolic.
\par \medskip
To conclude the proof it suffices to prove that $ \widetilde{\mathrm{Aut}}\,(X)$ has finite index in ${\mathrm{Aut}}\,(X)$. Assume the contrary. Then Proposition \ref{waza} implies that $X$ has two degenerate fibers, that is $X$ is a Gizatullin surface. In that case $\mu_1+\mu_2=10$ (by the explicit description of Gizatullin surfaces) and we get a contradiction.
\end{proof}
\begin{corollary} \label{hapff}
Let $X$ be a Halphen surface. The following are equivalent:
\begin{enumerate}
\item[(i)] $\sum_{i=1}^{\lambda} \{\mu_i-1\}=8$.
\item[(ii)] The group $\widetilde{\mathrm{Aut}}(X)$ is finite.
\item[(iii)] The image of $\mathrm{Aut}\, (X)$ in $\mathrm{GL}\!\left(\mathrm{NS}\,(X)\right)$ is finite.
\end{enumerate}
\end{corollary}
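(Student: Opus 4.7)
The plan is to close the cycle (i) $\Rightarrow$ (ii) $\Rightarrow$ (iii) $\Rightarrow$ (i), exploiting the isomorphism $K_X^{\perp}/\mathcal{N} \simeq \mathrm{Pic}_0\{\mathfrak{X}/\mathbb{C}(t)\}$ produced in the proof of Theorem \ref{penible}, the structural dichotomy of Proposition \ref{waza}, and the explicit construction of parabolic automorphisms given by Theorem \ref{penible} itself.

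For (i) $\Rightarrow$ (ii), I would argue by a rank count. Proposition \ref{dix} gives $\mathrm{rk}\,\mathrm{Pic}\,(X)=10$; since $K_X$ is non-zero with $K_X^2=0$, its orthogonal $K_X^{\perp}$ has rank $9$. Under (i), Proposition \ref{sept} yields $\mathrm{rk}\,\mathcal{N}=1+\sum (\mu_i-1)=9$; since $\mathcal{N}\subset K_X^{\perp}$ (the kernel of $\mathfrak{t}$ consists of classes of degree zero on $\mathfrak{X}$, hence orthogonal to $C$ and to $K_X$), the quotient $K_X^{\perp}/\mathcal{N}$ is finite. This quotient is isomorphic to $\mathrm{Pic}_0\{\mathfrak{X}/\mathbb{C}(t)\}$, which embeds with finite-index image into $\widetilde{\mathrm{Aut}}\,(X)=\mathrm{Aut}\,(\mathfrak{X}/\mathbb{C}(t))$, so $\widetilde{\mathrm{Aut}}\,(X)$ is finite.

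For (ii) $\Rightarrow$ (iii), I would split on the number $\lambda$ of reducible fibers. If $\lambda\geq 3$, the exact sequence
\[
1 \to \widetilde{\mathrm{Aut}}\,(X) \to \mathrm{Aut}\,(X) \xrightarrow{\kappa} \mathrm{Aut}\,(\mathbb{P}^1)
\]
together with the fact that $\mathrm{im}\,\kappa$ preserves the discriminant locus $\mathcal{Z}$ of the fibration forces $\mathrm{im}\,\kappa$ to be finite, so (ii) gives $\mathrm{Aut}\,(X)$ itself finite and (iii) is immediate. If $\lambda=2$, that is, $X$ is Gizatullin, then Proposition \ref{waza} provides a connected subgroup $\mathrm{Aut}_0(X)\simeq \mathbb{C}^{\times}$ of finite index in $\mathrm{Aut}\,(X)$; being connected, it acts trivially on the discrete lattice $\mathrm{NS}\,(X)$, so the image of $\mathrm{Aut}\,(X)$ in $\mathrm{GL}\!\left(\mathrm{NS}\,(X)\right)$ factors through the finite quotient $\mathrm{Aut}\,(X)/\mathrm{Aut}_0(X)$.

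For (iii) $\Rightarrow$ (i), assume that (i) fails. Corollary \ref{sympa} gives $\sum (\mu_i-1)\leq 7$, so Theorem \ref{penible} supplies a free abelian subgroup of $\mathrm{Aut}\,(X)$ of positive rank all of whose non-zero elements are parabolic. By Remark \ref{bof}, each such element acts on $\mathrm{NS}\,(X)$ with unbounded iterates, hence with infinite order, contradicting (iii). The most delicate step is the Gizatullin half of (ii) $\Rightarrow$ (iii): finiteness of $\widetilde{\mathrm{Aut}}\,(X)$ is automatic for Gizatullin surfaces and is not enough on its own to bound $\mathrm{Aut}\,(X)$, so one must genuinely use the positive-dimensional algebraic group structure of $\mathrm{Aut}_0(X)$ afforded by Proposition \ref{waza}.
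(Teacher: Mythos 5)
Your proposal is correct in substance, but it closes the triangle of equivalences along a different edge than the paper does. The paper proves (i) $\Leftrightarrow$ (ii) exactly as you do (via the finite-index inclusion of $K_X^{\perp}/\mathcal{N}$ into $\widetilde{\mathrm{Aut}}\,(X)$ and the rank count from Proposition \ref{sept}), and proves (iii) $\Rightarrow$ (i) exactly as you do (contrapositive via Theorem \ref{penible}); but for the remaining edge it invokes Corollary \ref{sympa} to get (i) $\Rightarrow$ (iii) directly, by a purely lattice-theoretic argument: when $\mathcal{N}=K_X^{\perp}$ every automorphism becomes a parabolic translation fixing the orthogonal of the invariant isotropic ray pointwise, hence acts trivially on $\mathrm{NS}\,(X)$. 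You instead prove (ii) $\Rightarrow$ (iii) geometrically, through the exact sequence $1 \to \widetilde{\mathrm{Aut}}\,(X) \to \mathrm{Aut}\,(X) \xrightarrow{\kappa} \mathrm{Aut}\,(\mathbb{P}^1)$ and Proposition \ref{waza}. Both routes are valid; the paper's is shorter because Corollary \ref{sympa} is already available, while yours has the merit of making visible \emph{why} the Neron--Severi representation is finite (either $\mathrm{Aut}\,(X)$ itself is finite, or a connected $\mathbb{C}^{\times}$ of finite index kills the representation). One imprecision to fix: your case split should be on the cardinality of the set $\mathcal{Z}$ of \emph{degenerate} fibers, not on the number $\lambda$ of \emph{reducible} fibers. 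A Halphen surface can have $\lambda=2$ reducible fibers yet three or more degenerate ones (e.g.\ an extra irreducible nodal fiber), in which case it is not Gizatullin and Proposition \ref{waza} does not supply $\mathrm{Aut}_0(X)\simeq\mathbb{C}^{\times}$; but then $\#\mathcal{Z}\geq 3$ and your first branch applies verbatim, so the repair is immediate.
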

\begin{proof}
(i) $\Leftrightarrow$ (ii) Recall (see the proof of Proposition \ref{penible}) that $K_X^{\perp}/\mathcal{N}$ has finite index in $\widetilde{\mathrm{Aut}}\, (X)$. This gives the equivalence between (i) and (ii) since $K_X^{\perp}/\mathcal{N}$ is a free group of rank $8-\sum_{i=1}^{\lambda} \{\mu_i-1\}$.
\par \medskip
(i) $\Rightarrow$ (iii) This is exactly Corollary \ref{sympa}.
\par \medskip
(iii) $\Rightarrow$ (i) Assume that $\sum_{i=1}^{\lambda} \{\mu_i-1\} \leq 7$. Then $X$ carries parabolic automorphisms thanks to Theorem \ref{penible}. This gives the required implication.
\end{proof}
Let us end this section with a particular but illuminating example: \textit{unnodal Halphen surfaces}. By definition, an unnodal Halphen surface is a Halphen surface without reducible fibers. In this case, $\mathcal{N}$ is simply the rank one module $\Z K_X$, so that we have an exact sequence
\[
0 \rightarrow \Z K_X \rightarrow K_X^{\perp} \underset{\lambda}{\hookrightarrow}  {\mathrm{Aut}}\, (X)
\] 
where the image of the last morphism has finite index. Then:

\begin{theorem} \label{classieux}
For any $\alpha$ in $K_X^{\perp}$ and any $D$ in $\mathrm{NS}\, (X)$, 
\[
\lambda_{\alpha}^*(D)=D-m\,(D.K_X)\, \alpha+\left\{m\,(D. \alpha)-\frac{m^2}{2} (D.K_X)\, \alpha^2 \right\} K_X.
\] 
\end{theorem}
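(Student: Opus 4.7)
The plan is to realize both sides of the stated identity as Eichler--Siegel transvections in the group $\mathcal{T}_{K_X}$ of parabolic translations fixing the isotropic direction $K_X$, and then to match them by comparing their action on a single divisor.

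First, I would show that $\lambda_\alpha^*$ belongs to $\mathcal{T}_{K_X}$. Since $\lambda_\alpha\in\widetilde{\mathrm{Aut}}(X)$ preserves every fiber of the elliptic fibration, $\lambda_\alpha^*$ fixes the fiber class $C=-mK_X$ (Proposition~\ref{dix}), hence fixes $K_X$. On the generic fiber $\mathfrak{X}$, the automorphism $\lambda_\alpha$ acts as the translation $\tau_{\mathfrak{t}(\alpha)}$, and translations on an elliptic curve act trivially on $\mathrm{Pic}_0$. Because $X$ is unnodal, $\mathcal{N}=\Z K_X$, so the trace map $\mathfrak{t}$ embeds $K_X^\perp/\Z K_X$ into $\mathrm{Pic}_0(\mathfrak{X}/\C(t))$ (in fact isomorphically, cf.\ the proof of Theorem~\ref{penible}). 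Consequently $\lambda_\alpha^*$ acts trivially on $K_X^\perp/\R K_X$, which together with $\lambda_\alpha^*K_X=K_X$ is precisely the defining condition for membership in $\mathcal{T}_{K_X}$.

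Next, I would verify that the right-hand side is an Eichler--Siegel transvection. Introduce
\[
T_\beta(D):=D+(D.K_X)\beta-(D.\beta)K_X-\tfrac{1}{2}(D.K_X)\beta^2\,K_X
\]
for $\beta\in K_X^\perp$. A direct bilinear expansion using $K_X^2=0$ and $\beta.K_X=0$ shows that $T_\beta$ is an isometry of $\mathrm{NS}(X)_\R$, fixes $K_X$, and acts trivially on $K_X^\perp/\R K_X$; hence $T_\beta\in\mathcal{T}_{K_X}$, and $T_\beta$ depends only on the class of $\beta$ modulo $\R K_X$. The right-hand side of the theorem is precisely $T_{-m\alpha}(D)$; integrality of the coefficient $-\tfrac{m^2}{2}(D.K_X)\alpha^2$ is ensured by the adjunction parity $\alpha^2\equiv\alpha.K_X=0\pmod 2$.

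Finally, to prove $\lambda_\alpha^*=T_{-m\alpha}$, I would use that any element of $\mathcal{T}_{K_X}$ is determined (by Proposition~\ref{commutatif}) by its action on a single class $D_0$ with $D_0.K_X\neq 0$. Applying the trace map to $T_{-m\alpha}(D_0)$ gives $\mathfrak{t}(D_0)-m(D_0.K_X)\mathfrak{t}(\alpha)$; on the other hand, $\tau_{\mathfrak{t}(\alpha)}^*$ shifts a class of degree $d$ on $\mathfrak{X}$ by $-d\cdot\mathfrak{t}(\alpha)$, and $\deg\mathfrak{t}(D_0)=D_0.C=-m(D_0.K_X)$, so $\mathfrak{t}(\lambda_\alpha^*D_0)$ takes the same form (up to the sign encoded in the embedding $K_X^\perp/\Z K_X\hookrightarrow\widetilde{\mathrm{Aut}}(X)$). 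Injectivity of $\mathfrak{t}$ on $K_X^\perp/\Z K_X$ then yields the desired equality of the two transvections, via the elementary fact that two elements of $\mathcal{T}_{K_X}$ agreeing modulo $\Z K_X$ on all of $\mathrm{Pic}(X)$ must coincide (a short isometry argument using $K_X^2=0$). The principal subtlety is the quadratic correction $-\tfrac{m^2}{2}(D.K_X)\alpha^2\,K_X$: since $\mathfrak{t}(K_X)=0$, this term is invisible both to the action on the generic fiber and to the action on $K_X^\perp$, so it cannot be read off from the geometric picture of $\lambda_\alpha$ as a fiberwise translation. Its presence is dictated purely by the requirement that the formula define an isometry, which is the structural content of the Eichler--Siegel parametrization of $\mathcal{T}_{K_X}$.
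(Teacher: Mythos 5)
Your proposal is correct and follows essentially the same route as the paper: you pin down $\lambda_\alpha^*(D)$ modulo $\ker(\mathfrak{t})=\Z K_X$ via the translation action on $\mathrm{Pic}(\mathfrak{X})$ together with $\deg\mathfrak{t}(D)=-m\,(D.K_X)$, and then determine the residual coefficient of $K_X$ from the isometry condition $\lambda_\alpha^*(D)^2=D^2$ — exactly the paper's two steps. The Eichler--Siegel/$\mathcal{T}_{K_X}$ packaging and the reduction to a single $D_0$ via Proposition~\ref{commutatif} are a clean reformulation rather than a different argument, and your parity remark on $\alpha^2$ is a worthwhile addition the paper leaves implicit.
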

\begin{proof}
We consider the restriction map $\mathfrak{t} \colon \mathrm{Pic}\, (X) \rightarrow \mathrm{Pic}(\mathfrak{X}/\C(t))$ sending $K_X^{\perp}$ to $\mathrm{Pic}_0(\mathfrak{X}/\C(t))$. Then $\mathfrak{t}({\alpha})$ acts on the curve $\mathfrak{X}$ by translation, and also on the Picard group of $\mathfrak{X}$ by the standard formula
\[
\mathfrak{t}({\alpha})^* (\mathfrak{Z})=\mathfrak{Z}+ \mathrm{deg}\,(\mathfrak{Z})\,  \mathfrak{t}({\alpha}).
\] 
\par \smallskip
Applying this to $\mathfrak{Z}=\mathfrak{t}(D)$ and using the formula $\mathrm{deg}\, \mathfrak{t}(D)=-m\,(D.K_X)$, we get
\[
\mathfrak{t}\left(\mathfrak{\lambda}_{\alpha}^* (D)\right)=\mathfrak{t}(D)-m\, (D.K_X) \, \mathfrak{t}(\alpha).
\]
Hence there exists an integer $n$ such that
\[
\mathfrak{\lambda}_{\alpha}^* (D)=D-m\, (D.K_X)\, \alpha + n\, K_X.
\]
Then
\[
\mathfrak{\lambda}_{\alpha}^* (D)^2=D^2-2m \,(D.K_X)\,(D. \alpha)+m^2\, (D.K_X)^2\,\alpha^2+2n\, (D.K_X).
\]
\par \medskip
We can assume without loss of generality that we have $(D.K_X)\neq 0$ since $\mathrm{Pic}\,(X)$ is spanned by such divisors $D$. Since $\mathfrak{\lambda}_{\alpha}^* (D)^2=D^2$, we get 
\[
n=m\, (D. \alpha) -\frac{m^2}{2} \,(D.K_X) \,\alpha^2.
\]
\end{proof}

\bibliographystyle{plain}
\bibliography{bib}
\end{document}